\theoremstyle{plain}
  \newtheorem{thm}{Theorem}[section]
  \newtheorem{lem}[thm]{Lemma}
  \newtheorem{prop}[thm]{Proposition}
  \newtheorem{cor}[thm]{Corollary} 
  \newtheorem*{thm*}{Theorem A}
  \newtheorem*{thma*}{Theorem C}
  \newtheorem*{thmb*}{Theorem D}
  \newtheorem*{thmc*}{Theorem B}
\theoremstyle{definition}
  \newtheorem{rmk}[thm]{Remark}
  \newtheorem*{ack*}{Acknowledgement}
  \newtheorem*{ques*}{Question}
\theoremstyle{plain}
\numberwithin{equation}{section}
\newcommand\pl{\partial}
\newcommand\dbr{\bar{\partial}}
\newcommand\dbrs{\bar{\partial}^*}
\newcommand\oh{\frac{1}{2}}
\newcommand\dd{{\mathrm d}}
\newcommand\bo{{\mathbf 1}}
\newcommand\CC{\mathcal{C}}
\newcommand\CD{\mathcal{D}}
\newcommand\CV{\mathcal{V}}
\newcommand\CE{\mathcal{E}}
\newcommand\CO{\mathcal{O}}
\newcommand\CK{\mathcal{K}}
\newcommand\CJ{\mathcal{J}}
\newcommand\CN{\mathcal{N}}
\newcommand\BC{\mathbb{C}}
\newcommand\BR{\mathbb{R}}
\newcommand\BN{\mathbb{N}}
\newcommand\BS{\mathbb{S}}
\newcommand\BI{\mathbb{I}}
\newcommand\BA{\mathbb{A}}
\newcommand\BF{\mathbb{F}}
\newcommand\BZ{\mathbb{Z}}
\newcommand\bx{\mathbf{x}}
\newcommand\rb{{\bf r}}
\newcommand\fe{\mathfrak{e}}
\newcommand\fb{\mathfrak{b}}
\newcommand\ff{\mathfrak{f}}
\newcommand\fg{\mathfrak{g}}
\newcommand\fh{\mathfrak{h}}
\newcommand\fD{\mathfrak{D}}
\newcommand\fL{\mathfrak{L}}
\newcommand\fR{\mathfrak{R}}
\newcommand\sr{\mathfrak{r}}
\newcommand\st{\mathfrak{t}}
\newcommand\mr{\mathring}
\newcommand\mb[1]{\overset{\centerdot}{#1}}
\newcommand\mbr{\breve}
\DeclareMathOperator{\sfa}{\mathsf f}
\DeclareMathOperator{\cl}{cl}
\DeclareMathOperator{\pr}{pr}
\DeclareMathOperator{\ev}{ev}
\DeclareMathOperator{\re}{Re}
\DeclareMathOperator{\spn}{span}
\DeclareMathOperator{\dist}{dist}
\DeclareMathOperator{\Hom}{Hom}
\DeclareMathOperator{\End}{End}
\DeclareMathOperator{\Dom}{Dom}
\begin{document}

\title[Dirac Spectral Flow on Contact $3$-Manifolds I]{Dirac Spectral Flow on Contact Three Manifolds I:\\
Eigensection Estimates and Spectral Asymmetry}

\author[C.-J. Tsai]{Chung-Jun Tsai}
\address{Department of Mathematics\\ and National Center for Theoretical Sciences (Mathematics Division, Taipei Office)\\
National Taiwan University\\ Taipei 10617\\ Taiwan}
\email{cjtsai@ntu.edu.tw}

\date{\usdate{\today}}
%\date{XXX XX, 20XX}

\maketitle

%%%%%%%%%%%%%%%%%%%%%%%%%%%%%%%%%%%%
\begin{abstract}
Let $Y$ be a compact, oriented $3$-manifold with a contact form $a$ and a metric $\dd s^2$.  Suppose that ${ F}\to Y$ is a principal bundle with structure group ${\rm U}(2) = {\rm SU}(2)\times_{\scriptscriptstyle\{\pm1\}}S^1$ such that ${ F}/S^1$ is the principal ${\rm SO}(3)$ bundle of orthonormal frames for $TY$.  A unitary connection $A_0$ on the Hermitian line bundle ${ F}\times_{\scriptscriptstyle\det{\rm U}(2)}\BC$ determines a self-adjoint Dirac operator $\CD_0$ on the $\BC^2$-bundle ${ F}\times_{\scriptscriptstyle{\rm U}(2)}\BC^2$.

The contact form $a$ can be used to perturb the connection $A_0$ by $A_0 - ira$.  This associates a one parameter family of Dirac operators $\CD_r$ for $r\geq0$.  When $r>\!>1$, we establish a sharp sup-norm estimate on the eigensections of $\CD_r$ with small eigenvalues.  %The proof relies on an energy estimate over a transverse disk, and the similarity between $\CD_r$ and the $2$-dimensional Cauchy--Riemann operator.
The sup-norm estimate can be applied to study the asymptotic behavior of the spectral flow from $\CD_0$ to $\CD_r$.  In particular, it implies that the subleading order term of the spectral flow is strictly smaller than ${\CO}(r^{\frac{3}{2}})$.  We also relate the $\eta$-invariant of $\CD_r$ to certain spectral asymmetry function involving only the small eigenvalues of $\CD_r$.
\end{abstract}

%\tableofcontents

%%%%%%%%%%%%%%%%%%%%%%%%%%%%%%%%%%%%
\section{Introduction}

In Taubes's proof of the Weinstein conjecture \cite{ref_Taubes_SW_Weinstein}, a key ingredient is the spectral flow estimate for a one parameter family of Dirac operators.  The spectral flow estimate has a natural generalization \cite{ref_Taubes_sf} to any odd dimensional manifolds.  Although being used to prove the Weinstein conjecture, the spectral flow estimate is established in a general setting.  When the one parameter family of Dirac operators is constructed from a contact form, it is interesting to see how its spectral flow function and the zero eigensections are related to the geometry of the contact form.  This paper is the first step toward the study of this question.

%%%%%%%%%%%%%%%%%%
\subsection{Spin-c Dirac operator in three dimension}\label{subsec_spinc}
Suppose that $Y$ is a compact, oriented $3$-manifold with a Riemannian metric $\dd s^2$.  Let ${ Fr}$ be the principal ${\rm SO}(3)$ bundle of oriented, orthonormal frames.  A \emph{spin-c structure} on $Y$ is an equivalent class of lifting of ${ Fr}$ to a principal ${\rm Spin}^\BC(3) = {\rm U}(2)$ bundle.  In dimension three, the spin-c structures can be constructed explicitly.  Since any compact oriented $3$-manifold is parallelizable, ${ Fr}$ can be identified with $Y\times{\rm SO}(3)$.  It suggests an obvious spin-c structure, the trivial ${\rm U}(2)$ bundle ${ F} = Y\times{\rm U}(2)$.  Let $U\to Y$ be a principal $S^1$ bundle.  The principal bundle $F\times_{S^1}U$ is a different spin-c structure if $U$ is non-trivial, where $S^1$ acts on ${\rm U}(2)$ as its center.  This construction identifies the set of spin-c structures on $Y$ with the set of equivalent classes of principal $S^1$ bundles.  Note that the equivalent classes of $S^1$ bundles is an affine space isomorphic to ${\rm H}^2(Y;\BZ)$.

Let $\BS$ be the associated bundle of $F$ by the fundamental representation of ${\rm U}(2)$ on $\BC^2$.  It is called the \emph{spinor bundle}.  The Levi-Civita connection on $Fr$ and a unitary connection $A$ on $\det(\BS) = U\times_{S^1}\BC$ together induce a unitary connection on $\BS$.  Denote the connection by $\nabla_{A}$.

The tangent bundle $TY$ admits an action on $\BS$ defined as follows.  Identify $\BR^3$ with $2\times2$ skew Hermitian matrices.  The group ${\rm U}(2)$ acts on $\BR^3$ by ${\bf x}\mapsto g{\bf x}g^*$ for any ${\bf x}\in\BR^3$ and $g\in{\rm U}(2)$.  The associated bundle of $F$ of this representation is exactly the tangent bundle $TY$.  The matrix action of a $2\times2$ skew Hermitian matrix on $\BC^2$ induces a bundle map $\cl:TY\times\BS\to\BS$.  This map is called the Clifford action.  The Dirac operator $D_A$ associated to $\nabla_{A}$ is the composition of the following maps
\begin{align*}
\CC^\infty(Y;\BS) \stackrel{\nabla_{A}}{\longrightarrow} \CC^\infty(Y;T^*Y\otimes\BS) \stackrel{\text{metric dual}}{\longrightarrow} \CC^\infty(Y;TY\otimes\BS) \stackrel{\cl}{\longrightarrow} \CC^\infty(Y;\BS) ~.
\end{align*}
The Dirac operator is self-adjoint with respect to the $L^2$-inner product.  It has discrete spectrum and each eigenvalue has finite multiplicity.  Moreover, its eigenvalues is unbounded from above  and below.

There are two different conventions for the Clifford action.  The convention in this paper is determined by what follows:  suppose that $\{e_1, e_2, e_3\}$ is an oriented, orthonormal basis of tangent vectors, then $\cl(e_1)\cl(e_2) = -\cl(e_3)$.

%%%%%%%%%%%%%%%%%%
\subsection{Dirac spectral flow}
Suppose that $\BS$ is a spinor bundle.  Let $A_0$ and $A_1$ be unitary connections on $\det(\BS)$.  Choose a path of unitary connections $\{A_s\}_{s\in[0,1]}$ on $\det(\BS)$ which starts at $A_0$ and ends at $A_1$.  This path associates a path of Dirac operators from $D_{A_0}$ to $D_{A_1}$.  The \emph{Dirac spectral flow} is the algebraic count of the zero crossings of eigenvalues: a zero crossing contributes to the count with $+1$ if the eigenvalue crosses zero from a negative to a positive value as $s$ increases, and count with $-1$ if the eigenvalue crosses zero from a positive to a negative value as $s$ increases.  For a generic choice of the path $\{A_s\}_{s\in[0,1]}$, only these two cases occur.  This algebraic count is the Dirac spectral flow.  The complete definition of the spectral flow can be found in \cite[\S7]{ref_APS3} and \cite[\S5.1]{ref_Taubes_SW_Weinstein}.

Atiyah, Patodi and Singer \cite[p.95]{ref_APS3} observed that the spectral flow function is equal to the index of certain Dirac operator on $[0,1]\times Y$ with appropriate boundary conditions.  They also proved that this index \cite[(4.3)]{ref_APS1} is path independent \cite[p.89]{ref_APS3}.  Therefore, the spectral flow function depends only on the ordered pair $({A_0},{A_1})$, but not on the path $\{A_s\}_{s\in[0,1]}$.

\smallskip

Given a real-valued $1$-form $a$, we can consider the spectral flow from $A_0$ to $A_0 - {ir}a$ for any $r\geq1$.  The spectral flow can be thought as a function of $r$, which we denote by $\sfa_a(A_0,r)$.  In \cite[\S5]{ref_Taubes_SW_Weinstein} and \cite{ref_Taubes_sf}, Taubes studied the asymptotic behavior of $\sfa_a(A_0,r)$ as $r\to\infty$.  He proved:

\begin{thm*} (\cite[Proposition 5.5]{ref_Taubes_SW_Weinstein})
There exist a universal constant $\delta\in(0,\oh)$ and a constant $c_1$ determined by $\dd s^2$ and $A_0$ such that
\begin{align*}
\big|\sfa_a(A_0,r)-\frac{r^2}{32\pi^2}\int_Y a\wedge\dd a\big| \leq c_1r^{\frac{3}{2}+\delta}
\end{align*}
for any real-valued $1$-form $a$ with $||a||_{\mathcal{C}^3}\leq 1$ and any $r\geq c_1$.
\end{thm*}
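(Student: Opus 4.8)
The plan is to route the estimate through the Atiyah--Patodi--Singer description of the spectral flow: one reads the leading term off a Chern--Weil integral and pushes everything else into a bound on the $\eta$-invariant of $\CD_r$. First I would realize $\sfa_a(A_0,r)$ as the index of the suspended Dirac operator $\pl_s+\CD_{sr}$ on the cylinder $[0,1]\times Y$ (with $s\in[0,1]$ and interpolating connection $A_0-isra$) under Atiyah--Patodi--Singer boundary conditions at the two ends, the possible non-invertibility of $\CD_0$ and $\CD_r$ being carried by the $\dim\ker$ terms below; see \cite{ref_APS1,ref_APS3}. The APS index theorem then equates this index with a local Chern--Weil integral over $[0,1]\times Y$, corrected by the reduced $\eta$-invariants $\tfrac12(\eta(\CD_0)+\dim\ker\CD_0)$ and $\tfrac12(\eta(\CD_r)+\dim\ker\CD_r)$ at the two ends. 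The $s=0$ end contributes a constant depending only on $\dd s^2$ and $A_0$, so the theorem reduces to (a) extracting the leading term of the local integral and (b) proving $|\eta(\CD_r)|+\dim\ker\CD_r\le c_1 r^{\frac32+\delta}$.

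For (a) I would use the spin-c index density, whose top-degree part has leading piece $\tfrac18 c_1(\det\BS)^2=-\tfrac1{32\pi^2}F_{\BA}\wedge F_{\BA}$, where $\BA$ denotes the connection $A_0-isra$ on the pullback of $\det\BS$ to $[0,1]\times Y$, so that $F_{\BA}=F_{A_0}-isr\,\dd a-ir\,\dd s\wedge a$. The only $r^2$-contribution to $F_{\BA}\wedge F_{\BA}$ is the cross term $2(-isr\,\dd a)\wedge(-ir\,\dd s\wedge a)$, proportional to $s\,r^2\,\dd s\wedge a\wedge\dd a$; integrating $s$ over $[0,1]$ and collecting the universal constants produces exactly $\tfrac{r^2}{32\pi^2}\int_Y a\wedge\dd a$, while the remaining terms (cross terms involving $F_{A_0}$ and the contributions from $\hat A(X)$) are $\CO(r)$, hence harmlessly below $r^{\frac32+\delta}$.

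For (b) the engine is the Weitzenb\"ock formula $\CD_r^2=\nabla^*\nabla+r^2|a|^2+r\,\fR_1+\fR_0$, where $\fR_1,\fR_0$ are bundle endomorphisms with $\dd s^2,A_0$-controlled norms. Since $\CD_r^2\ge0$ carries the large positive potential $r^2|a|^2$, an Agmon-type estimate forces any eigensection with $|\lambda|$ bounded to concentrate exponentially in the shrinking neighborhood $\{|a|\lesssim r^{-1/2}\}$ of the zero set $\{a=0\}$; localizing there and comparing with a model harmonic-oscillator-type operator bounds the number $N(\CD_r,[-\Lambda,\Lambda])$ of eigenvalues in a fixed window (crossings are at worst polynomially many also for a cruder reason, since over a unit $r'$-subinterval each eigenvalue moves by at most $\|a\|_{\CC^0}\le1$ as $\tfrac{\dd}{\dd r'}\CD_{r'}=-i\cl(a^\sharp)$, so crossings in $[k,k+1]$ number at most $N(\CD_k,[-2,2])$). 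Writing $\eta(\CD_r)=\tfrac1{\sqrt\pi}\int_0^\infty t^{-1/2}\operatorname{Tr}(\CD_r e^{-t\CD_r^2})\,\dd t$ and splitting the integral at a threshold $t_0=t_0(r)$, the large-$t$ part is governed by those small eigenvalues, while the small-$t$ part is handled by the short-time expansion of $\operatorname{Tr}(\CD_r e^{-t\CD_r^2})(x,x)$ — which has no singular terms, the relevant local densities vanishing in odd dimensions — together with the Gaussian damping $e^{-tr^2|a|^2}$ away from $\{a=0\}$. Feeding the resulting bounds back into the APS identity of the first step yields the stated estimate.

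The heart of the matter is this $\eta$-bound, and it is where the real work lies: it must hold \emph{uniformly} over all $1$-forms $a$ with $\|a\|_{\CC^3}\le1$, including degenerate ones whose zero set is a surface or a curve rather than a finite set of points, so genericity of $\{a=0\}$ is unavailable; moreover the lower-order terms $r\,\fR_1+\fR_0$ in the Weitzenb\"ock formula are themselves of size $r$, so controlling $\operatorname{Tr}(\CD_r e^{-t\CD_r^2})$ for small $t$ demands delicate $r$-dependent heat-kernel and resolvent estimates. The exponent $\tfrac32+\delta$ reflects precisely the strength of the bounds obtainable this way; improving it — one of the aims of the present paper — is exactly what the sharp sup-norm estimates on small-eigenvalue eigensections established here are for. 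Compare \cite{ref_Taubes_sf} for the general odd-dimensional form of this argument.
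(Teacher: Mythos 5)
Your step (a) is fine and amounts to the reduction the paper itself records as (\ref{eqn_eta_01}): the only $r^2$-term in $F_{\BA}\wedge F_{\BA}$ is the cross term $\propto s\,r^2\,\dd s\wedge a\wedge\dd a$, and all other index-density contributions are $\CO(r)$. The genuine gap is in step (b), which is the entire content of the theorem, and the mechanism you propose for it fails. There is no Weitzenb\"ock identity $\CD_r^2=\nabla^*\nabla+r^2|a|^2+r\,\fR_1+\fR_0$ with a \emph{fixed} connection $\nabla$ and bundle endomorphisms $\fR_i$: since $\CD_r=\CD_0-\tfrac{i}{2}r\cl(a)$, squaring gives $\CD_r^2=\CD_0^2+\tfrac{r^2}{4}|a|^2-\tfrac{ir}{2}\{\CD_0,\cl(a)\}$, and the anticommutator is a \emph{first-order} operator of size $r$ (it contains $-2\nabla_{a^\sharp}$), not an endomorphism. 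For sections oscillating at frequency about $r/2$ along $a^\sharp$ this cross term cancels the would-be confining potential, so the Agmon-type claim that small-eigenvalue eigensections concentrate exponentially in $\{|a|\lesssim r^{-\oh}\}$ is false. Indeed, for a contact form $\{a=0\}$ is empty, yet the small-eigenvalue eigenspace has dimension of order $r$ (compare (\ref{eqn_dim_est}) and Corollary \ref{cor_main_thm_00}, and it must be at least of order $r$ because the spectral flow grows like $r^2$ while eigenvalue speeds are bounded by $\oh\sup|a|$); for a general $1$-form the correct bound is of order $r^{3/2}$ \cite{ref_Taubes_sf}. For the same reason the heat kernel of $\CD_r^2$ carries no pointwise damping $e^{-tr^2|a|^2}$ away from $\{a=0\}$: the $r$-dependence sits in the connection and in the zeroth-order term $-ir\cl(\dd a/2)$ of the true Weitzenb\"ock formula (\ref{eqn_weitzenbock_00}), and diamagnetic-type comparisons only give growth like $e^{ctr}$. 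With concentration and damping gone, your small-$t$ and large-$t$ estimates for $\operatorname{Tr}(\CD_re^{-t\CD_r^2})$ are unsupported, and the bound $|\eta(\CD_r)|+\dim\ker\CD_r\le c_1r^{\frac32+\delta}$ --- which via APS is equivalent to the theorem --- is not established.

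Note also that the paper does not prove this statement; it quotes it from \cite{ref_Taubes_SW_Weinstein}, and the known proof runs in the opposite logical direction from yours: one first bounds, uniformly in $s\le r$ and in $a$ with $\|a\|_{\CC^3}\le1$, the number of eigenvalues of $\CD_s$ in a fixed window by $r$-dependent parametrix/heat-kernel estimates (Propositions 2.1--2.2 of \cite{ref_Taubes_sf}; in the contact case, Theorem \ref{thm_point_estimate_00} and its corollaries here), then counts zero crossings directly using the bounded eigenvalue speed, and only afterwards reads off the asymptotics of $\eta(A_0-ira)$ from (\ref{eqn_eta_01}). Your crossing-count remark is in exactly this spirit, but the eigenvalue-counting input is precisely what your Agmon argument was supposed to supply and does not; replacing it requires the hard $r$-uniform heat-kernel analysis, which is where the exponent $\frac32+\delta$ actually comes from.
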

This theorem specifies the leading order term of the spectral flow function, and gives a bound on the subleading order term.

%%%%%%%%%%%%%%%%%%
\subsection{Spectral flow on contact three manifolds}
A \emph{contact form} $a$ on an oriented three manifold is a $1$-form such that $a\wedge\dd a>0$.  An \emph{adapted metric} on a contact three manifold is a Riemannian metric so that $|a| = 1$ and $\dd a = 2*a$, where $*$ is the Hodge star operator.  Chern and Hamilton \cite{ref_CH} proved that such a metric always exists.

Suppose that $(Y,a)$ is a contact three manifold, and $\dd s^2$ is an adapted metric.  Suppose that $D_{A_0}$ is a spin-c Dirac operator on $Y$.  The zero eigensections of the Dirac operator $D_{A_0-ira}$ have the following properties when $r>\!>1$.
\begin{itemize}
\item The \emph{Reeb vector field} is the unique vector field $v$ such that $\dd a(v,\cdot) = 0$ and $a(v) = 1$.  The covariant derivative of the zero eigensection along $v$ is close to the multiplication by $ir/2$.  Thus, its magnitude does not change much along the Reeb vector field $v$.
\item The \emph{contact hyperplane} (or the \emph{contact structure}) is the two dimensional distribution in $TY$ defined by $\ker(a)$.  On the contact hyperplanes, the zero eigensections almost satisfy a Cauchy--Riemann equation.
\end{itemize}
The precise statements will appear in \S\ref{sec_pt_est}.  These properties suggest that instead of the Riemannian geometry in three dimension, the scenery here is more like the complex geometry in one dimension.  It motivates the following questions.
\begin{ques*}
Suppose that $(Y,a)$ is a contact three manifold with an adapted metric $\dd s^2$.
\begin{enumerate}
\item Is the subleading order term of $\sfa_a(A_0,r)$ of order $r$ instead of order $r^{\frac{3}{2}+\delta}$?  If this being the case, what is the coefficient of the subleading order term, and  what is its geometric meaning?
\item What is the relation between the zero locus of the zero eigensection of $D_{A_0-ira}$ and the behavior of the Reeb vector field as $r\to\infty$?
\end{enumerate}
\end{ques*}

%%%%%%%%%%%%%%%%%%
\subsection{Main results}
The main result of this paper is that the subleading order term of the spectral flow function is of ${\scriptscriptstyle \CO}(r^{\frac{3}{2}})$.  It sort of suggests that the answer to Question (i) is affirmative.

\begin{thmc*} [{Theorem \ref{cor_sf_402}(ii)}]
Suppose that $(Y,a)$ is a contact three manifold with an adapted metric $\dd s^2$.  Suppose that $D_{A_0}$ is a spin-c Dirac operator.  Then, there exists a constant $c_2$ determined by $a$, $\dd s^2$ and $A_0$ such that
\begin{align*}
\big| \sfa_a(A_0,r) - \frac{r^2}{32\pi^2}\int_Y a\wedge\dd a \big| &\leq
c_2 r^{\frac{3}{2}}(\log r)^{-\oh} \qquad\text{ for any } r\geq c_2 ~.
\end{align*}
\end{thmc*}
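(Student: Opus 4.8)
The plan is to reduce the spectral-flow asymptotics to a bound on the $\eta$-invariant of $\CD_r:=D_{A_0-ira}$, and then to bound $\eta(\CD_r)$ with the sharp sup-norm estimate for small eigensections established in \S\ref{sec_pt_est}. (A direct argument that partitions $[0,r]$ and compares $\sfa_a$ on each piece with a model count is an alternative, but it needs the same input, so I would take the $\eta$-route, which also yields the spectral-asymmetry identity of Theorem \ref{cor_sf_402}(i).)

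\emph{Reduction to $\eta(\CD_r)$.} Regard $s\mapsto A_0-isa$, $s\in[0,r]$, as a $\spinc$ connection on $[0,r]\times Y$ and let $\pl_s+\CD_s$ be the associated Dirac-type operator with Atiyah--Patodi--Singer boundary conditions at the two ends, so that $\sfa_a(A_0,r)$ equals its index. The APS index theorem writes this index as the integral over $[0,r]\times Y$ of the local index density plus $\oh$ times the difference of the reduced $\eta$-invariants at the ends. The curvature of $A_0-isa$ is $F_{A_0}-is\,\dd a-i\,\dd s\wedge a$, so the connection-dependent part of the local density is a polynomial of degree $\le2$ in $s$; its $s$-integral produces exactly $\tfrac{r^2}{32\pi^2}\int_Y a\wedge\dd a$ plus a term linear in $r$, while the $\hat A$-genus terms contribute another $\CO(r)$. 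Since $\eta(\CD_0)$ is a fixed constant and $\dim\ker\CD_r$ is at most the number of eigenvalues of $\CD_r$ in $[-1,1]$, which the eigensection estimates bound by $\CO(r)$, this yields
\begin{align*}
\Big|\sfa_a(A_0,r)-\tfrac{r^2}{32\pi^2}\int_Y a\wedge\dd a\Big|\ \le\ \oh\,\big|\eta(\CD_r)\big|+c\,r+c .
\end{align*}
It therefore suffices to prove $\big|\eta(\CD_r)\big|\le c\,r^{3/2}(\log r)^{-\oh}$.

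\emph{Bounding $\eta(\CD_r)$.} Write $\eta(\CD_r)=\pi^{-1/2}\int_0^\infty t^{-1/2}\,\mathrm{Tr}\big(\CD_r e^{-t\CD_r^2}\big)\,\dd t$ and split the spectrum at a threshold $\Lambda=\Lambda(r)$. For the part with $|\lambda|\le\Lambda$ one has $\big|\sum_{0<|\lambda|\le\Lambda}\mathrm{sign}(\lambda)\big|\le N_r(\Lambda)$, the number of eigenvalues in $[-\Lambda,\Lambda]$; the sup-norm estimate of \S\ref{sec_pt_est}, which controls the spectral density of $\CD_r$ near $0$, gives $N_r(\Lambda)\le c\,r(\Lambda+1)$ uniformly for $\Lambda$ up to a fixed power of $r$. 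For the part with $|\lambda|>\Lambda$, the Weitzenb\"ock identity $\CD_r^2=\nabla_r^*\nabla_r+r\,\cl(\dd a)+\CO(1)$ together with $\dd a=2*a$ furnishes a spectral gap which, combined with the small-time expansion of $\mathrm{Tr}(\CD_r e^{-t\CD_r^2})$ (whose $r$-dependent coefficients are polynomial in $r$) and the cancellation in that trace over the roughly symmetric bulk of the spectrum, bounds the remaining contribution to $\eta(\CD_r)$ by $c\big(\Lambda^3+r\Lambda+r^{3/2}\big)$. Balancing $c\,r\Lambda+c\,\Lambda^3+c\,r^{3/2}$, the optimal choice is $\Lambda\asymp r^{1/2}(\log r)^{-\oh}$ — the logarithm appearing because this is the energy scale at which the small-time heat remainder ceases to dominate the eigenvalue count — and then both contributions are $\CO\big(r^{3/2}(\log r)^{-\oh}\big)$.

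\emph{Main obstacle.} The APS bookkeeping and the heat-kernel bounds in the range $|\lambda|>\Lambda$ are essentially standard; the delicate point is the sharp count $N_r(\Lambda)\le c\,r(\Lambda+1)$, with no lossy power of $r$. Taubes's argument loses a factor $r^{\delta}$ exactly at this step, and only the precise sup-norm/concentration estimate of \S\ref{sec_pt_est} — which pins down that a small eigensection concentrates at the contact scale $r^{-1/2}$ with essentially the model harmonic-oscillator profile — is strong enough to make the two error terms balance at $r^{3/2}(\log r)^{-\oh}$ rather than at $r^{3/2+\delta}$. A secondary subtlety is that one must genuinely use the cancellation $\mathrm{Tr}(\CD_r e^{-t\CD_r^2})=\sum_\lambda\lambda\,e^{-t\lambda^2}$, not merely the crude count, to keep the small-time contribution at $\CO(r^{3/2})$ and below.
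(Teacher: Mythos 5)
Your reduction step is fine: by the APS formula (equation (1.2) of the paper) together with $h(A_0-ira)\leq c\,r$ from the dimension estimate, the theorem is indeed equivalent to the bound $|\eta(D_{A_0-ira})|\leq c\,r^{3/2}(\log r)^{-1/2}$ up to $\CO(r)$ errors, and your count $N_r(\Lambda)\leq c\,r(\Lambda+1)$ for $1\leq\Lambda\leq\tfrac12 r^{1/2}$ is available from Theorem \ref{thm_point_estimate_00} and Corollary \ref{cor_main_thm_00}. The gap is in the second half. The bound you assert for the contribution of eigenvalues with $|\lambda|>\Lambda$, namely $c(\Lambda^3+r\Lambda+r^{3/2})$, is not proved: you invoke ``the small-time expansion of $\mathrm{Tr}(\CD_r e^{-t\CD_r^2})$'' and ``cancellation over the roughly symmetric bulk,'' but the whole difficulty is to control the \emph{odd} heat trace with a remainder that is uniform in $r$ for $t$ down to scales like $r^{-1}\log r$; the eta invariant is precisely the asymmetry you are trying to bound, so no cancellation can be taken for granted there. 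The paper never estimates the odd heat trace at fixed $r$ at all: it converts the problem, via the slope formula $\lambda_j'=\tfrac12\int_Y(|\alpha_j|^2-|\beta_j|^2)$ and the $\Psi$-displacement, into the even trace on the $E$-component, $\sum_j e^{-\lambda_j^2 t}|\alpha_j(q)|^2$, whose expansion with explicit $r$-uniform error is Theorem \ref{thm_heat_00}; the logarithm enters through the choice $t(r)=20r^{-1}\log r$, balanced against the $r(\log r)^{9/2}$ expansion error, and the truncated asymmetry $\dot\eta$ is then bounded by $r^{3/2}(\log r)^{-1/2}$ using the density estimate of Corollary \ref{cor_eigen_distribution}. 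None of this machinery is replaced by an argument in your sketch.

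Even taken at face value, your bounds do not yield the theorem: with an additive $r^{3/2}$ term in the large-eigenvalue estimate, no choice of $\Lambda$ gives anything better than $\CO(r^{3/2})$, so the claimed ``balancing'' at $\Lambda\asymp r^{1/2}(\log r)^{-1/2}$ is incoherent and the $(\log r)^{-1/2}$ gain --- which is the entire content of the theorem beyond Taubes's $r^{3/2+\delta}$ bound --- never appears. (If instead the large part were $\CO(\Lambda^3+r\Lambda)$ you could take $\Lambda$ bounded and would get $\CO(r)$, which is far stronger than anything the paper proves; that alone signals the estimate cannot be right as stated.) To repair the argument along your lines you would need either a genuine $r$-uniform parametrix analysis of the odd heat trace, or the paper's detour through the $r$-derivative of the eigenvalue curves; as written, the key analytic step is missing.
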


There are two main ingredients in the proof of Theorem B.  The following theorem is the first ingredient.  It investigates the eigensections of $D_{A_0-ira}$ with small eigenvalues.

\begin{thma*} [Theorem \ref{thm_point_estimate_00}]
Suppose that $(Y,a)$ is a contact three manifold with an adapted metric $\dd s^2$.  Suppose that $D_{A_0}$ is a spin-c Dirac operator.  For any positive $r$ and $\lambda$, let
$$ \CV(r,\lambda) = \spn\big\{\psi\in\CC^\infty(Y;\BS) ~\big|~ D_{A_0-ira}\psi=\nu\psi, \text{ for some scalar } \nu\text{ with }|\nu|\leq\lambda \big\} ~.$$
Then, there exists a constant $c_3$ determined by $a$, $\dd s^2$ and $A_0$ such that
\begin{align*}
\sup_Y |\psi|^2 &\leq c_3r\lambda\int_Y|\psi|^2
\end{align*}
for any $r\geq c_3$, $1\leq\lambda\leq\oh r^\oh$ and $\psi\in\CV(r,\lambda)$.
\end{thma*}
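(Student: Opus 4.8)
The strategy is the standard Bochner–Weitzenböck plus local model approach familiar from Taubes's work, adapted to extract the sharp constant $c_3 r\lambda$. First I would establish the Weitzenböck formula for $\CD_r = D_{A_0 - ira}$: one has $\CD_r^2 = \nabla_r^*\nabla_r + \frac{s}{4} + \cl(\ast(F_{A_0} - irda)) $, where $F_{A_0}$ is the curvature of $A_0$ and $s$ is the scalar curvature. Using the adapted-metric identity $da = 2\ast a$, the term $-ir\cl(\ast da) = -2ir\,\cl(a)$ supplies, on the appropriate splitting $\BS = E \oplus (E\otimes K^{-1})$ into the $\pm i$-eigenbundles of $\cl(a)$ (equivalently $\cl(v)$ for the Reeb field $v$), a potential term of size $\sim r$ with a definite sign on one summand and the opposite sign on the other. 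Testing the eigenvalue equation $\CD_r\psi = \nu\psi$ with $\psi$ and integrating, the bad summand is controlled because $\lambda$ is small: for $\psi\in\CV(r,\lambda)$ one gets $\int_Y|\nabla_r\psi|^2 \lesssim (r + \lambda^2)\int_Y|\psi|^2 \lesssim r\int_Y|\psi|^2$ using $\lambda \leq \tfrac12 r^{1/2}$, and moreover the $L^2$-mass of the "wrong-sign" component $\beta$ of $\psi$ is bounded by $\tfrac{c\lambda}{r}\int_Y|\psi|^2$ — this small-mass fact is exactly what the $\lambda$ on the right-hand side will track.

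The heart of the argument is a local sup-norm bound via a rescaled De Giorgi–Nash–Moser / mean-value inequality. Fix $p\in Y$ and work in a Darboux-type coordinate chart adapted to the contact structure, then rescale coordinates by $r^{1/2}$ (the natural length scale set by the potential $\sim r$). In the rescaled picture $\CD_r^2$ becomes, to leading order, a model operator on $\BC^3$ — essentially the Euclidean Laplacian plus the harmonic-oscillator-type potential coming from $\cl(a)$ and the $da$-curvature — with curvature and metric errors of relative size $r^{-1/2}$. Applying the standard subsolution estimate to $|\psi|^2$ (which satisfies $\Delta|\psi|^2 \leq c(\,|\nabla_r\psi||\psi|\cdot 0 \,)$ — more precisely $-\tfrac12\Delta|\psi|^2 + \langle(\text{potential})\psi,\psi\rangle \leq |\nu|^2|\psi|^2 + |\nabla_r\psi|^2$, reorganized as a differential inequality for a subsolution), one obtains on a ball of rescaled radius $O(1)$, i.e. actual radius $O(r^{-1/2})$, the bound
\begin{align*}
\sup_{B(p, r^{-1/2})}|\psi|^2 \;\lesssim\; r^{3/2}\int_{B(p, 2r^{-1/2})}|\psi|^2 \;+\; \big(\text{contribution of }\beta\big),
\end{align*}
where the $r^{3/2}$ is the inverse volume of the small ball. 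The crude bound $r^{3/2}\int_Y|\psi|^2$ is off by a factor $r^{1/2}/\lambda$; to improve it one must exploit that the relevant component carrying the mass is $\beta$, which has total $L^2$-norm only $\tfrac{c\lambda}{r}\int_Y|\psi|^2$, while the dominant component $\alpha$ satisfies an approximate Cauchy–Riemann equation along the contact planes and is nearly covariantly constant along $v$ (the two bulleted properties in the introduction). Integrating the near-CR equation over a rescaled-unit-length Reeb segment and using the contact-plane ellipticity converts the pointwise value of $|\alpha|^2$ at $p$ into an average of $|\psi|^2$ over a region of volume $\sim r^{-1}$ (not $r^{-3/2}$: one saves the $r^{-1/2}$ in the Reeb direction because $|\psi|$ varies slowly there). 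This yields $\sup_Y|\psi|^2 \lesssim r\int_Y|\psi|^2$ from the $\alpha$-part, and the $\beta$-part contributes $\lesssim r^{3/2}\cdot\tfrac{\lambda}{r}\int_Y|\psi|^2 = r^{1/2}\lambda\int_Y|\psi|^2$, both of which are $\leq c_3 r\lambda\int_Y|\psi|^2$.

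I expect the main obstacle to be making the "save an $r^{-1/2}$ along the Reeb direction" step rigorous with a uniform constant: one must integrate the approximate equations $\nabla_{r,v}\psi \approx \tfrac{ir}{2}\psi$ (rescaled: $\psi$ nearly constant) and the contact Cauchy–Riemann system together, controlling the error terms — lower-order curvature of $A_0$, torsion/Nijenhuis terms of the contact structure, and the coupling to $\beta$ — uniformly in $p$ and in $r\geq c_3$, and tracking how they degrade the mean-value inequality. Concretely this means proving a parametrix or Agmon-type exponential-decay estimate for the rescaled model operator to localize to the $O(r^{-1/2})\times O(r^{-1/2})\times O(1)$ "cylinder" around $p$, then a Moser iteration on that anisotropic domain. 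The remaining ingredients — the Weitzenböck identity, the $L^2$ bound on $\nabla_r\psi$, and the smallness of $\|\beta\|_{L^2}$ — are routine consequences of integrating the eigenvalue equation against $\psi$ and against $\cl(a)\psi$ and using $\lambda\leq\tfrac12 r^{1/2}$; the subtlety is entirely in the anisotropic, $r$-dependent elliptic estimate and in verifying that the constant depends only on $a$, $\dd s^2$, $A_0$.
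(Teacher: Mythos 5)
Your overall architecture (splitting $\BS=E\oplus EK^{-1}$, Weitzenb\"ock formula, smallness of $\beta$, anisotropic localization at transverse scale $r^{-\oh}$ and unit scale along the Reeb direction, near-CR equation for $\alpha$) matches the paper's, but the step that produces the sharp factor $\lambda$ is wrong. You claim that integrating the near-CR equation and the near-constancy along $v$ over a unit Reeb segment yields $\sup_Y|\alpha|^2\lesssim r\int_Y|\psi|^2$, with the $\lambda$ coming only from the $\beta$-contribution. That bound cannot hold for general $\psi\in\CV(r,\lambda)$: by the evaluation-functional argument (the one used in Corollary \ref{cor_main_thm_00}), a uniform bound $\sup_Y|\psi|^2\leq cr\int_Y|\psi|^2$ on $\CV(r,\lambda)$ would force $\dim\CV(r,\lambda)\leq c'r$ independently of $\lambda$, whereas the eigenvalue density of $D_r$ near zero is of order $r$ per unit interval (this is what makes the spectral flow $\sim r^2$), so $\dim\CV(r,\lambda)\sim r\lambda$. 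The source of the error is that elements of $\CV(r,\lambda)$ are linear combinations of eigensections, so the only available control is $\int_Y|D_r\psi|^2\leq\lambda^2\int_Y|\psi|^2$; the "near-constancy along the Reeb direction" of slice quantities therefore holds only up to an error of size $\lambda\int_Y|\psi|^2$, and this is exactly where the factor $\lambda$ in the theorem originates. In the paper this is Proposition \ref{prop_estimate_01}: the $z$-derivative of the transverse-slice integral of $|\mr\alpha|^2-|\mr\beta|^2$ is controlled (after using that $\dbr,\dbrs$ are almost adjoint) by $\int_Y(|\dd\tilde\chi_\epsilon||\mr\alpha||\mr\beta|+|\mr\psi||\mr{D}_r\mr\psi|)$, giving a disk mass $\lesssim\lambda\int_Y|\psi|^2$ over a disk of radius $\epsilon=r^{-\oh}$, which then feeds into the Cauchy integral formula to give $\sup_Y|\alpha|^2\lesssim r\lambda\int_Y|\psi|^2$. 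Your accounting, by contrast, tries to extract the $\lambda$ from $\|\beta\|_{L^2}^2\lesssim\frac{\lambda}{r}\int_Y|\psi|^2$, a bound you do not justify; the routine argument (applying the Weitzenb\"ock identity with $\int_Y|D_r^2\psi|^2\leq\lambda^4\int_Y|\psi|^2$, as in Lemma \ref{lem_CV_beta}) gives only $\frac{\lambda^2}{r}$, and in any case the $\beta$-term is not where the improvement from $r^{\frac32}$ to $r\lambda$ comes from.

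A secondary point: the crude mean-value/Moser step is fine (the paper's Lemma \ref{lem_estimate_00} is a Green's-function version giving $r^{\frac32}$), and your instinct that the difficulty is the anisotropic, $r$-dependent estimate is right; but the fix is not an Agmon/parametrix localization of the model operator. What is needed is precisely the quantitative slicewise statement above, together with pointwise bounds on $\beta$ and $\nabla_r\beta$ in terms of $\sup_Y|\alpha|$ (obtained in the paper by maximum-principle arguments applied to combinations like $|\beta|^2+cr^{-1}|\alpha|^2$), so that the error terms in the Cauchy integral over the transverse disk close up at $\epsilon=r^{-\oh}$. Without the $\lambda$-weighted disk estimate, your scheme can only reach $r^{\frac32}\int_Y|\psi|^2$, not $r\lambda\int_Y|\psi|^2$.
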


This theorem implies (Corollary \ref{cor_main_thm_00}(i)) that
\begin{align}\label{eqn_dim_est}   \dim\CV(r,\lambda)\leq c_3r\lambda ~.   \end{align}
It provides another evidence that $D_{A_0-ira}$ behaves more like the complex geometry in one dimension.  If there is no condition on the $1$-form $a$, it is very likely that \cite[Proposition 2.2]{ref_Taubes_sf} is the best statement one can make.  With the help of the heat kernel argument, this dimension estimate (\ref{eqn_dim_est}) leads to the following estimate on the spectral flow function.  It is the second ingredient in the proof of Theorem B.

\begin{thmb*} [{Theorem \ref{cor_sf_402}(i)}]
Suppose that $(Y,a)$ is a contact three manifold with an adapted metric $\dd s^2$.  Suppose that $D_{A_0}$ is a spin-c Dirac operator.  Then, there exists a constant $c_4$ determined by $a$, $\dd s^2$ and $A_0$ such that
\begin{align*}
\big| \sfa_a(A_0,r) - \frac{r^2}{32\pi^2}\int_Y a\wedge\dd a - \dot{\eta}(A_0-ira) \big| &\leq
c_4 r(\log r)^{\frac{9}{2}} 
\end{align*}
for any $r\geq c_4$.  The function $\dot{\eta}(A_0-ira)$ is defined by
$$ \big(\frac{80}{\pi}\big)^\oh r^{-\oh}(\log r)^\oh \Big(\sum_{\psi\in\CV_{r}^+}\int^{\frac{1}{3}r^\oh}_{\lambda_\psi} e^{-20(r^{-1}\log r)u^2}\,\dd u - \sum_{\psi\in\CV_{r}^-}\int_{-\frac{1}{3}r^\oh}^{\lambda_\psi} e^{-20(r^{-1}\log r)u^2}\,\dd u\Big)$$
where $\CV_{r}^+$ consists of orthonormal eigensetions of $D_{A_0-ira}$ whose eigenvalue belongs to $(0,\frac{1}{3}r^\oh)$, $\CV_{r}^-$ consists of orthonormal eigensetions of $D_{A_0-ira}$ whose eigenvalue belongs to $(-\frac{1}{3}r^\oh,0)$, and $\lambda_\psi$ is the corresponding eigenvalue of $\psi$.  (The constants $\frac{1}{3}$ and $20$ are not crucial.  They are just convenient choices.)
\end{thmb*}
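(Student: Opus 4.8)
\emph{Strategy.} Write $\CD_r := D_{A_0-ira}$ and take the linear path $s\mapsto D_{A_0-isra}$, $s\in[0,1]$, from $\CD_0$ to $\CD_r$. The plan is to run the heat-kernel description of the spectral flow. Combining the variational formula for the reduced $\eta$-invariant with the unit jumps of the spectrum at the zero-crossings (cf.\ \cite[\S7]{ref_APS3}) gives
\[
\sfa_a(A_0,r) = \tfrac12\big(\eta(\CD_r)-\eta(\CD_0)\big) + \frac{1}{\sqrt\pi}\int_0^1\Big(\lim_{t\to0^+}\sqrt t\,\operatorname{Tr}\!\big((-ir\cl(a))\,e^{-tD_{A_0-isra}^2}\big)\Big)\,\dd s + \CO(r),
\]
where the $\CO(r)$ absorbs $\tfrac12\dim\ker\CD_r$ (bounded by $c_3 r$ through \eqref{eqn_dim_est}) and the $r$-independent quantities $\eta(\CD_0)$, $\dim\ker\CD_0$. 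The local integrand is read off from the short-time heat expansion: $\operatorname{Tr}\cl(a)=0$ kills the would-be $t^{-3/2}$ term, so $\lim_{t\to0^+}\sqrt t\,\operatorname{Tr}(\cl(a)e^{-tD_{A_0-isra}^2}) = -2(4\pi)^{-3/2}\!\int_Y a\wedge F_{A_0-isra}$, and performing the $s$-integral produces $\frac{r^2}{32\pi^2}\int_Y a\wedge\dd a$ together with a term linear in $r$ coming from $a\wedge F_{A_0}$. Hence $\sfa_a(A_0,r) - \frac{r^2}{32\pi^2}\int_Y a\wedge\dd a = \tfrac12\,\eta(\CD_r) + \CO(r)$, and the theorem reduces to the identity $\eta(\CD_r) = 2\,\dot\eta(A_0-ira) + \CO\big(r(\log r)^{\frac{9}{2}}\big)$.

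\emph{Heat representation and the splitting.} I use $\eta(\CD_r) = \frac{1}{\sqrt\pi}\int_0^\infty t^{-1/2}\operatorname{Tr}(\CD_r e^{-t\CD_r^2})\,\dd t$; the integral is convergent at $t=0$ because the $t^{-3/2}$ and $t^{-1/2}$ coefficients of $\operatorname{Tr}(\CD_r e^{-t\CD_r^2})$ vanish — the first since Clifford matrices are trace-free, the second because additionally $\dd F_{A_0-ira}=0$ by the Bianchi identity, so $\operatorname{Tr}(\CD_r e^{-t\CD_r^2})=\CO(t^{1/2})$ as $t\to0^+$. Put $\tau = 20\,r^{-1}\log r$ and split $\eta(\CD_r)=I_{<}+I_{>}$ with $I_{<}=\frac{1}{\sqrt\pi}\int_0^\tau$ and $I_{>}=\frac{1}{\sqrt\pi}\int_\tau^\infty$. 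Expanding the trace over an $L^2$-orthonormal eigenbasis and substituting $t=\tau u^2/\nu^2$ gives
\[
I_{>} = \Big(\tfrac{80}{\pi}\Big)^{\!\oh}r^{-\oh}(\log r)^{\oh}\sum_{\nu}\operatorname{sgn}(\nu)\int_{|\nu|}^\infty e^{-\tau u^2}\,\dd u,
\]
the prefactor being exactly $\frac{2\sqrt\tau}{\sqrt\pi}$. Restricting the sum to $|\nu|<\tfrac13 r^{\oh}$ changes it by $\CO(r^{\oh})$: indeed $\int_\Lambda^\infty e^{-\tau u^2}\dd u\le (2\tau\Lambda)^{-1}e^{-\tau\Lambda^2}$, $e^{-\tau r/9}=r^{-20/9}$, and $\sum_{|\nu|\ge\frac13 r^{1/2}}e^{-\tau\nu^2}\le r^{-10/9}\operatorname{Tr}(e^{-\tfrac\tau2\CD_r^2})$ with the last trace $\CO(r^{3/2})$ by \eqref{eqn_dim_est} and the heat-trace bound of \cite[Proposition 2.2]{ref_Taubes_sf}; replacing the upper limit $\infty$ by $\tfrac13 r^{\oh}$ costs a further $\CO(r^{-1})$. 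Thus $I_{>} = \dot\eta(A_0-ira) + \CO(r^{\oh})$, and it remains to prove $I_{<} = \dot\eta(A_0-ira) + \CO\big(r(\log r)^{\frac{9}{2}}\big)$.

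\emph{The head $I_{<}$.} Decompose $\operatorname{Tr}(\CD_r e^{-t\CD_r^2}) = S_{\mathrm{lo}}(t)+S_{\mathrm{hi}}(t)$, where $S_{\mathrm{lo}}(t)$ is the contribution of the eigensections in $\CV(r,\tfrac13 r^{\oh})$. Carrying out the same substitution shows $\frac{1}{\sqrt\pi}\int_0^\tau t^{-1/2}S_{\mathrm{lo}}(t)\,\dd t + \frac{1}{\sqrt\pi}\int_\tau^\infty t^{-1/2}S_{\mathrm{lo}}(t)\,\dd t = \sum_{0<|\nu|<\frac13 r^{1/2}}\operatorname{sgn}(\nu)$, a signed eigenvalue count, so the low-mode part of $I_{<}$ equals this count minus the low-mode part of $I_{>}$, and hence differs from $\dot\eta(A_0-ira)$ only by the Gaussian-smoothing term $\frac{2\sqrt\tau}{\sqrt\pi}\sum_{|\nu|<\frac13 r^{1/2}}\operatorname{sgn}(\nu)\int_0^{|\nu|}e^{-\tau u^2}\dd u$; this term, together with the signed count of eigenvalues in the shell $\tfrac13 r^{1/2}(\log r)^{-1/2}\le|\nu|<\tfrac13 r^{1/2}$, is shown to be $\CO\big(r(\log r)^{\frac{9}{2}}\big)$ by the shell-wise form of \eqref{eqn_dim_est} and the approximate symmetry of the spectrum of $\CD_r$ at energies of order $r^{\oh}$. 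For $S_{\mathrm{hi}}$, split the $t$-integral at $t=r^{-1}$: on $(0,r^{-1}]$ — the range where the heat parametrix for $e^{-t\CD_r^2}$ has remainder bounds uniform in $r$ — finitely many terms of the short-time expansion suffice, and since $\int_0^{r^{-1}}t^{-1/2}t^{\,j+1/2}\dd t = \tfrac{1}{j+1}r^{-(j+1)}$ is paired against a $j$-th coefficient of size $\CO(r^{\,j+1})$, each term, and likewise the parametrix remainder, contributes $\CO(1)$; on $[r^{-1},\tau]$ one invokes the decomposition of $\BS$ underlying Theorem \ref{thm_point_estimate_00} (on the line subbundle carrying the small eigenvalues $\CD_r$ is to leading order $\tfrac{ir}{2}$ times the Reeb derivative plus a harmonic oscillator in the contact directions, while on the complement $\CD_r^2\ge\tfrac12 r$) to bound $S_{\mathrm{hi}}(t)=\CO\big(r^{3/2}(\log r)^{4}\big)$ there, so that $\frac{1}{\sqrt\pi}\int_{r^{-1}}^{\tau}t^{-1/2}S_{\mathrm{hi}}(t)\,\dd t = \CO\big(r(\log r)^{\frac{9}{2}}\big)$ once the $t$-integration supplies the factor $\sqrt\tau\sim r^{-\oh}(\log r)^{\oh}$. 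Assembling these pieces gives $I_{<} = \dot\eta(A_0-ira) + \CO\big(r(\log r)^{\frac{9}{2}}\big)$, and the theorem follows.

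The main obstacle is the estimate of $S_{\mathrm{hi}}$ on $r^{-1}\le t\le\tau$: there the short-time parametrix no longer has $r$-uniform remainders, while the crude bound $|\operatorname{Tr}(\CD_r e^{-t\CD_r^2})|\le\|e^{-(t-r^{-1})\CD_r^2}\|_{\mathrm{op}}\,\|\CD_r e^{-r^{-1}\CD_r^2}\|_{\mathrm{tr}} = \CO(r^2)$ throws away exactly the cancellation between positive and negative eigenvalues that makes $\operatorname{Tr}(\CD_r e^{-t\CD_r^2})$ small. One must therefore use the model form of $\CD_r$ from the proof of Theorem \ref{thm_point_estimate_00} to confine the surviving asymmetry to a narrow band of eigenvalues around $r^{\oh}$ and then estimate its signed count — a spectral-asymmetry input strictly finer than the unsigned bound \eqref{eqn_dim_est}. (The powers $\tfrac13$ and $20$, and correspondingly the exponent $\tfrac{9}{2}$, are artifacts of this bookkeeping.)
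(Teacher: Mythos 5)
Your overall reduction is fine as far as it goes: the APS formula \eqref{eqn_eta_01} does convert the theorem into the statement $\eta(\CD_r)=2\,\dot{\eta}(A_0-ira)+\CO(r(\log r)^{\frac{9}{2}})$, and your computation identifying the tail $I_{>}$ of the heat representation with $\dot\eta$ (the substitution giving the prefactor $\frac{2\sqrt\tau}{\sqrt\pi}=(\frac{80}{\pi})^{\oh}r^{-\oh}(\log r)^{\oh}$) is correct. But the proof has a genuine gap exactly where you flag it, and it is not a bookkeeping issue. Two signed-cancellation estimates are asserted without proof and do not follow from anything available: (1) the bound $S_{\mathrm{hi}}(t)=\CO(r^{3/2}(\log r)^4)$ for $r^{-1}\le t\le\tau$, i.e.\ cancellation in the signed trace over eigenvalues $|\nu|\geq\frac13 r^{\oh}$ — the estimates \eqref{eqn_dim_est}, Corollary \ref{cor_eigen_distribution} and the heat-trace bound of Taubes are all \emph{unsigned} and only give $\CO(r^{2})$ (or $\CO(r^{3/2})$ after the $t$-integration), which exceeds the allowed error; and (2) the ``approximate symmetry of the spectrum at energies of order $r^{\oh}$'' used to kill the Gaussian-smoothing term of the low modes, whose crude unsigned bound is again $\CO(r^{3/2})$. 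Both are spectral-asymmetry statements at least as deep as the theorem itself; indeed the paper points out that $|\eta(A_0-ira)-2\dot\eta(A_0-ira)|\leq c\,r(\log r)^{\frac{9}{2}}$ is a \emph{consequence} of the theorem, and explicitly raises direct control of $\eta(A_0-ira)$ (without the spectral flow) as an open problem — which is what your route would require.

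The paper avoids this wall by never forming the full $\eta$-invariant or the $t$-integral. It works with the eigenvalue curves $\lambda_j(s)$ of the one-parameter family $D_s$, $1<s<\rb$, inside the window $|\lambda|^2<\frac19 s$ (the configuration $\CE_{\rb}$), and assigns to them the Gaussian displacement built from $\Psi_r$ with $t(r)=20r^{-1}\log r$. Proposition \ref{thm_sf_401} is then purely combinatorial: curves crossing zero contribute $+1$ (only positive crossings occur, by Proposition \ref{prop_beta_estimate_00}(ii)), curves cut off at $s=\rb$ contribute exactly the partial displacements that assemble into $\dot\eta(\rb)$, and all errors are $\CO(\rb)$ by the unsigned counting bound Corollary \ref{cor_main_thm_00}. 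Proposition \ref{prop_sf_401} computes the displacement itself: its derivative in $s$ involves the heat trace only at the \emph{single} time $t(s)$, through $\sum_{|\lambda_j|<\frac13 s^{\oh}}\lambda_j'e^{-\lambda_j^2 t}$, which via the slope formula (\ref{eqn_basic_est_01}) becomes a weighted diagonal heat coefficient handled by Theorem \ref{thm_heat_00}; the remaining terms ($\dot\Psi$, $\ddot\Psi$) are controlled using $\lambda'=\oh+\CO(r^{-1})$ and the unsigned counts, with no cancellation among large eigenvalues ever required, since eigenvalues leave the window through its sides $\lambda^2=\frac19 s$ rather than being summed over. If you want to salvage your approach you would have to supply the signed high-energy trace bound on $[r^{-1},\tau]$ from scratch; as written, the argument does not close.
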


Theorem D says that we only need to focus on the small eigenvalues of $D_{A_0-ira}$ in order to study the spectral flow from $A_0$ to $A_0-ira$.  With the help of (\ref{eqn_dim_est}), both summations of $\dot{\eta}(A_0-ira)$ can be shown to be smaller than $c_5 r^{\frac{3}{2}}(\log r)^{-\oh}$.  That is to say,
\begin{align*}  \big(\frac{80}{\pi}\big)^\oh r^{-\oh}(\log r)^\oh \Big( \big|\sum_{\psi\in\CV_{r}^+}\int^{\frac{1}{3}r^\oh}_{\lambda_\psi} e^{-20(r^{-1}\log r)u^2}\,\dd u\big| \qquad & \\
 + \big|\sum_{\psi\in\CV_{r}^-}\int_{-\frac{1}{3}r^\oh}^{\lambda_\psi} e^{-20(r^{-1}\log r)u^2}\,\dd u\big| \Big) &\leq 2c_5 r^{\frac{3}{2}}(\log r)^{-\oh} ~,   \end{align*}
and Theorem B follows.

If the eigenvalues of $\CV_r^+\cup\CV_r^-$ are `uniformly distributed', one can image that $\dot{\eta}(A_0-ira)$ is actually much smaller than $r^{\frac{3}{2}}(\log r)^{-\oh}$ due to cancellation.  In the sequel of this paper \cite{ref_Ts3}, the `uniformly distributed' property will be justified for certain types of contact forms in each isotopy class of contact structures.

%%%%%%%%%%%%%%%%%%
\subsection{Spectral asymmetry}
By combining with  the results of Atiyah, Patodi and Singer, Theorem D has an interesting corollary.  As a background for the corollary, consider the four manifold $X = [0,r]\times Y$.  The spinor bundle $\BS\to Y$ can naturally be regarded as a bundle over $X$.  Define the operator $\fD:\CC^\infty(X;\BS)\to\CC^\infty(X;\BS)$ by
$$ \fD = \frac{\pl}{\pl s} + D_{A_0-isa} $$
where $s$ is the parameter for the interval $[0,r]$.  With appropriate boundary conditions (\cite[(2.3)]{ref_APS1}), the operator $\fD$ is a Fredholm operator from $L^2_1(X,\BS)\to L^2(X,\BS)$.  As observed by \cite[p.95]{ref_APS3}, the index of $\fD$ is equal to the spectral flow from $A_0$ to $A_0 - ira$.  Meanwhile, \cite[(4.3) and pp.59--60]{ref_APS1} gives a formula for the index of $\fD$.  Their result in the present setting says that
\begin{align}\label{eqn_eta_01}\begin{split}
\sfa_a(A_0,r) &= \frac{r^2}{32\pi^2}\int_Ya\wedge\dd a + \frac{r}{16\pi^2}\int_Ya\wedge({i}F_{A_0}) \\
&\qquad + \frac{1}{2}(h({A_0-ira}) + \eta({A_0-ira}) - h({A_0}) - \eta({A_0}))
\end{split}\end{align}
where  $h(A)$ is the dimension of $\ker(D_A)$ and $\eta(A)$ is the spectral asymmetry function of $D_A$.  This spectral asymmetry function is defined as follows:  it is the value at $z=0$ of the analytic continuation to $\BC$ of
\begin{align*}
\sum_{\psi}{\rm sign}(\lambda_\psi)|\lambda_\psi|^{-z} \qquad\text{ defined on where } \re(z)>\!>1 ~.
\end{align*}
The summation is indexed by an orthonormal eigenbasis of $D_A$ with nonzero eigenvalue, and $\lambda_{\psi}$ is the eigenvalue of $\psi$.  Theorem 3.10 of \cite{ref_APS1} asserts that the analytic continuation is finite at $z=0$.  One can also see \cite[\S1]{ref_Nicolaescu_eta} for a nice survey on the $\eta$-invariant and the formula (\ref{eqn_eta_01}).

Roughly speaking, $\eta(A)$ measures the difference between the total number of positive eigenvalues and the total number of negative eigenvalues.  As pointed out by Taubes \cite[Corollary 3]{ref_Taubes_sf}, Theorem A and formula (\ref{eqn_eta_01}) imply that the subleading order term of the spectral flow function is the same as
$$ \oh\big(h(A_0-ira) + \eta(A_0-ira)\big)  $$
up to an $\CO(r)$ difference.

Let $(Y,a)$ be a contact three manifold with an adapted metric $\dd s^2$.  The dimension estimate (\ref{eqn_dim_est}) implies that
$$   h({A_0 - ira})\leq c_3r ~.   $$
It follows from Theorem D and (\ref{eqn_eta_01}) that there exists an $r$-independent constant $c_6$ such that
\begin{align}
\big|\eta({A_0-ira}) - 2\dot{\eta}(A_0-ira)\big| &\leq c_6r(\log r)^{\frac{9}{2}}
\end{align}
for any $r\geq c_6$.  This relates the full spectral asymmetry to the spectral asymmetry involving only small eigenvalues.  It would be interesting if one can say something about the behavior of $\eta(A_0-ira)$ as $r\to\infty$ without using the spectral flow.

\begin{rmk}
The constants $c_{(\cdot)}$ in this paper are always \emph{independent} of $r$.  In other words, they only depend on the contact form $a$, the metric $\dd s^2$ and the connection $A_0$.  The subscript is simply to indicate that these constants might increase/decrease after each step.  The subscript will be returned to $1$ at the beginning of each section.
\end{rmk}

%%%%%%%%%%%%%%%%%%
\subsection{Contents of this paper}
This paper is divided into three parts.

\S\ref{sec_basic_estimate} and \S\ref{sec_pt_est} are devoted to the proof of Theorem C.  The Clifford action of the contact form on $\BS$ is skew-Hermitian.  It induces the eigenbundle splitting $\BS = E_1\oplus E_2$, where $\cl(a)$ acts as $i|a|$ and $-i|a|$, respectively.  With respect to this splitting,  a section $\psi\in\CC^\infty(Y;\BS)$ can be written as $(\alpha,\beta)$.  There are three observations based on this splitting.  The first observation is that $\beta$ is much smaller than $\alpha$.  Secondly, on a small disk transverse to the Reeb vector field, the $E_2$-component of the Dirac equation reads
$$ (\pl_x + i\pl_y)(\alpha) = \text{smaller terms such as }\beta $$
where $x$ and $y$ are local coordinate on the disk.  Lastly, the $E_1$-component of the Dirac equation implies that the integral of $|\alpha|^2$ over a transverse disk is bounded by its integral over $Y$.  That is to say, the integral of $|\alpha|^2$ do not concentrate on some particular disk.  With this understood, the strategy is to estimate the sup-norm of $\beta$ and other smaller terms by the sup-norm of $\alpha$.  Then apply the Cauchy integral formula to estimate the sup-norm of $\alpha$.

In \S\ref{sec_heat_kernel} we apply the parametrix technique to study the heat kernel for the square of the Dirac operator $D_{A_0-ira}$.  With an \emph{a priori} estimate on the heat kernel, the parametrix argument generates a small time expansion of the heat kernel.  The accuracy of the output relies on the original \emph{a priori} estimate.  Proposition \ref{prop_heat_00} supplies such an \emph{a priori} estimate.  It uses Theorem C to obtain a $L^2$ estimate (in space) of the heat kernel.  %Proposition \ref{prop_heat_00} is sharp in the order of $r$.

In \S\ref{sec_sf} we discuss on the spectral flow from $D_{A_0}$ to $D_{A_0-ira}$.  Let $\CE_r$ be the following eigenvalue configuration:
$$  \CE_r = \big\{(s,\lambda) ~\big|~ 0<s<r,~\lambda\in{\rm spec}(D_{A_0-isa}),\text{ and }|\lambda|<\frac{1}{3}r^\oh\big\} ~.  $$
We assign a displacement function $\Psi$ to $\CE_r$.  The displacement $\Psi(\CE_r)$ is closely related to the spectral flow $\sfa_a(A_0,r)$.  Its behavior for $r>\!>1$ can be computed by the heat kernel expansion.  The main purpose of \S\ref{sec_sf} is to prove Theorem B and Theorem D by this displacement $\Psi(\CE_r)$.

\begin{ack*}
The author would like to thank Cliff Taubes for the support and for helpful comments on an earlier draft of this paper.
\end{ack*}

%%%%%%%%%%%%%%%%%%%%%%%%%%%%%%%%%%%%
\section{Dirac Operator on Contact Three Manifolds}\label{sec_basic_estimate}
Suppose that $(Y,a)$ is a contact three manifold.  A metric $\dd s^2$ is called \emph{conformally adapted} if $\dd s^2 = \Omega^2\dd\mr{s}^2$ for some adapted metric $\dd\mr{s}^2$ and some smooth function $\Omega$ with
\begin{align*}  \frac{9}{10}\leq\Omega\leq\frac{10}{9}  ~. \end{align*}
The function $\Omega$ is called the \emph{conformal factor}.  The particular bounds chosen here are just convenient normalizations; any other fixed bounds would do the job.  This notion is a minor generalization of an adapted metric.  It is designed to handle some technical issue in \cite[\S4]{ref_Ts3}.

%%%%%%%%%%%%%%%%%%
\subsection{Spectral flow and conformal change of the metric}\label{subsec_conformal}
Many spectral properties of a Dirac operator are invariant under conformal changes of metric.  The main purpose of this subsection is to review some of them.  Denote by $D_A$ the associated Dirac operator using the metric $\dd s^2$, and by $\mr{D}_A$ the associated Dirac operator using the metric $\dd\mr{s}^2$.

In \cite[\S1.4]{ref_Hitchin} Hitchin found the transformation formula between $D_A$ and $\mr{D}_A$, which is explained as follows.  The spinor bundles using $\dd s^2$ and $\dd\mr{s}^2$ can be thought as the {same} bundle with the {same} Hermitian metric.  With this understood, the Clifford actions of $TY$ are related by
\begin{align}\label{eqn_conformal_01}
\cl(u) &= \Omega\,\mr{\cl}(u)
\end{align}
for any tangent vector $u$.  The Dirac operators are related by
\begin{align}\label{eqn_conformal_02}
D_A\psi &= \Omega^{-\frac{n+1}{2}}\mr{D}_A(\Omega^{\frac{n-1}{2}}\psi) = \Omega^{-2} \mr{D}_A(\Omega\psi)
\end{align}
for any $\psi\in\CC^\infty(Y;\BS)$.  The formula in the middle wors for any dimension $n$.  It follows that the dimension of $\ker(D_A)$ is a conformal invariant (\cite[Proposition 1.3]{ref_Hitchin}).

In \cite{ref_Hitchin}, Hitchin did the computation for the trivial spin-c structure (or the spin structure).  Since $\cl(w) = \Omega^{-1}\mr{\cl}(w)$ for any $1$-form $w$, the formula (\ref{eqn_conformal_02}) holds for any spin-c Dirac operator as well.  It can be seen from the local expression of the Dirac operator (\cite[(3.3)]{ref_Morgan}).

\smallskip
Besides the dimension of the kernel, the spectral flow function $\sfa_a(A_0,r)$ is also a conformal invariant.  A na\"ive reason is that the spectral flow is constructed by counting the dimension of the kernel of associated Dirac operators.  

According to (\ref{eqn_eta_01}), the conformal invariance of the spectral flow function $\sfa_a(A_0,r)$ follows from the conformal invariance of the $\eta$-invariant.  The latter property is proved by Atiyah, Patodi and Singer \cite[pp.420--421]{ref_APS2} for certain Dirac operator, and by Rosenberg \cite[Theorem 3.8]{ref_Rosenberg} for general Dirac operators.

%%%%%%%%%%%%%%%%%%
\subsection{Canonical spin-c structure of a contact form}\label{subsec_can_spin}
As described in \cite[\S2.1]{ref_Taubes_SW_Weinstein}, the spin-c structures and spin-c Dirac operators can be seen more geometrically with the help of the contact form.  Suppose that $\dd s^2 = \Omega^2\dd\mr{s}^2$ is a conformally adapted metric.

Since the Reeb vector field $v$ is nowhere vanishing, it induces the splitting $\BS = E_1\oplus E_2$ of any spinor bundle into eigenbundles for $\cl(v)$.  The convention here is that $\cl(v)$ acts as $i|v|$ on $E_1$ and as $-i|v|$ on $E_2$.  There is a \emph{canonical spin-c structure} determined by the contact form $a$, that where the bundle $E_1$ is the trivial bundle.  The splitting of the \emph{canonical spinor bundle} is written as $\underline{\BC}\oplus K^{-1}$, where $K^{-1}$ is isomorphic as an $\rm{SO}(2)$ bundle to $\ker(a)$ with the orientation given by $\dd a$.  To be more precise, let $J$ be the rotation counterclockwisely on $\ker(a)$ by $90$ degree.  The rotation operator $J$ is determined by $\dd s^2$ and $\dd a$.  The local sections of $K^{-1}$ consists of $u-i J(u)$ for any $u\in\ker(a)$.

\smallskip
The conformally adapted metric determines a \emph{canonical connection} on the canonical spinor bundle $\underline{\BC}\oplus K^{-1}$.  Let $\bo$ be the unit-normed, trivializing section of $\underline{\BC}$.  The canonical connection is the unique spin-c connection such that the associated Dirac operator annihilates the section $\Omega^{-1}\bo$.  The proof for its existence and uniqueness can be found in \cite[Lemma 10.1]{ref_Hutchings}.

\begin{rmk}
The Dirac operator of the canonical connection satisfies the transformation rule (\ref{eqn_conformal_02}).  Let $\underline{\BC}\oplus \mr{K}^{-1}$ be the canonical spinor bundle using $\dd\mr{s}^2$.  The metrics $\dd s^2$ and $\dd\mr{s}^2$ define the same rotation operator $J$.  It follows that the isometric identification of the canonical spinors bundles is characterized by
\begin{align*}\begin{array}{ccc}
\underline{\BC}\oplus K^{-1}  &\longrightarrow  &\underline{\BC}\oplus\mr{K}^{-1} \\
(\bo, u-i J(u))  &\mapsto  &\big(\bo, \Omega(u-i J(u))\big)
\end{array} ~. \end{align*}
Since the canonical connection is \emph{uniquely} determined by the annihilation property, the canonical connections of $\dd s^2$ must become the canonical connection of $\dd\mr{s}^2$ under the above identification.
\end{rmk}

\smallskip
Any two spin-c structures differ by the tensor product with a complex line bundle \cite[Appendix D]{ref_LM}.  The specification of a canonical spin-c structure allows us to write any spinor bundle as
$$  \BS = E\oplus EK^{-1}  $$
for some Hermitian line bundle $E\to Y$.  Its determinant bundle $\det(\BS)$ is $E^2K^{-1}$.  Let $A_{\text{can}}$ be the connection on $K^{-1} = \det(\underline{\BC}\oplus K^{-1})$ that induces the canonical connection.  Any connection on $E^2K^{-1}$ can be written as $A_0 = A_{\text{can}} + 2A_E$ for some unitary connection $A_E$ on $E$.  In other words, a unitary connection $A_E$ on $E$ determines a unitary connection $A_0$ on $\det(\BS)$, and hence determines a spin-c connection on $\BS = E\oplus EK^{-1}$.

\smallskip
We abbreviate $D_{A_0-ira}$ as $D_r$, and the spectral flow function $\sfa_a(A_0,r)$ as $\sfa_a(r)$.  The above settings and notations (the contact form, conformally adapted metric and spin-c Dirac operators) will be used \emph{throughout the rest of this paper}.

%%%%%%%%%%%%%%%%%%
\subsection{Some basic estimates}

With the splitting $\BS = E\oplus EK^{-1}$, the following proposition provides a fundamental estimate on components of the eigensections of $D_r$.

\begin{prop}\label{prop_beta_estimate_00}
There exists a constant $c_1$ determined by the contact form $a$, the conformally adapted metric $\dd s^2$ and the connection $A_0$ on $\det(\BS)$ such that the following holds.
\begin{enumerate}
\item For any $r\geq c_1$, suppose that $\psi$ is a eigensection of $D_r$ such that $|\lambda_\psi|^2< \frac{3}{4} r$.  Then
\begin{align*}
\int_Y |\beta|^2 + r^{-1}\int_Y|\nabla_r\beta|^2 \leq c_1r^{-1} \int_Y |\alpha|^2
\end{align*}
where $\alpha$ is the $E$ component of $\psi$, and $\beta$ is the $EK^{-1}$ component of $\psi$.
\item Suppose that there is a continuous path of eigenvalues $\lambda(s)$ of $D_{s}$ which is smooth at $r\geq c_1$ and  $|\lambda(r)|^2<\frac{3}{4} r$.  Then
\begin{align*}
\frac{9}{20} - c_1r^{-1} \leq \lambda'(r) \leq \frac{5}{9} ~.
\end{align*}
In particular, there are only \emph{positive} zero crossings for the spectral flow of $D_s$ when $s\geq 3c_1$.
\end{enumerate}
\end{prop}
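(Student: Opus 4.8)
The plan is to prove (i) by a Bochner--Weitzenb\"ock argument keyed to the splitting $\BS=E\oplus EK^{-1}$, and then to deduce (ii) from (i) by first-order perturbation theory. For (i) I would begin from the Lichnerowicz identity $D_r^2=\nabla_r^*\nabla_r+\oh\,\cl(F_{A_0-ira})+\tfrac14 s$, where $s$ is the scalar curvature and $F_{A_0-ira}=F_{A_0}-ir\,\dd a$ is the curvature of $A_0-ira$ on $\det(\BS)$. The only $r$-dependent contribution is $-\tfrac{ir}{2}\cl(\dd a)$, and because $(Y,a,\dd s^2)$ is (conformally) adapted, $\dd a$ is a multiple of $*a$, so $\cl(\dd a)$ is a multiple of Clifford multiplication by the Reeb field and therefore \emph{diagonal} for the splitting. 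Unwinding the paper's convention $\cl(e_1)\cl(e_2)=-\cl(e_3)$ together with $|a|=1$, one finds that this term acts as $-r\Omega^{-2}$ on $E$ and as $+r\Omega^{-2}$ on $EK^{-1}$: the large curvature term carries a \emph{definite} sign on each summand and is \emph{positive} on the $\beta$-summand, which is the mechanism forcing $\beta$ to be small.

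I would then test $D_r^2\psi=\lambda_\psi^2\psi$ against two sections. Pairing with $\psi$ gives
\[ \int_Y|\nabla_r\psi|^2+r\int_Y\Omega^{-2}|\beta|^2=r\int_Y\Omega^{-2}|\alpha|^2+\lambda_\psi^2\int_Y|\psi|^2+\CO\Big(\int_Y|\psi|^2\Big), \]
hence $\int_Y|\nabla_r\psi|^2\leq\CO(r)\int_Y|\psi|^2$ and the crude bound $\int_Y|\beta|^2\leq\CO(1)\int_Y|\alpha|^2$. To upgrade this by a full power of $r$, pair $D_r^2\psi=\lambda_\psi^2\psi$ instead with $\varepsilon\psi$, where $\varepsilon$ is the grading operator ($+1$ on $E$, $-1$ on $EK^{-1}$). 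Integration by parts turns $\langle\nabla_r^*\nabla_r\psi,\varepsilon\psi\rangle$ into $\langle\nabla_r\psi,\varepsilon\,\nabla_r\psi\rangle$ plus a cross-term $\langle\nabla_r\psi,(\nabla_r\varepsilon)\psi\rangle$, and the crucial observation is that $\nabla_r\varepsilon$ is a bounded tensor that is \emph{off-diagonal} for the splitting (it commutes with the diagonal part of the connection), so this cross-term is dominated by $C\big(\|\nabla_r\alpha\|_{L^2}\|\beta\|_{L^2}+\|\nabla_r\beta\|_{L^2}\|\alpha\|_{L^2}\big)$. Subtracting the two identities cancels the $|\nabla_r\alpha|^2$-contributions and leaves, up to bounded second-fundamental-form corrections of the splitting,
\[ \int_Y|\nabla_r\beta|^2+r\int_Y\Omega^{-2}|\beta|^2=\lambda_\psi^2\int_Y|\beta|^2+\oh\int_Y\langle\nabla_r\psi,(\nabla_r\varepsilon)\psi\rangle+\CO\Big(\int_Y|\psi|^2\Big). \]
Since $\Omega^{-2}\geq\tfrac{81}{100}>\tfrac34$ whereas $|\lambda_\psi|^2<\tfrac34 r$, the left-hand term $r\int_Y\Omega^{-2}|\beta|^2$ strictly dominates $\lambda_\psi^2\int_Y|\beta|^2$; using the first identity to bound $\|\nabla_r\alpha\|_{L^2}$ by $\CO(\sqrt r)\,\|\psi\|_{L^2}$ and then Cauchy--Schwarz with a small parameter, every remaining error term involving $\beta$ is $\CO(\sqrt r)\int_Y|\beta|^2$ or smaller and is absorbed once $r$ is large. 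This yields $\int_Y|\nabla_r\beta|^2+r\int_Y|\beta|^2\leq c_1\int_Y|\alpha|^2$, which is (i).

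For (ii), let $\lambda(s)$ be a smooth branch of eigenvalues of $D_s$ near $s=r$ with unit eigensection $\psi=(\alpha,\beta)$. As $A_0-isa$ varies smoothly in $s$, the Feynman--Hellmann formula gives $\lambda'(r)=\langle\dot D_r\psi,\psi\rangle$; here $\dot D_r=-\tfrac{i}{2}\cl(a)$ (the factor $\oh$ because the induced change on the spinor connection is half the change $-ira$ on $\det(\BS)$), which acts as $+\oh\Omega^{-1}$ on $E$ and $-\oh\Omega^{-1}$ on $EK^{-1}$, so $\lambda'(r)=\oh\int_Y\Omega^{-1}\big(|\alpha|^2-|\beta|^2\big)$. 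The upper bound $\lambda'(r)\leq\oh\cdot\tfrac{10}{9}\int_Y|\alpha|^2\leq\tfrac59$ is immediate. For the lower bound the hypothesis $|\lambda(r)|^2<\tfrac34 r$ permits applying (i), so $\int_Y|\beta|^2\leq c_1 r^{-1}$, hence $\int_Y|\alpha|^2\geq 1-c_1 r^{-1}$ and $\lambda'(r)\geq\oh\big(\tfrac{9}{10}(1-c_1r^{-1})-\tfrac{10}{9}c_1r^{-1}\big)=\tfrac{9}{20}-\CO(r^{-1})$. Finally, at a zero crossing $s=s_0$ one has $|\lambda(s_0)|=0<\tfrac34 s_0$, so the same estimate applies and $\lambda'(s_0)>0$ once $s_0$ exceeds a fixed multiple of $c_1$; hence all crossings are positive.

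The step I expect to be the main obstacle is the second pairing: one must organize the cross-term $\int_Y\langle\nabla_r\psi,(\nabla_r\varepsilon)\psi\rangle$ together with the second-fundamental-form corrections of $\BS=E\oplus EK^{-1}$ so that they are split against $\|\alpha\|_{L^2}$ and $\|\beta\|_{L^2}$ separately, in just the right way. A careless estimate only gains a factor $r^{-1/2}$; recovering the full $r^{-1}$ is exactly where the numerology --- the constant $\tfrac34$ in the hypothesis and the conformal-factor bounds $[\tfrac9{10},\tfrac{10}{9}]$ --- is used.
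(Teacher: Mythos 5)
Your proposal is correct and follows essentially the same route as the paper: assertion (i) comes from the Weitzenb\"ock formula, using that the $-ir\cl(\dd a/2)$ term is diagonal for $\BS=E\oplus EK^{-1}$ and acts as $+r\Omega^{-2}\geq\frac{81}{100}r>\lambda_\psi^2$ on the $EK^{-1}$ summand, and your pairing against $\psi$ and $\varepsilon\psi$ followed by subtraction is algebraically the same as the paper's pairing of the formula with $\beta$, with the $\nabla_r\varepsilon$ cross-terms playing the role of the second-fundamental-form errors absorbed there. Assertion (ii) is likewise obtained exactly as in the paper, from the eigenvalue-derivative formula $\lambda'=\oh\int_Y\Omega^{-1}(|\alpha|^2-|\beta|^2)$ combined with (i) and the conformal-factor bounds.
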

\begin{proof}
(\emph{Assertion} (i))\;  The proof is essentially the same as that of Proposition 3.1(i) in \cite{ref_Ts1}.  The key is the Weitzenb\"ock formula:
\begin{align}\label{eqn_weitzenbock_00}D_r^2\psi = \nabla_r^*\nabla_r\psi + \frac{\kappa}{4}\psi + \cl(\frac{F_{A_0}}{2})\psi - ir\cl(\frac{\dd a}{2})\psi
\end{align}
where $\kappa$ is the scalar curvature.  Since $*\dd a = 2\Omega^{-1}a$ with respect to the metric $\dd s^2$, the Clifford action $\cl(\dd a/2)$ is equal to $-\Omega^{-1}\cl(a)$. Pair (\ref{eqn_weitzenbock_00}) with $\beta$, and integrate over $Y$.  After integration by parts, we find that
\begin{align*}
\lambda_\psi^2\int_Y|\beta|^2 &\geq \int_Y \big( (\frac{81}{100}r - c_2)|\beta|^2 + \oh|\nabla_r\beta|^2 - c_2|\alpha|^2 \big)
\end{align*}
for some constant $c_2$.  Assertion (i) of the proposition follows from this inequality.

\smallskip
(\emph{Assertion} (ii))\;  According to \cite[\S5.1]{ref_Taubes_SW_Weinstein}, there exists a constant $\epsilon_1>0$ such that the multiplicity of $\lambda(s)$ of $D_{s}$ is a constant for any $s\in(r,r+\epsilon_1)$, and $\lambda(s)$ is smooth when $s\in(r,r+\epsilon_1)$.  Due to \cite[(5.4)]{ref_Taubes_SW_Weinstein}, the derivative of $\lambda(s)$ is given by 
\begin{align}\label{eqn_basic_est_01}
\lambda'(s) = \int_Y\langle\psi_s,-\frac{i}{2}\cl(a)\psi_s\rangle = \int_Y\oh\Omega^{-1}\big(|\alpha_s|^2 - |\beta_s|^2\big)
\end{align}
where $\psi_s=(\alpha_s,\beta_s)$ is a unit-normed eigensection of $D_{s}$ with eigenvalue $\lambda(s)$.  Since $|\lambda(r)|^2<\frac{3}{4}r$, there exists some positive constant $\epsilon_2\leq\epsilon_1$ such that $|\lambda(s)|^2<\frac{3}{4}r$ for any $s\in(r,r+\epsilon_2)$.  It follows from Assertion (i) and (\ref{eqn_basic_est_01}) that
\begin{align*}
\frac{9}{20} - c_3r^{-1} \leq \lambda'(s) \leq \frac{5}{9}
\end{align*}
for any $s\in(r,r+\epsilon_2)$.  Since $\lambda'(s) = \lim_{s\to r^+}\lambda'(s)$, it completes the proof of the proposition.
\end{proof}

As a remark, (\ref{eqn_basic_est_01}) implies that
\begin{align}\label{eqn_basic_est_02}
|\lambda'| &\leq \frac{5}{9}
\end{align}
without any assumption on $\lambda$.

%%%%%%%%%%%%%%%%%%%%%%%%%%%%%%%%%%%%
\section{Pointwise Estimate on Eigensections}\label{sec_pt_est}
Let $\CV(r,\lambda)$ be the vector space spanned by eigensections of $D_r$ whose eigenvalue has magnitude less than or equal to $\lambda$.  Namely,
\begin{align*}
\CV(r,\lambda) = \spn\big\{ \psi\in\CC^\infty(Y;\BS) ~\big|~ D_r\psi = \nu\psi, \text{ for some scalar }\nu\text{ with }|\nu|\leq\lambda \big\} ~.
\end{align*}
This main purpose of this section is to prove the following pointwise estimate on $\psi\in\CV(r,\lambda)$.

\begin{thm}\label{thm_point_estimate_00}
There exists a constant $c_1$ determined by the contact form $a$, the conformally adapted metric $\dd s^2$ and connection $A_0$ on $\det(\BS)$ such that the following holds.  Suppose that $r\geq c_1$ and $1\leq\lambda\leq \oh r^\oh$, then
\begin{align}
\sup_Y |\psi|^2 &\leq c_1r\lambda \int_Y|\psi|^2
\end{align}
for any $\psi\in\CV(r,\lambda)$.
\end{thm}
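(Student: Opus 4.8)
The strategy follows the outline in the introduction: reduce the sup-norm bound for a section $\psi = (\alpha, \beta) \in \CV(r,\lambda)$ to a bound on $\sup_Y|\alpha|$, then estimate $\sup_Y|\alpha|$ by a Cauchy-integral argument on small transverse disks, trading powers of $r$ and $\lambda$ against the $L^2$-norm. Throughout I work with the splitting $\BS = E \oplus EK^{-1}$ and abbreviate the covariant derivative as $\nabla_r$. Note $\CV(r,\lambda)$ is finite-dimensional and $D_r$-invariant, so it suffices to prove the estimate for a single eigensection $\psi$ with $|\lambda_\psi| \le \lambda \le \oh r^\oh$; since $\lambda^2 \le \tfrac14 r < \tfrac34 r$, Proposition \ref{prop_beta_estimate_00}(i) applies and gives $\int_Y|\beta|^2 + r^{-1}\int_Y|\nabla_r\beta|^2 \le c\, r^{-1}\int_Y|\alpha|^2$, so $\beta$ is genuinely a lower-order quantity.

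\textbf{Step 1: reduce to a mean-value / elliptic estimate for $\psi$.} From the Weitzenb\"ock formula (\ref{eqn_weitzenbock_00}) and the fact that $\psi$ is an eigensection with eigenvalue $\lambda_\psi$, one gets a differential inequality of the form $\nabla_r^*\nabla_r\psi = (\lambda_\psi^2 - \tfrac{\kappa}{4})\psi - \cl(\tfrac{F_{A_0}}{2})\psi + ir\,\cl(\tfrac{\dd a}{2})\psi$; the dominant term on the right involves the factor $r$ from the Clifford action of $\dd a$, but crucially $\cl(\dd a/2) = -\Omega^{-1}\cl(a)$ acts as $-i\Omega^{-1}|a|$ on $E$ and $+i\Omega^{-1}|a|$ on $EK^{-1}$, so on the $\beta$-component this term has the \emph{good} sign. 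This is exactly the mechanism that makes $\beta$ small. For $\alpha$, I expect to use the standard Bochner-type pointwise inequality $\Delta|\alpha| \le (\text{bounded} + |\lambda_\psi|^2/r + \dots)|\alpha| + (\text{coupling to }\beta)$ together with a Moser iteration on a ball of radius $\sim r^{-1/2}$ (the natural length scale of the problem), which converts the $L^2$-bound on $\alpha$ over such a ball into a sup-bound, picking up the volume factor $r^{3/2}$. The coupling to $\beta$ is controlled by Step 1's $L^2$-bound plus elliptic regularity for $\beta$'s equation.

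\textbf{Step 2: the Cauchy-integral estimate on transverse disks.} This is where the contact structure enters and where the improvement over a naive bound $c\,r^{3/2}\int_Y|\psi|^2$ comes from. On a small disk $\Sigma$ transverse to the Reeb vector field, with holomorphic-type coordinate $z = x+iy$, the $EK^{-1}$-component of the Dirac equation reads $(\pl_x + i\pl_y)\alpha = (\text{terms of size } \lesssim r^{1/2})\alpha + (\text{terms involving } \beta \text{ and its derivatives})$ — i.e.\ $\alpha$ is an approximate holomorphic function on a disk of radius $\sim r^{-1/2}$. Writing $\alpha = e^{\phi} h$ with $\pl_{\bar z}\phi$ chosen to absorb the inhomogeneity (solvable since the coefficient is $O(r^{1/2})$ on a disk of area $O(r^{-1})$, so $|\phi|$ stays bounded), $h$ is genuinely holomorphic, and the Cauchy integral formula on the smaller disk gives $|h(0)|^2 \lesssim r \int_{\Sigma}|h|^2$, hence $|\alpha(0)|^2 \lesssim r\int_{\Sigma}|\alpha|^2$. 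The final ingredient — the third observation in the introduction — is that the $E$-component of the Dirac equation forces $\int_{\Sigma}|\alpha|^2 \lesssim \lambda \int_Y|\psi|^2$: roughly, integrating the $E$-component equation along the Reeb flow and using that $\nabla_v\alpha \approx (ir/2)\alpha$ shows $|\alpha|^2$ does not concentrate on any single transverse slice, and the discrepancy from this approximate transport is measured by $\lambda$. Combining, $|\alpha(0)|^2 \lesssim r\lambda\int_Y|\psi|^2$, and then Step 1 promotes this to $\sup_Y|\psi|^2 \lesssim r\lambda\int_Y|\psi|^2$.

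\textbf{Main obstacle.} The hard part is making Step 2 rigorous and \emph{uniform in $r$}: one needs a transverse foliation (or at least a local family of transverse disks covering $Y$) whose geometry is controlled independently of $r$ when measured in the rescaled coordinates, and one needs to carry all the error terms — the $\beta$-contributions, the curvature of $K^{-1}$, the failure of $\nabla_v\alpha$ to be exactly $(ir/2)\alpha$, and the non-holomorphic remainder in the $\bar\partial$-equation — through both the Cauchy estimate and the Reeb-transport estimate, checking that each is genuinely subdominant to the claimed $r\lambda$. The interplay is delicate because $\lambda$ ranges up to $r^{1/2}$, so terms that look like $O(r^{1/2})\cdot(\text{stuff})$ must be shown to be absorbable; this is precisely why the hypothesis $\lambda \le \oh r^{1/2}$ (rather than, say, $\lambda \le r$) is needed, and why the constant $\tfrac12$ appears. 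I expect the bookkeeping of these error terms against the weight $r\lambda$ to be the technical heart of the argument.
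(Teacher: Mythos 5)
Your overall route is the paper's route: a preliminary elliptic sup-norm estimate at the scale $\epsilon\sim r^{-\oh}$, a pointwise bound on $\beta$, approximate holomorphicity of $\alpha$ on transverse disks plus the Cauchy integral formula, and a Reeb-transport argument showing that $\int_{C_z}|\alpha|^2$ is bounded by $\lambda\int_Y|\psi|^2$. But your opening reduction is a genuine gap, not a cosmetic one: you claim that since $\CV(r,\lambda)$ is spanned by eigensections it suffices to prove the bound for a single eigensection. A sup-norm estimate is not linear in $\psi$. If $\psi=\sum_j c_j\psi_j$ with $\{\psi_j\}$ orthonormal eigensections each satisfying $\sup_Y|\psi_j|^2\leq c\,r\lambda\int_Y|\psi_j|^2$, the triangle and Cauchy--Schwarz inequalities only give $\sup_Y|\psi|^2\leq c\,r\lambda\,\dim\CV(r,\lambda)\int_Y|\psi|^2$, and $\dim\CV(r,\lambda)$ is itself of order $r\lambda$ (this dimension bound is a \emph{consequence} of the theorem, Corollary \ref{cor_main_thm_00}(i)), so you lose a full factor $r\lambda$. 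The theorem must be proved for an arbitrary element of $\CV(r,\lambda)$, for which the eigenvalue equation is replaced by the weaker information $\int_Y|D_r^k\psi|^2\leq\lambda^{2k}\int_Y|\psi|^2$. In particular Proposition \ref{prop_beta_estimate_00}(i) is not available as you invoke it; its substitute (Lemma \ref{lem_CV_beta}) costs an extra $\lambda^2$, namely $\int_Y|\beta|^2+r^{-1}\int_Y|\nabla_r\beta|^2\leq c\,r^{-1}\lambda^2\int_Y|\alpha|^2$, and every later error term ($D_r^2\psi$ in the Weitzenb\"ock/maximum-principle steps, the pointwise bounds on $\beta$ and $\nabla_r\beta$) likewise acquires powers of $\lambda$. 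Verifying that these losses are still dominated by $r\lambda$ under the hypothesis $\lambda\leq\oh r^\oh$ is precisely the bookkeeping you defer, and it does not follow from the single-eigensection case. Since the applications (the evaluation-functional argument for the dimension bound, the heat-kernel estimates) need the bound for arbitrary elements of $\CV(r,\lambda)$, the reduction invalidates the statement you set out to prove.

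A secondary point: in Step 2 the gauge trick $\alpha=e^{\phi}h$ can only absorb the \emph{multiplicative} zeroth-order coefficient in the approximate $\dbr$-equation; the terms involving $\beta$, $\pl_z\beta$ and $D_r\psi$ are additive inhomogeneities and must be fed through the Cauchy integral against $1/(x+iy)$, which requires sup-norm control of the quality $\sup_Y|\beta|^2\lesssim r^{-1}\sup_Y|\alpha|^2+r^{-\oh}\lambda^4\int_Y|\psi|^2$ and $\sup_Y|\nabla_r\beta|^2\lesssim\sup_Y|\alpha|^2+r^{\oh}\lambda^4\int_Y|\psi|^2$ (Propositions \ref{prop_estimate_00} and \ref{prop_estimate_02}, proved by maximum-principle arguments applied to combinations such as $|\beta|^2+c\,r^{-1}|\alpha|^2$). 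You gesture at these in your "main obstacle" paragraph, but without them, and with the eigensection reduction as stated, the argument remains a plan rather than a proof.
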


Notice that Proposition \ref{prop_beta_estimate_00}(i) only holds for an \emph{individual} eigensection.  A generic element in $\CV(r,\lambda)$ is a linear combination of eigensections.  What follows is a modified version.

\begin{lem}\label{lem_CV_beta}
There exists a constant $c_2$ determined by the contact form $a$, the conformally adapted metric $\dd s^2$ and the connection $A_0$ such that:  for any $r\geq c_2$ and $1\leq\lambda\leq\oh r^\oh$,
\begin{align*}
\int_Y |\beta|^2 + r^{-1}\int_Y|\nabla_r\beta|^2 \leq c_2r^{-1}\lambda^2 \int_Y |\alpha|^2
\end{align*}
for any $\psi=(\alpha,\beta)\in\CV(r,\lambda)\subset\CC^\infty(Y;E\oplus EK^{-1})$.
\end{lem}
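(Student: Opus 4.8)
The plan is to rerun the Weitzenb\"ock argument used to prove Proposition \ref{prop_beta_estimate_00}(i), replacing the single eigenvalue identity by the only bound available on all of $\CV(r,\lambda)$. Fix an $L^2$-orthonormal basis $\{\psi_j\}$ of $\CV(r,\lambda)$ consisting of eigensections, $D_r\psi_j=\nu_j\psi_j$ with $|\nu_j|\leq\lambda$, and write $\psi=\sum_jc_j\psi_j$. Then $D_r^k\psi=\sum_jc_j\nu_j^k\psi_j$, so $\|D_r^k\psi\|_{L^2}\leq\lambda^k\|\psi\|_{L^2}$ for $k=1,2$. Writing $\Pi$ for the fibrewise orthogonal projection of $\BS$ onto $EK^{-1}$, so that $\Pi\psi=\beta$ and $\Pi^*=\Pi$, this gives the key estimate
\begin{align*}
\big|\langle D_r^2\psi,\beta\rangle_{L^2}\big|=\big|\langle\Pi D_r^2\psi,\psi\rangle_{L^2}\big|\leq\|D_r^2\psi\|_{L^2}\,\|\psi\|_{L^2}\leq\lambda^2\Big(\int_Y|\alpha|^2+\int_Y|\beta|^2\Big)~.
\end{align*}

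Next I apply the Weitzenb\"ock formula (\ref{eqn_weitzenbock_00}) to $\psi$ itself and repeat, essentially word for word, the manipulation from the proof of Proposition \ref{prop_beta_estimate_00}(i): pair with $\beta$ and integrate over $Y$; use $\langle\psi,\beta\rangle=|\beta|^2$ pointwise and integrate $\langle\nabla_r^*\nabla_r\psi,\beta\rangle$ by parts; use $*\dd a=2\Omega^{-1}a$ and $\Omega\leq\tfrac{10}{9}$ so that the $-ir\cl(\tfrac{\dd a}{2})$ term contributes at least $\tfrac{81}{100}r\int_Y|\beta|^2$; bound the remaining curvature terms by $c\int_Y(|\alpha|+|\beta|)|\beta|$; and control the cross term via $|\langle\nabla_r\alpha,\nabla_r\beta\rangle|\leq c(|\nabla_r\alpha|\,|\beta|+|\alpha|\,|\nabla_r\beta|)$ together with $\|\nabla_r\psi\|_{L^2}^2\leq cr\|\psi\|_{L^2}^2$ (the latter is Weitzenb\"ock paired with $\psi$, now using $\lambda^2\leq\tfrac14 r$ in place of $\lambda_\psi^2<\tfrac34 r$). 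The outcome is exactly the inequality obtained in the individual case,
\begin{align*}
\langle D_r^2\psi,\beta\rangle_{L^2}\ \geq\ \int_Y\Big(\big(\tfrac{81}{100}r-c\big)|\beta|^2+\tfrac12|\nabla_r\beta|^2-c\,|\alpha|^2\Big)~,
\end{align*}
with a constant $c$ depending only on $a$, $\dd s^2$ and $A_0$; the one difference is that the left-hand side is no longer $\lambda_\psi^2\int_Y|\beta|^2$ but is merely controlled, by the first paragraph, by $\lambda^2\int_Y|\alpha|^2+\lambda^2\int_Y|\beta|^2$.

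Combining the two displays and moving the $|\beta|^2$-terms to the left yields
\begin{align*}
\tfrac12\int_Y|\nabla_r\beta|^2+\big(\tfrac{81}{100}r-\lambda^2-c\big)\int_Y|\beta|^2\ \leq\ (\lambda^2+c)\int_Y|\alpha|^2~.
\end{align*}
For $r\geq c_2$ the coefficient of $\int_Y|\beta|^2$ is at least $(\tfrac{81}{100}-\tfrac14)r-c\geq\tfrac12 r$ since $\lambda^2\leq\tfrac14 r$, and $\lambda^2+c\leq c_2\lambda^2$ since $\lambda\geq1$; dividing through by $\tfrac12 r$ gives $\int_Y|\beta|^2+r^{-1}\int_Y|\nabla_r\beta|^2\leq c_2\lambda^2r^{-1}\int_Y|\alpha|^2$, which is the assertion. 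There is no serious obstacle here: the whole content is that passing from a single eigensection to a linear combination costs only the factor $\lambda^2$, and this factor enters solely because $\langle D_r^2\psi,\beta\rangle$ can be estimated only by $\lambda^2\|\psi\|^2$ and not, as in the individual case, by $\lambda^2\|\beta\|^2$ — forcing one to retain the term $\lambda^2\int_Y|\alpha|^2$, which is precisely the right-hand side of the lemma.
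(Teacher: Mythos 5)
Your proposal is correct and follows essentially the same route as the paper: the paper also uses $\int_Y|D_r^k\psi|^2\leq\lambda^{2k}\int_Y|\psi|^2$ (with $k=2$) in place of the eigenvalue identity, reruns the Weitzenb\"ock computation of Proposition \ref{prop_beta_estimate_00}(i) paired against $\beta$, and absorbs the resulting $\lambda^2\int_Y|\beta|^2$ term using $\lambda^2\leq\tfrac14 r$. The only cosmetic difference is that the paper keeps the bound in the form $\int_Y|D_r^2\psi||\beta|\leq\lambda^2\|\psi\|_{L^2}\|\beta\|_{L^2}$ before applying Young's inequality, whereas you bound the pairing directly by $\lambda^2\|\psi\|_{L^2}^2$; both lead to the same conclusion.
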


\begin{proof}
For any $k\in\BN$, consider the $k$th power of the Dirac operator $D_r$.  If $\psi$ belongs to $\CV(r,\lambda)$, $D_r^k\psi$ also belongs to $\mathcal{V}(r,\lambda)$ for any $k\in\BN$.  By writing $\psi$ as a linear combination of $L^2$-orthonormal eigenbases, it is not hard to see that
\begin{align}\label{eqn_estimate_00}
\int_Y |D_r^k\psi|^2 \leq \lambda^{2k} \int_ Y |\psi|^2 ~.
\end{align}
In particular, $\int_Y|D_r^2\psi|^2\leq\lambda^4\int_Y|\psi|^2$ for any $\psi\in\CV(r,\lambda)$.  With the same computation as that in the proof of Proposition \ref{prop_beta_estimate_00}(i),
\begin{align*}
\int_Y \big( (\frac{81}{100}r - c_3)|\beta|^2 + \oh|\nabla_r\beta|^2 - c_3|\alpha|^2 \big) &\leq \int_Y|D_r^2\psi||\beta| \\
&\leq \lambda^2\big(\int_Y(|\alpha|^2+|\beta|^2)\big)^\oh\,\big(\int_Y|\beta|^2\big)^\oh \\
&\leq 1000\lambda^2\int_Y|\alpha^2| + \frac{11}{10}\lambda^2\int_Y|\beta|^2 ~,
\end{align*}
and the lemma follows.
\end{proof}

%%%%%%%%%%%%%%%%%%
\subsection{Corollaries of the sup-norm estimate}
Before getting into the proof, here are some useful consequences of Theorem \ref{thm_point_estimate_00}.
\begin{cor}\label{cor_main_thm_00}
There exists a constant $c_1$ determined by the contact form $a$, the conformally adapted metric $\dd s^2$ and the connection $A_0$ with the following significance.
\begin{enumerate}
\item Suppose that $r\geq c_1$ and $1\leq\lambda\leq \oh r^\oh$.  Let $\{\psi_j\}_{j\in J}$ be an orthonormal eigenbasis for $\mathcal{V}(r,\lambda)$.  Then,
\begin{align*}
\sum_{j\in J} |\psi_j(q)|^2 &\leq c_1r\lambda
\end{align*}
for any $q\in Y$.  Its integration over $Y$ says that $\dim \mathcal{V}(r,\lambda) \leq c_1r\lambda$.
\item For any $r\geq c_1$, the spectral flow from $r-1$ to $r$ is less than or equal to $c_1r$. Namely, $\sfa_a(r)-\sfa_a(r-1)\leq c_1r$.
\end{enumerate}
\end{cor}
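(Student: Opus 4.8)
The plan is to obtain assertion~(i) directly from the sup-norm bound of Theorem~\ref{thm_point_estimate_00}, and then to feed assertion~(i) into the eigenvalue-derivative bounds of Proposition~\ref{prop_beta_estimate_00} to get assertion~(ii).

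For assertion~(i), fix $q\in Y$ and regard evaluation at $q$ as a linear map $\ev_q\colon\CV(r,\lambda)\to\BS_q$, $\psi\mapsto\psi(q)$, where $\CV(r,\lambda)$ carries the $L^2$ inner product.  Theorem~\ref{thm_point_estimate_00} says precisely that $|\ev_q\psi|^2\le\sup_Y|\psi|^2\le c_1 r\lambda\int_Y|\psi|^2$, so the operator norm satisfies $\|\ev_q\|^2\le c_1 r\lambda$.  For an $L^2$-orthonormal eigenbasis $\{\psi_j\}_{j\in J}$ of $\CV(r,\lambda)$ one has $\sum_{j\in J}|\psi_j(q)|^2=\sum_{j\in J}|\ev_q\psi_j|^2=\|\ev_q\|_{\mathrm{HS}}^2$, and since $\ev_q$ has rank at most $\dim_{\BC}\BS_q=2$, its Hilbert--Schmidt norm squared is at most $2\|\ev_q\|^2\le 2c_1 r\lambda$; absorbing the factor $2$ into the constant gives the pointwise bound.  (Alternatively, one avoids Hilbert--Schmidt norms by testing Theorem~\ref{thm_point_estimate_00} against the explicit element $\sum_j\overline{\langle\psi_j(q),\xi\rangle}\,\psi_j\in\CV(r,\lambda)$ for unit vectors $\xi\in\BS_q$ and invoking Cauchy--Schwarz.)  Integrating over $Y$ and using $\int_Y\sum_{j}|\psi_j|^2=\sum_j\int_Y|\psi_j|^2=\dim\CV(r,\lambda)$ then yields $\dim\CV(r,\lambda)\le c_1 r\lambda\,\vol(Y)$, with $\vol(Y)$ absorbed into the constant.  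I expect no difficulty here; it is the standard on-diagonal argument.

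For assertion~(ii), the plan is to bound the number of zero crossings of the family $\{D_s\}_{s\in[r-1,r]}$ by $\dim\CV(r,1)$ and then invoke assertion~(i) with $\lambda=1$.  Because $D_s=D_{A_0-isa}$ depends affinely on $s$ (its $s$-derivative is $-\tfrac{i}{2}\cl(a)$, cf. (\ref{eqn_basic_est_01})), the spectrum organizes into real-analytic branches $s\mapsto\lambda_\alpha(s)$ on $[r-1,r]$, each obeying $|\lambda_\alpha'|\le\tfrac59$ by (\ref{eqn_basic_est_02}).  If $\lambda_\alpha(s_0)=0$ for some $s_0\in(r-1,r]$, then $|\lambda_\alpha(s)|\le\tfrac59|s-s_0|<1$ on all of $[r-1,r]$, hence $|\lambda_\alpha(s)|^2<\tfrac34 s$ there once $r$ is large; Proposition~\ref{prop_beta_estimate_00}(ii) then gives $\lambda_\alpha'(s)\ge\tfrac{9}{20}-c_1 s^{-1}>0$ on $[r-1,r]$, so $\lambda_\alpha$ is strictly increasing there, has exactly one zero, and $\lambda_\alpha(r)\in(0,\tfrac59]$.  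Consequently the number of such crossing branches is at most the number of eigenvalues of $D_r$ in $(0,\tfrac59]\subset[-1,1]$ counted with multiplicity, which is $\le\dim\CV(r,1)\le c_1 r$ by assertion~(i).  Finally, once $r$ exceeds the threshold of Proposition~\ref{prop_beta_estimate_00}(ii) all zero crossings are positive, so $\sfa_a(r)-\sfa_a(r-1)$ equals this crossing count (using path-independence and additivity of the spectral flow); choosing $c_1$ large enough to dominate the thresholds of Theorem~\ref{thm_point_estimate_00}, of assertion~(i) with $\lambda=1$, and of Proposition~\ref{prop_beta_estimate_00}(ii) gives $\sfa_a(r)-\sfa_a(r-1)\le c_1 r$ for $r\ge c_1$.

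The only slightly delicate point, and what I would expect to be the main obstacle, is the bookkeeping around the eigenvalue branches: one needs the real-analytic branch decomposition (available because the family is affine in $s$) together with the \emph{strict} monotonicity $\lambda_\alpha'>0$ near $0$ from Proposition~\ref{prop_beta_estimate_00}(ii), so that each branch crosses $0$ at most once and its value at $s=r$ faithfully records it among the eigenvalues of $D_r$ in $(0,\tfrac59]$.  Everything else — merging the various constant thresholds into a single $c_1$, and the identification of $\sfa_a(r)-\sfa_a(r-1)$ with the crossing count on $[r-1,r]$ — is routine.
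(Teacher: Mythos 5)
Your proposal is correct and follows essentially the same route as the paper: part (i) is the paper's evaluation-functional argument (your Hilbert--Schmidt/rank-two packaging is just a repackaging of the paper's two component functionals $\ev_q^1,\ev_q^2$), and part (ii) bounds the crossings on $[r-1,r]$ by the number of eigenvalues of $D_r$ of magnitude less than $1$, i.e.\ by $\dim\CV(r,1)\leq c_1 r$. The only difference is that the paper gets by with continuous, piecewise smooth eigenvalue branches and the Lipschitz bound $|\lambda'|\leq\frac{5}{9}$ from (\ref{eqn_basic_est_02}) alone (no real-analyticity or strict monotonicity from Proposition \ref{prop_beta_estimate_00}(ii) is needed for the one-sided estimate), which also sidesteps your minor endpoint bookkeeping at $s_0=r$.
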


\begin{proof}
(\emph{Assertion} (i))\;	For any $q\in Y$, choose isometric identifications $E|_q\cong\BC$ and $EK^{-1}|_q\cong\BC$.  With these identifications, write $\psi_j(q) = (\alpha_j(q),\beta_j(q))\in\BC^2$, and introduce the following linear maps on $L^2(Y;\BS)$
\begin{align*} \begin{array}{cccl}
&L^2(Y;\BS) &\to &\mathbb{C} \\
\ev_q^1: &\psi &\mapsto &\int_Y\langle\psi(p),\sum_{j\in J}\bar{\alpha}_j(q)\psi_j(p)\rangle\dd p ~; \\
\ev_q^2: &\psi &\mapsto &\int_Y\langle\psi(p),\sum_{j\in J}\bar{\beta}_j(q)\psi_j(p)\rangle\dd p ~.
\end{array}\end{align*}
It is a standard fact in functional analysis that $\ev_q^1$ and $\ev_q^2$ are bounded linear functionals, and the operator norms are equal to $(\sum_{j\in J}|\alpha_j(p)|^2)^\oh$ and $(\sum_{j\in J}|\beta_j(p)|^2)^\oh$, respectively.

Let $\Pi_\lambda:L^2(Y;\BS)\to\CV(r,\lambda)$ be the $L^2$-orthogonal projection.  For any $\psi\in\CC^\infty(Y;\BS)$, the linear functionals are equal to
\begin{align*}
\ev_q^1(\psi) = (\pr_1\circ\ev_q\circ\Pi_\lambda)(\psi) \qquad\text{and}\qquad
\ev_q^2(\psi) = (\pr_2\circ\ev_q\circ\Pi_\lambda)(\psi)
\end{align*}
where $\ev_q$ is the evaluation map at $q$, $\pr_1$ is the projection onto the $E$ component, and $\pr_2$ is the projection onto the $EK^{-1}$ component.  According to Theorem \ref{thm_point_estimate_00},
\begin{align*}
\big| \ev_q^1(\psi) \big|^2 + \big| \ev_q^2(\psi) \big|^2 &\leq \sup_Y |\Pi_\lambda(\psi)|^2 \leq c_1r\lambda \int_Y|\Pi_\lambda(\psi)|^2 \\
&\leq c_1r\lambda \int_Y |\psi|^2 ~.
\end{align*}
It follows that the operator norm of $\ev_q^1$ and $\ev_q^2$ are no greater than $(c_1r\lambda)^\oh$.  This completes the proof of Assertion (i).

\medskip

(\emph{Assertion} (ii))\;	Suppose that $\{r_k\}_{k=1}^K$ are where the zero crossing happens between $r-1$ and $r$ (counting multiplicities).  According to \cite[\S5.1]{ref_Taubes_SW_Weinstein}, one can assign for each $k$ a continuous, piecewise smooth function $\lambda_k(s)$ of $s\in[r-1,r]$ such that
\begin{itemize}
\item $\lambda_k(s)$ is an eigenvalue $D_s$ for $s\in[r-1,r]$, and $\lambda_k(r_k) = 0$;
\item moreover, $\{\lambda_k(s)\}_{k=1}^K$ are disjoint eigenvalues (counting multiplicities) of $D_s$ for any $s\in[r-1,r]$.
\end{itemize}
There is no canonical way to do it, but any method will suffice.  It follows from (\ref{eqn_basic_est_02}) that $\lambda_k(s)$ always belongs to $(-1,1)$ for $s\in[r-1,r]$.  Thus, $K<\dim\CV(r,1)$, which is less than $c_1r$ by Assertion (i).
\end{proof}

When the metric is adapted rather than conformally adapted, the dimension estimate of Corollary \ref{cor_main_thm_00}(i) can be refined into a density version.  For an adapted metric, the slope estimate of Proposition \ref{prop_beta_estimate_00}(ii) is refined to be
\begin{align}\label{eqn_slope_01}
|\lambda'(r)-\oh|\leq c_2r^{-1}
\end{align}
provided $\lambda(r)$ is an eigenvalue of $D_r$ with $|\lambda(r)|^2\leq\frac{3}{4}r$.  This is proved in \cite[Proposition 3.1(ii)]{ref_Ts1}.  Notice that the leading order term of the slope is exactly $\oh$.

\begin{cor}\label{cor_eigen_distribution}
Suppose that the metric is adapted, namely $\Omega\equiv1$.  There exists a constant $c_3$ determined by the contact form $a$, the conformally adapted metric $\dd s^2$ and the connection $A_0$ such that the following holds.  Suppose that $r\geq c_3$ and $\lambda_-,\lambda_+\in[\oh r^\oh,\oh r^\oh]$ satisfying $0<\lambda_+-\lambda_-\leq2$.  Then, the total number of eigenvalues (counting multiplicity) of $D_r$ within $[\lambda_-,\lambda_+]$ is no greater than $c_3r$.
\end{cor}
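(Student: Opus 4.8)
The plan rests on the refined slope estimate (\ref{eqn_slope_01}), which for an adapted metric says that every continuous eigenvalue path $\lambda(s)$ of $D_s$ obeys $|\lambda'(s)-\oh|\le c_2 s^{-1}$ so long as $|\lambda(s)|^2\le\frac{3}{4}s$. Two such paths followed over a parameter interval contained in $[\oh r,2r]$ therefore drift apart by at most $2c_2\int t^{-1}\,\dd t\le 2c_2\log 2$, a fixed constant; a band of eigenvalues of width $\le 2$ at the parameter value $r$ stays a band of bounded width as the parameter varies, and by varying the parameter one can slide the band until it is centered near $0$. There the dimension bound of Corollary \ref{cor_main_thm_00}(i) applies with a value of $\lambda$ that is bounded independently of $r$, and it returns an $\CO(r)$ estimate.

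Concretely, I would first invoke \cite[\S5.1]{ref_Taubes_SW_Weinstein} to organize, over any compact parameter interval, the spectrum of $D_s$ into continuous, piecewise-smooth functions $\{\lambda_k(s)\}$ that are mutually disjoint counting multiplicities. Let $\mu=\oh(\lambda_-+\lambda_+)$ and $w=\oh(\lambda_+-\lambda_-)\le 1$ be the center and half-width of the band, and set $s_0=r-2\mu$; the hypothesis forces $|\mu|\le\oh r^\oh$, hence $\oh r\le s_0\le 2r$ once $r$ is large. The quantity to be estimated is the number of indices $k$ with $\lambda_k(r)\in[\lambda_-,\lambda_+]$, counted with multiplicity.

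For each such $k$, follow $\lambda_k$ from $s=r$ to $s=s_0$. A continuity (bootstrap) argument shows that $|\lambda_k(s)|^2<\frac{3}{4}s$ along the whole interval: the a priori bound furnished by (\ref{eqn_slope_01}) keeps $|\lambda_k(s)|\le|\mu|+1+c_2\log 2\le\oh r^\oh+\CO(1)$, and since $s\ge\oh r$ this is strictly below the threshold, so (\ref{eqn_slope_01}) is available throughout. Integrating $\lambda_k'$ and using $\oh(s_0-r)=-\mu$ gives $\lambda_k(s_0)=\lambda_k(r)-\mu+\theta_k$ with $|\theta_k|\le c_2|\log(s_0/r)|\le c_2\log 2$, and since $\lambda_k(r)\in[\mu-w,\mu+w]$ this forces $|\lambda_k(s_0)|\le w+c_2\log 2\le 1+c_2\log 2=:C$. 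Because $\{\lambda_k(s_0)\}_k$ is disjoint counting multiplicities, the selected sub-collection accounts for that many eigenvalues of $D_{s_0}$, counted with multiplicity, all lying in $[-C,C]$; hence the number we want is at most $\dim\CV(s_0,C)$. Finally $C$ is a fixed constant with $1\le C\le\oh s_0^\oh$ for $r$ large, so Corollary \ref{cor_main_thm_00}(i) yields $\dim\CV(s_0,C)\le c_1 s_0 C\le 2c_1(1+c_2\log 2)\,r$, which is the claimed bound after renaming the constant.

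The one point requiring care is the bookkeeping that keeps $s_0$ comparable to $r$, so that the drift term $c_2|\log(s_0/r)|$ remains bounded; this is precisely where the restriction that $\lambda_\pm$ have magnitude $\CO(r^\oh)$ enters, and it explains why the corollary concerns only eigenvalues of size $\CO(r^\oh)$. Every other ingredient --- the existence of the eigenvalue functions, the slope bound (\ref{eqn_slope_01}), and the dimension estimate of Corollary \ref{cor_main_thm_00}(i) --- is already available, so no further analysis is needed.
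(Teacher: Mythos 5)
Your argument is correct and is essentially the paper's: both follow the eigenvalue curves in the parameter using the refined slope estimate (\ref{eqn_slope_01}) (justified along the paths by the same bootstrap, since $|\lambda|\leq \oh r^{\oh}+\CO(1)$ keeps $|\lambda|^2$ safely below $\frac{3}{4}s$ for $s$ comparable to $r$) so as to transport the band $[\lambda_-,\lambda_+]$ of eigenvalues of $D_r$ back to a bounded neighborhood of zero, and then invoke the $\CO(r\lambda)$ count coming from Theorem \ref{thm_point_estimate_00}. The only difference is the endgame: the paper pins the zero-crossing times of each curve and quotes Corollary \ref{cor_main_thm_00}(ii) (spectral flow over a unit parameter interval is at most $c_1r$), whereas you land the whole band at the single parameter $s_0=r-2\mu$ and apply Corollary \ref{cor_main_thm_00}(i) there; since part (ii) is itself deduced from part (i) by the same translation mechanism, the two finishes are interchangeable.
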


\begin{proof}
Consider the case when $\lambda_-\geq0$.  Other cases can be proved by the same argument.  Suppose that $\lambda_-=\lambda_1\leq\lambda_2\leq\cdots\leq\lambda_L=\lambda_+$ are all the eigenvalues of $D_r$ within $[\lambda_-,\lambda_+]$.  For each $l\in\{1,2,\cdots,L\}$, assign a continuous, piecewise smooth function $\lambda_l(s)$ for $s\in[r-\frac{21}{10}\lambda_+,r]$ such that
\begin{itemize}
\item $\lambda_l(s)$ is an eigenvalue of $D_s$ for $s\in[r-\frac{21}{10}\lambda_+,r]$, and $\lambda_l(r) = \lambda_l$;
\item moreover, $\{\lambda_l(s)\}_{l=1}^L$ are disjoint eigenvalues (counting multiplicities) of $D_s$ for any $s\in[r-\frac{21}{10}\lambda_+,r]$.
\end{itemize}
There is no canonical way to do it, but any method will suffice.

We claim that $|\lambda_l(s)|^2<\frac{3}{4}s$ for any $s\in[r-\frac{21}{10}\lambda_+,r]$ and any $l\in\{1,2,\cdots,L\}$.  Due to (\ref{eqn_basic_est_01}), $|\lambda_l(s)-\lambda_l|\leq\frac{1}{2}(r-s)$ for any $s\in[r-\frac{21}{10}\lambda_+,r]$.  It follows that $|\lambda_l(s)|\leq\frac{31}{20}\lambda_+$, and
$$ |\lambda_l(s)|^2\leq\frac{961}{1600}r\leq\frac{3}{4}(r-3\lambda_+)\leq\frac{3}{4}s ~.$$
Hence, (\ref{eqn_slope_01}) applies to $\lambda_l(s)$.  According to the intermediate value theorem, there is some
$$  s_l\in[r-(\oh+c_4r^{-1})\lambda_l,r-(\oh-c_4r^{-1})\lambda_l]  $$ such that $\lambda_l(s_l) = 0$.  It follows that
$$  L \leq \sfa_a(r-(\oh-c_4r^{-1})\lambda_-) - \sfa_a(r-(\oh+c_4r^{-1})\lambda_+) ~.  $$  Since $|\lambda_\pm|\leq\frac{1}{3}r^\oh$ and $\lambda_+ - \lambda_-\leq2$, the corollary follows from Corollary \ref{cor_main_thm_00}(ii).
\end{proof}

%%%%%%%%%%%%%%%%%%
\subsection{Pointwise estimate on $\beta$}
The rest of this section is devoted to the proof of Theorem \ref{thm_point_estimate_00}.  Suppose that $\psi=(\alpha,\beta)$ is an element of $\CV(r,\lambda)$ for some $\lambda\leq\oh r^\oh$.  Proposition \ref{prop_beta_estimate_00}(i) says that the $L^2$-norm of $\beta$ is small.  The purpose of this subsection is to derive a pointwise estimate on $\beta$.

The following lemma is a preliminary version of Theorem \ref{thm_point_estimate_00}.

\begin{lem}\label{lem_estimate_00}
There exists a constant $c_6$ determined by the contact form $a$, the conformally adapted metric $\dd s^2$ and the connection $A_0$ such that the following holds.  Suppose that $r\geq c_6$ and $\lambda\leq\oh r^\oh$, then
\begin{align*}  \sup_Y |\psi|^2 \leq c_6r^{\frac{3}{2}}\int_Y |\psi|^2  \end{align*}
for any $\psi\in\mathcal{V}(r,\lambda)$.  On the other hand, if $\lambda\geq\oh r^\oh$, then
\begin{align*}  \sup_Y |\psi|^2 \leq c_6\lambda^{3}\int_Y |\psi|^2 \end{align*}
for any $\psi\in\mathcal{V}(r,\lambda)$.
\end{lem}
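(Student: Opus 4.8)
The plan is to establish the sup-norm bound by a standard elliptic-estimate-plus-rescaling argument, exploiting that $\mathcal{V}(r,\lambda)$ is (approximately) invariant under $D_r$ and that $D_r^2$ is, up to lower order and the curvature of $a$, of Schr\"odinger type with a potential growing like $r$. First I would recall from \eqref{eqn_estimate_00} that $\int_Y|D_r^k\psi|^2\leq\lambda^{2k}\int_Y|\psi|^2$ for every $k$, so that $\psi$ is controlled in every Sobolev norm built from powers of $D_r$; the issue is that these come with the operator $D_r$, whose coefficients blow up with $r$, so a naive Sobolev embedding loses too many powers of $r$. The remedy is to rescale: fix $q\in Y$, work in a geodesic ball of radius $\sim r^{-1/2}$ about $q$, and introduce rescaled coordinates $x=r^{1/2}(\cdot)$ in which the metric is $C^1$-close to the Euclidean metric and $D_r^2$ becomes, after dividing by $r$, an operator of the form $-\Delta + (\text{bounded potential}) + r^{-1/2}(\text{lower order})$. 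In these coordinates the operator is uniformly elliptic with coefficients bounded independently of $r$ on the unit ball.

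Next I would run the standard local elliptic bootstrap for the rescaled operator. Writing $\tilde\psi$ for $\psi$ in rescaled coordinates and $\tilde D$ for $r^{-1/2}D_r$, we have $\tilde D\tilde\psi=\tilde\nu\tilde\psi$ with $|\tilde\nu|=r^{-1/2}|\nu|\leq r^{-1/2}\lambda$, and more generally $\tilde D^k\tilde\psi$ has $L^2$-norm on the unit ball at most $(r^{-1/2}\lambda)^k$ times the $L^2$-norm of $\tilde\psi$ on a slightly larger ball (the localization producing harmless commutator terms of size $O(r^{-1/2})$). Interior elliptic regularity for $\tilde D$ (equivalently for the Laplace-type operator $\tilde D^2$) then gives, for $k$ large enough depending only on $\dim Y=3$, a bound $\|\tilde\psi\|_{C^0(B_{1/2})}\leq C\sum_{j\leq k}\|\tilde D^j\tilde\psi\|_{L^2(B_1)}$. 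Because each application of $\tilde D$ to $\tilde\psi$ only costs a factor $r^{-1/2}\lambda\leq\tfrac12$ when $\lambda\leq\tfrac12 r^{1/2}$, the right-hand side is bounded by $C\|\tilde\psi\|_{L^2(B_1)}$. Undoing the rescaling, the ball $B_1$ corresponds to a ball of radius $\sim r^{-1/2}$ in $Y$, whose volume is $\sim r^{-3/2}$, so $\|\tilde\psi\|_{L^2(B_1)}^2 = \int_{B_{r^{-1/2}}(q)}|\psi|^2\cdot r^{3/2}\leq r^{3/2}\int_Y|\psi|^2$, and $|\psi(q)|^2=|\tilde\psi(0)|^2\leq C r^{3/2}\int_Y|\psi|^2$. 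Taking the supremum over $q$ gives the first assertion. The second assertion, for $\lambda\geq\tfrac12 r^{1/2}$, is handled the same way but rescaling by $\lambda$ rather than $r^{1/2}$: then $r^{-1}D_r^2$ is replaced by $\lambda^{-2}D_r^2$, whose potential term is $r/\lambda^2\leq 4$ and hence still bounded, each $\tilde D=\lambda^{-1}D_r$ costs only a factor $\lambda^{-1}\cdot\lambda=1$, and the ball of radius $\sim\lambda^{-1}$ has volume $\sim\lambda^{-3}$, giving $\sup_Y|\psi|^2\leq C\lambda^3\int_Y|\psi|^2$.

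I expect the main obstacle to be bookkeeping in the rescaling argument, specifically verifying that after dividing by $r$ (resp.\ $\lambda^2$) the operator really does have coefficients bounded uniformly in $r$ on the unit ball: this uses that $*\,\dd a=2\Omega^{-1}a$ is fixed, that the curvature $F_{A_0}$ and scalar curvature $\kappa$ are $r$-independent so their contributions to the Weitzenb\"ock formula \eqref{eqn_weitzenbock_00} become $O(r^{-1})$ after rescaling, and that the connection $\nabla_r=\nabla_{A_0}-ira$ contributes a first-order term whose coefficient, after the substitution $x=r^{1/2}(\cdot)$ and division by $r^{1/2}$, is $r^{1/2}\cdot r^{-1/2}a=a$, i.e.\ bounded. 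One must also be careful that the localization cutoff, which necessarily varies on the scale $r^{-1/2}$, produces commutators $[D_r,\chi]$ of size $O(r^{1/2})$, so that $r^{-1/2}[D_r,\chi]$ is bounded and the iterated estimates genuinely close with only $O(r^{-1/2})$ errors at each stage; this is routine but is where the constant $c_6$ is actually generated. No use of Proposition \ref{prop_beta_estimate_00} or Lemma \ref{lem_CV_beta} is needed for this lemma — those enter only in the subsequent refinement from the exponent $\tfrac32$ down to the sharp $r\lambda$ in Theorem \ref{thm_point_estimate_00}, where one separates $\alpha$ from the much smaller $\beta$ and applies the Cauchy integral formula as sketched in the introduction.
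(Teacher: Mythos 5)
There is a genuine gap, and it sits exactly at the point you flagged as "bookkeeping": the claim that the rescaled operator has coefficients bounded uniformly in $r$. Write $D_r = D_{A_0} - \tfrac{i}{2}r\,\cl(a^{\sharp})$ (up to the determinant-line normalization): the perturbation is a \emph{zeroth-order} endomorphism of pointwise norm $\sim r/2$, and zeroth-order terms do not gain anything under the substitution $x = r^{1/2}(\cdot)$ --- only honest derivatives pick up the factor $r^{1/2}$. Hence $\tilde D = r^{-1/2}D_r$ has, in your rescaled unit ball, a zeroth-order coefficient of size $\sim r^{1/2}$, not $O(1)$; the identity ``$r^{1/2}\cdot r^{-1/2}a = a$'' implicitly treats the Clifford contraction of $-\tfrac{i}{2}ra$ as if it rescaled like a derivative, which it does not (the $1$-form components shrink by $r^{-1/2}$, but so does the metric dual frame against which they are contracted, and the two effects cancel). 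With this unbounded potential the interior elliptic constants grow in $r$, each pass of the bootstrap costs a factor $r^{1/2}$, and one lands at a bound of the shape $\sup_Y|\psi|^2\leq c\,r^{3/2+k}\int_Y|\psi|^2$ with $k>0$ rather than the stated $c_6r^{3/2}$. The same defect appears in your second case: $\lambda^{-1}\cdot\tfrac{r}{2}\leq r^{1/2}$ is again unbounded, even though the Weitzenb\"ock potential $r/\lambda^2$ is fine.

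The argument is repairable, and the repair is the paper's own device from \S\ref{sec_local_Dr} and the appendix: before rescaling, perform a local gauge transformation $\psi\mapsto e^{-\frac{i}{2}r(z+\cdots)}\psi$ in an adapted chart, where $a = \dd z + \CO(\rho)$; this replaces the size-$\tfrac{r}{2}$ term by one of pointwise size $\CO(r\rho)$, which on the ball $\rho\lesssim r^{-1/2}$ (resp.\ $\rho\lesssim\lambda^{-1}$) and after dividing by $r^{1/2}$ (resp.\ $\lambda$) is bounded together with its derivatives; since $|\psi|$ and $|D_r^k\psi|$ are gauge invariant, the conclusion is unaffected, and your bootstrap then closes. (Two smaller points: the cutoff commutator satisfies $r^{-1/2}|[D_r,\chi]| = O(1)$, not $O(r^{-1/2})$, though you can dispense with cutoffs entirely because (\ref{eqn_estimate_00}) is a global bound that simply restricts to the small ball; and the interior estimate needs $C^1$ control of the gauged coefficients, which the adapted chart provides.) For comparison, the paper's proof avoids gauge questions altogether: it derives from the Weitzenb\"ock formula (\ref{eqn_weitzenbock_00}) a differential inequality for the gauge-invariant scalar $|\chi\psi|^2$, bounds the value at the maximum point by the Green's function of $\dd^*\dd$, and only then optimizes the cutoff radius to $\epsilon_1\sim r^{-1/2}$ (resp.\ $\lambda^{-1}$), using nothing beyond $\int_Y|D_r^2\psi|^2\leq\lambda^4\int_Y|\psi|^2$; you are right that Proposition \ref{prop_beta_estimate_00} and Lemma \ref{lem_CV_beta} are not needed at this stage.
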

\begin{proof}
Suppose that the maximum of $|\psi|$ is achieved at $p_0\in Y$.  Let $\chi$ be a standard cut-off function which depends only on the distance $\rho$ to $p_0$ and
\begin{align*} \begin{cases}
\chi(\rho) = 1 &\text{when }\rho\leq\epsilon_1 ~, \\
\chi(\rho) = 0 &\text{when }\rho\geq2\epsilon_1 ~.
\end{cases} \end{align*}
Here, $\epsilon_1$ is a small number less than one-tenth of the injectivity radius, and the precise value will be chosen later.  Due to the Weitzenb\"ock formula (\ref{eqn_weitzenbock_00}), $\chi\psi$ satisfies the following differential inequality:
\begin{align*}
\dd^*\dd |\chi\psi|^2 &\leq \chi^2\dd^*\dd|\psi|^2 + 8\chi|\dd\chi||\psi||\nabla_r\psi| + |\dd^*\dd(\chi^2)||\psi|^2 \\
&\leq \big(2\chi^2\langle\nabla_r^*\nabla_r\psi,\psi\rangle - 2\chi^2|\nabla_r\psi|^2\big) + \big(2\chi^2|\nabla_r\psi|^2 + 8|\dd\chi|^2|\psi|^2\big) \\
&\quad + 2(\chi|\dd^*\dd\chi| + |\dd\chi|^2)|\psi|^2 \\
&\leq c_7r|\chi\psi|^2 + c_7(\chi|\dd^*\dd\chi|+|\dd\chi|^2)|\psi|^2 + 2|\chi\psi||\chi D_r^2\psi| ~.
\end{align*}
Let $B$ be the geodesic ball centered at $p_0$ with radius to be half of the injectivity radius.  The cut-off function $\chi$ vanishes on $\pl B$.  By the maximum principle, $|(\chi\psi)(p_0)|^2$ is less than the Green's function of $\dd^*\dd$ acting on the right-hand side.  Since the three dimensional Green's function is bounded from above by $c_8\rho^{-1}$,
\begin{align*}
|\psi(p_0)|^2 &\leq c_9r\int_B \rho^{-1}|\chi\psi|^2 + c_9\epsilon_1^{-3}\int_B |\psi|^2 + c_9\int_B \rho^{-1}|\chi\psi||\chi D_r^2\psi| \\
&\leq c_9r\int_B \rho^{-1}|\chi\psi|^2 + c_9\epsilon_1^{-3}\int_B |\psi|^2 + c_9\epsilon_2^{-1}\int_B \rho^{-2}|\chi\psi|^2 + c_9\epsilon_2\int_B |D_r^2\psi|^2
\end{align*}
for any $\epsilon_2>0$.  Since $\sup|\psi| = |\psi(p_0)|$ ,the first term can be estimated in terms of $\psi(p_0)$:
\begin{align*}
\int_B \rho^{-1}|\chi\psi|^2 &\leq |\psi(p_0)|^2\int_{\dist(\cdot,p_0)\leq\epsilon_1}\rho^{-1} + \epsilon_1^{-1}\int_{\dist(\cdot,p_0)\geq\epsilon_1}|\chi\psi|^2 \\
&\leq c_{10}\epsilon_1^2 |\psi(p_0)|^2 + c_{10}\epsilon_1^{-1} \int_B|\psi|^2 ~.
\end{align*}
By the same token, the third term is less than or equal to
\begin{align*}
\int_B \rho^{-2}|\chi\psi|^2 \leq c_{11}\epsilon_1 |\psi(p_0)|^2 + c_{11}\epsilon_1^{-2} \int_B|\psi|^2 ~.
\end{align*}
The above inequalities together with (\ref{eqn_estimate_00}) for $k=2$ imply that
\begin{align*}
|\psi(p_0)|^2 &\leq c_{12}(r\epsilon_1^2 + \epsilon_1\epsilon_2^{-1})|\psi(p_0)|^2 + c_{12}(r\epsilon_1^{-1} + \epsilon_1^{-3} + \epsilon_1^{-2}\epsilon_2^{-1} + \epsilon_2\lambda^4)\int_Y |\psi|^2 ~.
\end{align*}
By taking $\epsilon_1 = (100c_{12}r)^{-\oh}$ and $\epsilon_2 = c_{12}^{\oh}r^{-\oh}$, the first assertion of the lemma follows.  For the second assertion, take $\epsilon_1 = (1000c_{12})^{-\oh}\lambda^{-1}$ and $\epsilon_2 = c_{12}^{\oh}\lambda^{-1}$.
\end{proof}

Since $D_r^k\psi$ still belongs to $\CV(r,\lambda)$ for any $\psi\in\CV(r,\lambda)$, Lemma \ref{lem_estimate_00} applies to $D_r^k\psi$ as well.
\begin{cor}\label{cor_estimate_00}
There exists a constant $c_{13}$ determined by the contact form $a$, the conformally adapted metric $\dd s^2$ and the connection $A_0$ with the following significance.  Suppose that $r\geq c_{13}$ and $\lambda\leq \oh r^\oh$, then
\begin{align*}
\sup_Y |D_r\psi|^2 &\leq c_{13}r^{\frac{3}{2}}\lambda^2\int_Y|\psi|^2
&\text{and}& &\sup_Y |D_r^2\psi|^2 &\leq c_{13}r^{\frac{3}{2}}\lambda^4\int_Y|\psi|^2
\end{align*} for any $\psi\in\CV(r,\lambda)$.
\end{cor}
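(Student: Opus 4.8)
The plan is to deduce Corollary \ref{cor_estimate_00} directly from Lemma \ref{lem_estimate_00} and the $L^2$ bound (\ref{eqn_estimate_00}), exploiting the fact, already noted, that $\CV(r,\lambda)$ is invariant under $D_r$. Concretely, if $\psi=\sum_j a_j\psi_j$ is the expansion of $\psi$ in an $L^2$-orthonormal eigenbasis $\{\psi_j\}$ of $D_r$ with $D_r\psi_j=\nu_j\psi_j$ and $|\nu_j|\leq\lambda$ whenever $a_j\neq0$, then $D_r^k\psi=\sum_j\nu_j^k a_j\psi_j$ is again a linear combination of eigensections with eigenvalues of magnitude at most $\lambda$; hence both $D_r\psi$ and $D_r^2\psi$ lie in $\CV(r,\lambda)$.

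First I would apply the first assertion of Lemma \ref{lem_estimate_00}, which is available because $\lambda\leq\oh r^\oh$, to the section $D_r\psi\in\CV(r,\lambda)$, giving
\begin{align*}
\sup_Y|D_r\psi|^2 \leq c_6\, r^{\frac{3}{2}}\int_Y|D_r\psi|^2 ~.
\end{align*}
The right-hand side is then bounded using (\ref{eqn_estimate_00}) with $k=1$, namely $\int_Y|D_r\psi|^2\leq\lambda^2\int_Y|\psi|^2$, which yields exactly the first inequality of the corollary with $c_{13}=c_6$.

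Next I would run the identical argument with $D_r^2\psi$ in place of $D_r\psi$: Lemma \ref{lem_estimate_00} gives $\sup_Y|D_r^2\psi|^2\leq c_6\, r^{\frac{3}{2}}\int_Y|D_r^2\psi|^2$, and (\ref{eqn_estimate_00}) with $k=2$ gives $\int_Y|D_r^2\psi|^2\leq\lambda^4\int_Y|\psi|^2$; combining these produces the second inequality. Taking $c_{13}=c_6$ finishes the proof.

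There is no genuine obstacle here. The two points to check are the $D_r$-invariance of $\CV(r,\lambda)$, which is immediate from the spectral decomposition, and that the hypotheses $r\geq c_6$ and $\lambda\leq\oh r^\oh$ of Lemma \ref{lem_estimate_00} are inherited verbatim by $D_r\psi$ and $D_r^2\psi$. One could equally invoke the second assertion of Lemma \ref{lem_estimate_00}, since $\lambda\leq\oh r^\oh$ forces $\lambda^3\leq\frac{1}{8}r^{\frac{3}{2}}$, but using the first assertion is the cleaner route to the stated $r^{\frac{3}{2}}\lambda^2$ and $r^{\frac{3}{2}}\lambda^4$ forms.
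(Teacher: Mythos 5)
Your argument is correct and is exactly the paper's proof: apply Lemma \ref{lem_estimate_00} to $D_r\psi$ and $D_r^2\psi$, which lie in $\CV(r,\lambda)$ by the spectral decomposition, and then bound their $L^2$-norms via (\ref{eqn_estimate_00}) with $k=1,2$. Nothing further is needed.
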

\begin{proof}
It follows from Lemma \ref{lem_estimate_00} and (\ref{eqn_estimate_00}).
\end{proof}

The second assertion of Lemma \ref{lem_estimate_00} implies the following dimension bound of $\CV(r,\lambda)$ for $\lambda\geq \oh r^\oh$.
\begin{cor}\label{cor_estimate_01}
There exists a constant $c_6$ determined by the contact form $a$, the conformally adapted metric $\dd s^2$ and the connection $A_0$ with the following property.  Suppose that $r\geq c_6$ and $\lambda\geq \oh r^\oh$. Let $\{\psi_j\}_{j\in J}$ be an orthonormal eigenbasis for $\mathcal{V}(r,\lambda)$.  Then $\sum_{j\in J} |\psi_j(p)|^2 \leq c_6\lambda^{3}$ for any $p\in Y$.  It follows that $\dim\CV(r,\lambda)\leq c_6\lambda^{3}$.
\end{cor}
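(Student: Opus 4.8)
The plan is to repeat verbatim the duality argument used in the proof of Corollary \ref{cor_main_thm_00}(i), with the sup-norm bound of Theorem \ref{thm_point_estimate_00} replaced by the second assertion of Lemma \ref{lem_estimate_00}. First I would fix $q\in Y$ and choose isometric identifications $E|_q\cong\BC$ and $EK^{-1}|_q\cong\BC$, so that $\psi_j(q) = (\alpha_j(q),\beta_j(q))\in\BC^2$. As before, define the linear functionals $\ev_q^1,\ev_q^2$ on $L^2(Y;\BS)$ by $\ev_q^1(\psi) = \int_Y\langle\psi(p),\sum_{j\in J}\bar\alpha_j(q)\psi_j(p)\rangle\,\dd p$ and likewise for $\ev_q^2$ with $\beta_j$ in place of $\alpha_j$; by the standard functional-analytic fact already used, these are bounded with operator norms exactly $(\sum_{j\in J}|\alpha_j(q)|^2)^\oh$ and $(\sum_{j\in J}|\beta_j(q)|^2)^\oh$.

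Next I would observe that $\ev_q^1 = \pr_1\circ\ev_q\circ\Pi_\lambda$ and $\ev_q^2 = \pr_2\circ\ev_q\circ\Pi_\lambda$, where $\Pi_\lambda\colon L^2(Y;\BS)\to\CV(r,\lambda)$ is the $L^2$-orthogonal projection, $\ev_q$ is evaluation at $q$, and $\pr_1,\pr_2$ are the projections onto the $E$ and $EK^{-1}$ components. Since we are in the regime $\lambda\geq\oh r^\oh$, the \emph{second} assertion of Lemma \ref{lem_estimate_00} applies to $\Pi_\lambda(\psi)\in\CV(r,\lambda)$ and yields $\sup_Y|\Pi_\lambda(\psi)|^2\leq c_6\lambda^3\int_Y|\Pi_\lambda\psi|^2\leq c_6\lambda^3\int_Y|\psi|^2$. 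Consequently $|\ev_q^1(\psi)|^2 + |\ev_q^2(\psi)|^2\leq\sup_Y|\Pi_\lambda\psi|^2\leq c_6\lambda^3\int_Y|\psi|^2$, so the operator norms of $\ev_q^1$ and $\ev_q^2$ are each at most $(c_6\lambda^3)^\oh$, and therefore $\sum_{j\in J}|\psi_j(q)|^2 = \sum_{j\in J}|\alpha_j(q)|^2 + \sum_{j\in J}|\beta_j(q)|^2\leq 2c_6\lambda^3$; renaming the constant gives the pointwise bound.

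Finally I would integrate this pointwise bound over $Y$: by orthonormality of $\{\psi_j\}_{j\in J}$, $\dim\CV(r,\lambda) = \sum_{j\in J}\int_Y|\psi_j|^2 = \int_Y\big(\sum_{j\in J}|\psi_j(q)|^2\big)\,\dd q\leq c_6\lambda^3\vol(Y)$, and absorbing $\vol(Y)$ into $c_6$ gives the dimension estimate. I expect no genuine obstacle here, since the argument is a transcription of the proof of Corollary \ref{cor_main_thm_00}(i); the only points requiring care are invoking the $\lambda\geq\oh r^\oh$ branch of Lemma \ref{lem_estimate_00} (rather than Theorem \ref{thm_point_estimate_00}, which is unavailable for such large $\lambda$) and checking that the constant produced depends only on $a$, $\dd s^2$, and $A_0$, which is automatic because $c_6$ in Lemma \ref{lem_estimate_00} already has this dependence.
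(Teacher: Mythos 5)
Your proposal is correct and is exactly the argument the paper intends: the paper's proof of this corollary simply says it "follows from the same functional analysis argument as that for Corollary \ref{cor_main_thm_00}(i)", with the sup-norm input supplied by the second ($\lambda\geq\oh r^\oh$) assertion of Lemma \ref{lem_estimate_00}, which is what you did. The evaluation-functional duality, the use of the projection $\Pi_\lambda$, and the integration over $Y$ for the dimension bound all match the paper's route.
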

\begin{proof}
This corollary follows from the same functional analysis argument as that for Corollary \ref{cor_main_thm_00}(i).
\end{proof}

The following proposition gives a pointwise estimate on $\beta$ in terms of $\alpha$.

\begin{prop}\label{prop_estimate_00}
There exists a constant $c_{15}$ determined by the contact form $a$, the conformally adapted metric $\dd s^2$ and the connection $A_0$ such that the following holds.  For $r\geq c_{15}$ and $1\leq\lambda\leq\oh r^\oh$, suppose that $\psi=(\alpha,\beta)$ is an element in $\mathcal{V}(r,\lambda)$.  Then,
\begin{align*} \sup_Y|\beta|^2 &\leq c_{15}r^{-1}\sup_Y|\alpha|^2 + c_{15}r^{-\oh}\lambda^4\int_Y|\psi|^2 ~. \end{align*}
It follows that $\sup_Y |\psi|^2 \leq (1+c_{15}r^{-1})\sup_Y|\alpha|^2 + c_{15}r^{-\oh}\lambda^4\int_Y|\psi|^2$.
\end{prop}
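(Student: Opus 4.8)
The plan is to estimate $\sup_Y|\beta|^2$ by a local elliptic argument anchored at the point $p_1\in Y$ where $|\beta|$ achieves its maximum, in the same spirit as the proof of Lemma~\ref{lem_estimate_00} but exploiting the extra decay encoded in the $EK^{-1}$-part of the Weitzenb\"ock formula. Recall from the proof of Proposition~\ref{prop_beta_estimate_00}(i) that pairing (\ref{eqn_weitzenbock_00}) with $\beta$ produces an extra term of size $\tfrac{81}{100}r|\beta|^2$ on the favorable side; the same sign structure, applied pointwise, should give a differential inequality for $|\beta|^2$ of the shape $\dd^*\dd|\beta|^2 \le -c^{-1}r|\beta|^2 + c\,|\alpha|^2 + c\,|\beta||D_r^2\psi| + (\text{cut-off terms})$ on a small geodesic ball $B$ of radius $\epsilon_1$ about $p_1$. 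The point of the negative $r|\beta|^2$ term is that when we invoke the maximum principle against the Green's function of $\dd^*\dd + c^{-1}r$ on $B$ (a massive operator, whose Green's function on the relevant scale $\epsilon_1\sim r^{-1/2}$ is again bounded by $c\rho^{-1}$ but integrates to a quantity of size $r^{-1}$ rather than $O(1)$), the contribution of the $|\alpha|^2$ term is damped by $r^{-1}$.

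Concretely, I would first record the pointwise consequence of (\ref{eqn_weitzenbock_00}): writing $\psi=(\alpha,\beta)$ and using $\cl(\dd a/2)=-\Omega^{-1}\cl(a)$ together with the fact that $\cl(a)$ acts as $-i|a|$ on $EK^{-1}$ (up to the conformal factor, with $|a|$ close to $1$), the $\beta$-component of the Weitzenb\"ock identity gives $\langle\nabla_r^*\nabla_r\beta,\beta\rangle + (\tfrac{81}{100}r - c)|\beta|^2 \le |D_r^2\psi||\beta| + c|\alpha|^2$ pointwise, hence $\dd^*\dd|\beta|^2 \le -(\tfrac{81}{100}r - c)|\beta|^2 + c|\alpha|^2 + 2|D_r^2\psi||\beta| - 2|\nabla_r\beta|^2$. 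Then cut off by $\chi$ supported in $B$, apply the maximum principle as in Lemma~\ref{lem_estimate_00}: $|\beta(p_1)|^2$ is bounded by the Green potential (for $\dd^*\dd$, throwing the good negative term to the other side only when helpful) of $c r\int_B\rho^{-1}|\chi\beta|^2 + c\epsilon_1^{-3}\int_B|\beta|^2 + c\int_B\rho^{-1}|\chi\beta||\chi D_r^2\psi|$, then choose $\epsilon_1$ of order $r^{-1/2}$. The $\int_B\rho^{-1}|\chi\beta|^2$ term splits as before into $c\epsilon_1^2\sup|\beta|^2 + c\epsilon_1^{-1}\int_B|\beta|^2$, absorbing the first piece, and the cross term is handled by Cauchy--Schwarz with a weight $\epsilon_2$, trading against $\sup_Y|\alpha|^2$ on a ball (so controlled by $\sup_Y|\alpha|^2$) and against $\int_Y|D_r^2\psi|^2\le\lambda^4\int_Y|\psi|^2$ via (\ref{eqn_estimate_00}). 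Crucially, the $\int_B|\beta|^2$ terms that appear are now fed back through Lemma~\ref{lem_CV_beta}, which says $\int_Y|\beta|^2\le c r^{-1}\lambda^2\int_Y|\alpha|^2$, producing the claimed $r^{-1/2}\lambda^4\int_Y|\psi|^2$ (after balancing $\epsilon_1,\epsilon_2$) plus $c r^{-1}\sup_Y|\alpha|^2$.

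The second, easier assertion then follows by $|\psi|^2 = |\alpha|^2 + |\beta|^2$ and $\sup_Y|\beta|^2 \le c r^{-1}\sup_Y|\alpha|^2 + c r^{-1/2}\lambda^4\int_Y|\psi|^2$, so $\sup_Y|\psi|^2 \le (1+c r^{-1})\sup_Y|\alpha|^2 + c r^{-1/2}\lambda^4\int_Y|\psi|^2$.

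The main obstacle I anticipate is organizing the Green's-function / maximum-principle step so that the damping factor $r^{-1}$ from the mass term is genuinely extracted: one must be careful whether to keep the $-\tfrac{81}{100}r|\beta|^2$ term inside the operator (getting a Green's function for $\dd^*\dd + cr$, whose $L^1$-mass on a ball of radius $\epsilon_1\sim r^{-1/2}$ is $O(r\cdot\epsilon_1^2\cdot\text{(log or constant)}) = O(1)$ — not obviously helpful) or to move it to the right-hand side and absorb $\sup|\beta|^2$ directly against the coefficient $c_{12}(r\epsilon_1^2)$ with $\epsilon_1^2\sim r^{-1}$, which is the route Lemma~\ref{lem_estimate_00} already uses. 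The bookkeeping of which error terms scale with $\sup_Y|\alpha|^2$ versus $\int_Y|\psi|^2$, and verifying that the powers of $r$ and $\lambda$ come out as stated after the two-parameter optimization in $\epsilon_1,\epsilon_2$, is the delicate part; everything else is a routine adaptation of the cut-off estimate already carried out above, combined with the $L^2$-smallness of $\beta$ from Lemma~\ref{lem_CV_beta} and the iterated-eigenvalue bound (\ref{eqn_estimate_00}).
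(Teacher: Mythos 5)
There is a genuine gap, and it sits exactly at the step you state as a ``pointwise consequence'' of \eqref{eqn_weitzenbock_00}. The splitting $\BS=E\oplus EK^{-1}$ is not parallel for $\nabla_r$, so $\pr_2(\nabla_r^*\nabla_r\psi)\neq\nabla_r^*\nabla_r\beta$: projecting the Weitzenb\"ock formula onto $EK^{-1}$ produces, besides $|\alpha||\beta|$ and $|D_r^2\psi||\beta|$, a coupling term of the form $c\,|\nabla_r\alpha||\beta|$ (this is visible in the paper's own displayed inequality $\oh\dd^*\dd|\beta|^2+|\nabla_r\beta|^2+\frac{81}{100}r|\beta|^2\leq c(|\beta|^2+|\alpha||\beta|+|\nabla_r\alpha||\beta|+|D_r^2\psi||\beta|)$). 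Your proposed inequality omits it, and it cannot be bounded pointwise by $|\alpha|$ and $|\beta|$ alone. At this stage of the paper no sup-norm bound on $\nabla_r\alpha$ of the right size is available: Lemma \ref{lem_estimate_01} only gives $\sup|\nabla_r\psi|^2\leq c\,r^{5/2}\int_Y|\psi|^2$, and after Cauchy--Schwarz ($|\nabla_r\alpha||\beta|\leq \frac{r}{100}|\beta|^2+c\,r^{-1}|\nabla_r\alpha|^2$) this injects an error $c\,r^{3/2}\int_Y|\psi|^2$, far above the claimed $r^{-\oh}\lambda^4\int_Y|\psi|^2$. The paper's proof handles this not by any gradient bound but by an algebraic trick you do not have: add $c\,r^{-1}$ times the projected $\alpha$-inequality, whose $+|\nabla_r\alpha|^2$ on the favorable side cancels the $r^{-1}|\nabla_r\alpha|^2$ coming from the cross term, arriving at \eqref{eqn_estimate_06}; then the maximum principle is applied \emph{globally} on $Y$ to $\zeta=|\beta|^2+c r^{-1}|\alpha|^2-c r^{-1}\sup_Y|\alpha|^2-c r^{-\oh}\lambda^4\int_Y|\psi|^2$, which satisfies $\dd^*\dd\zeta+r\zeta\leq0$ --- no cut-offs, balls, or Green's functions appear.

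Independently of the cross term, your localized Green's-function scheme at scale $\epsilon_1\sim r^{-\oh}$ cannot reach the stated constants. The cut-off error $c\,\epsilon_1^{-3}\int_B|\beta|^2$ is, at best, $c\,r^{3/2}\int_Y|\beta|^2\leq c\,r^{\oh}\lambda^2\int_Y|\psi|^2$ by Lemma \ref{lem_CV_beta}, and $r^{\oh}\lambda^2$ exceeds $r^{-\oh}\lambda^4$ for every admissible $\lambda\leq\oh r^{\oh}$; bounding instead $\int_B|\beta|^2\leq c\,\epsilon_1^3\sup_Y|\beta|^2$ leaves a coefficient of order one in front of $\sup_Y|\beta|^2$, which cannot be absorbed. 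The alternative you flag but do not develop --- keeping the $-\frac{81}{100}r|\beta|^2$ inside the operator and using the exponentially decaying kernel of $\dd^*\dd+cr$ on a fixed-size ball --- is the only way a localized argument could suppress these boundary terms; as written, the hedge is unresolved and the route you lean toward (the Lemma \ref{lem_estimate_00} bookkeeping) provably falls short of the claimed bound. The cleanest repair is the paper's: work globally with the mass term and the combined quantity $|\beta|^2+c r^{-1}|\alpha|^2$, so that neither a gradient bound on $\alpha$ nor any localization is needed.
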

\begin{proof}
Project the Weitzenb\"{o}ck formula (\ref{eqn_weitzenbock_00}) onto the summand of $E$ and $EK^{-1}$, and 
take the inner product with $\alpha$ and $\beta$, respectively.  It leads to the following inequalities:
\begin{align*}
\oh\dd^*\dd|\alpha|^2 + |\nabla_r\alpha|^2 - \frac{100}{81}r|\alpha|^2 &\leq c_{16}\big( |\alpha|^2 + |\beta||\alpha| + |\nabla_r\beta||\alpha| + |D_r^2\psi||\alpha| \big) ~, \\
\oh\dd^*\dd|\beta|^2 + |\nabla_r\beta|^2 + \frac{81}{100} r|\beta|^2 &\leq c_{16}\big( |\beta|^2 + |\alpha||\beta| + |\nabla_r\alpha||\beta| + |D_r^2\psi||\beta| \big) ~.
\end{align*}
Due to Corollary \ref{cor_estimate_00} and the Cauchy--Schwarz inequality, they become:
\begin{align*}
&\dd^*\dd|\alpha|^2 + 2|\nabla_r\alpha|^2 \leq c_{17}\big( r|\alpha|^2 + r^{-1}|\beta|^2 + r^{-1}|\nabla_r\beta|^2 + r^\oh\lambda^4\int_Y|\psi|^2 \big) ~, \\
&\dd^*\dd|\beta|^2 + 2|\nabla_r\beta|^2 + r|\beta|^2 \leq c_{17}\big( r^{-1}|\alpha|^2 + r^{-1}|\nabla_r\alpha|^2 + r^\oh\lambda^4\int_Y|\psi|^2 \big) ~.
\end{align*}
It follows that the combination $|\beta|^2 + c_{17}r^{-1}|\alpha|^2$ obeys the following differential inequality:
\begin{align}\label{eqn_estimate_06}\begin{split}
& \dd^*\dd(|\beta|^2 + c_{17}r^{-1}|\alpha|^2) + r(|\beta|^2+ c_{17}r^{-1}|\alpha|^2) + (|\nabla_r\beta|^2 + c_{17}r^{-1}|\nabla\alpha|^2) \\
\leq\, & c_{18}|\alpha|^2 + c_{18}r^{\oh}\lambda^4\int_Y|\psi|^2 ~.
\end{split}\end{align}
Let $\zeta$ be the function
$$ \zeta \equiv |\beta|^2 + c_{17}r^{-1}|\alpha|^2 - c_{18}r^{-1}\sup_Y|\alpha|^2 - c_{18}r^{-\oh}\lambda^4\int_Y|\psi|^2 ~. $$
The equation (\ref{eqn_estimate_06}) implies that $\dd^*\dd\zeta + r\,\zeta\leq 0$.  By the maximum principle, $\zeta$ cannot have positive maximum.  This finishes the proof of the proposition.
\end{proof}

%%%%%%%%%%%%%%%%%%
\subsection{Pointwise estimate on covariant derivatives}
To prove Theorem \ref{thm_point_estimate_00}, some estimate on the covariant derivative of $\psi$ is needed.  The following lemma provides a preliminary estimate on $\nabla_r\psi$.

\begin{lem}\label{lem_estimate_01}
There exists a constant $c_{20}$ determined by the contact form $a$, the conformally adapted metric $\dd s^2$ and the connection $A_0$ with the following significance.  For any $r\geq c_{20}$ and $1\leq\lambda\leq\oh r^\oh$, suppose that $\psi\in\mathcal{V}(r,\lambda)$.  Then,
\begin{align*} \sup_Y|\nabla_r\psi|^2 &\leq c_{20}r^{\frac{5}{2}}\int_Y|\psi|^2  \end{align*}
and $\sup_Y|\nabla_r(D_r^2\psi)|^2 \leq c_{20}r^{\frac{5}{2}}\lambda^4\int_Y|\psi|^2$.
\end{lem}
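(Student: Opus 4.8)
\emph{Reduction.} The second estimate follows from the first. Since $D_r$ maps $\CV(r,\lambda)$ into itself, $D_r^2\psi$ again lies in $\CV(r,\lambda)$, and by (\ref{eqn_estimate_00}) with $k=2$ one has $\int_Y|D_r^2\psi|^2\le\lambda^4\int_Y|\psi|^2$; applying the first estimate to $D_r^2\psi$ in place of $\psi$ then gives $\sup_Y|\nabla_r(D_r^2\psi)|^2\le c_{20}r^{5/2}\int_Y|D_r^2\psi|^2\le c_{20}\lambda^4 r^{5/2}\int_Y|\psi|^2$. So the plan is to prove only the first inequality, and the natural route is to repeat the maximum-principle scheme of the proof of Lemma \ref{lem_estimate_00}, this time applied to the function $|\nabla_r\psi|^2$ rather than $|\psi|^2$. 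For that I will need a Bochner-type differential inequality for $|\nabla_r\psi|^2$ whose right-hand side is controlled pointwise by $r$ together with $L^2$-data of $\psi$.

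\emph{The differential inequality.} Write $\nabla_r$ also for the tensor-product connection (Levi-Civita tensored with the spin-c connection) on $T^*Y\otimes\BS$. Commuting covariant derivatives yields the standard identity $\nabla_r^*\nabla_r(\nabla_r\psi)=\nabla_r(\nabla_r^*\nabla_r\psi)+\CR(\psi,\nabla_r\psi)$, where $\CR(\psi,\nabla_r\psi)$ is a universal bilinear expression built from the Riemann curvature of $\dd s^2$, the curvature $F_{A_0-ira}=F_{A_0}-ir\,\dd a$, and its covariant derivative $\nabla F_{A_0-ira}=\nabla F_{A_0}-ir\,\nabla\dd a$, contracted with $\psi$ and $\nabla_r\psi$. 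Since $a$, $\dd s^2$ and $A_0$ are fixed, both $F_{A_0-ira}$ and $\nabla F_{A_0-ira}$ are pointwise of size $\CO(r)$, so $|\CR(\psi,\nabla_r\psi)|\le c\,r(|\psi|+|\nabla_r\psi|)$. Differentiating the Weitzenb\"ock formula (\ref{eqn_weitzenbock_00}), namely $\nabla_r^*\nabla_r\psi=D_r^2\psi-\frac{\kappa}{4}\psi-\cl(\frac{F_{A_0}}{2})\psi+ir\cl(\frac{\dd a}{2})\psi$, and again using that everything but $r$ is fixed, gives $|\nabla_r(\nabla_r^*\nabla_r\psi)-\nabla_r(D_r^2\psi)|\le c\,r(|\psi|+|\nabla_r\psi|)$. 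Combining these with the elementary identity $\frac{1}{2}\dd^*\dd|\nabla_r\psi|^2+|\nabla_r\nabla_r\psi|^2=\langle\nabla_r^*\nabla_r(\nabla_r\psi),\nabla_r\psi\rangle$ and discarding $|\nabla_r\nabla_r\psi|^2\ge0$, the upshot is
\begin{align*}
\dd^*\dd|\nabla_r\psi|^2\le c\,r\,|\nabla_r\psi|^2+c\,r\,|\psi|^2+2\,|\nabla_r(D_r^2\psi)|\,|\nabla_r\psi|\,.
\end{align*}
The one place where care is needed is exactly this curvature bookkeeping: one must check that differentiating the $r$-dependent part $-ira$ of the connection does not worsen the power of $r$, which holds because $\dd a$ is a fixed form, so $\nabla F_{A_0-ira}$ is still $\CO(r)$ rather than anything larger. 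This is the main (and essentially the only genuinely new) point of the proof.

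\emph{Running the Green's-function argument.} Finally I feed this inequality into the maximum-principle/Green's-function argument of Lemma \ref{lem_estimate_00} essentially verbatim, with $|\nabla_r\psi|^2$ in the role of $|\psi|^2$ and cut-off radius $\epsilon_1\sim r^{-1/2}$: at the maximum point $p_0$ of $|\nabla_r\psi|$, the value $|\nabla_r\psi(p_0)|^2$ is at most $c$ times the integral of $\rho^{-1}$ against the right-hand side above, plus a cut-off term of the form $\epsilon_1^{-3}\int_Y|\nabla_r\psi|^2$. The terms $\int_B\rho^{-1}(r|\nabla_r\psi|^2)$ and $\int_B\rho^{-2}|\nabla_r\psi|^2$ are handled exactly as there, producing a multiple of $\epsilon_1^2|\nabla_r\psi(p_0)|^2$ (absorbed once $\epsilon_1\sim r^{-1/2}$ is small) plus $\epsilon_1^{-1}\int_Y|\nabla_r\psi|^2$; the term $r\int_B\rho^{-1}|\psi|^2$ is at most $c\,r\epsilon_1^2\sup_Y|\psi|^2$; and the cross term $\int_B\rho^{-1}|\nabla_r(D_r^2\psi)||\nabla_r\psi|$ is split by Young's inequality into $\epsilon_2^{-1}\int_B\rho^{-2}|\nabla_r\psi|^2+\epsilon_2\int_Y|\nabla_r(D_r^2\psi)|^2$ with $\epsilon_2\sim r^{-1/2}$. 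It remains to insert three \emph{a priori} bounds, all coming from (\ref{eqn_estimate_00}), the Weitzenb\"ock formula, integration by parts and Lemma \ref{lem_estimate_00}: first $\int_Y|\nabla_r\psi|^2=\int_Y|D_r\psi|^2+\CO(r)\int_Y|\psi|^2\le c\,r\int_Y|\psi|^2$ (using $\lambda^2\le\frac14 r$); second $\int_Y|\nabla_r(D_r^2\psi)|^2=\int_Y|D_r^3\psi|^2+\CO(r)\int_Y|D_r^2\psi|^2\le c\,r\lambda^4\int_Y|\psi|^2$; and third $\sup_Y|\psi|^2\le c\,r^{3/2}\int_Y|\psi|^2$. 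Choosing $\epsilon_1,\epsilon_2\sim r^{-1/2}$ with $\epsilon_1$ a small fixed multiple of $\epsilon_2$ so that the $|\nabla_r\psi(p_0)|^2$-terms are absorbed, every surviving term is bounded by $c\,r^{5/2}\int_Y|\psi|^2$, which is the claim; the power $r^{5/2}=r\cdot r^{3/2}$ reflects that, in this scaling, each extra covariant derivative costs exactly one factor of $r^{1/2}$ in the sup-norm, and the rest is a mechanical transcription of the scheme already used for Lemma \ref{lem_estimate_00}.
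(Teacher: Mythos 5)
Your proposal is correct and follows essentially the same route as the paper: the $L^2$ bounds $\int_Y|\nabla_r\psi|^2\leq c\,r\int_Y|\psi|^2$ and $\int_Y|\nabla_r(D_r^2\psi)|^2\leq c\,r\lambda^4\int_Y|\psi|^2$ from the Weitzenb\"ock formula and (\ref{eqn_estimate_00}), the commutator identity (the paper's (\ref{eqn_estimate_02})) together with the differentiated Weitzenb\"ock formula to get a differential inequality for $|\chi\nabla_r\psi|^2$ whose dangerous coefficients are only $\CO(r)$, and then the Green's-function/maximum-principle scheme of Lemma \ref{lem_estimate_00} with $\epsilon_1,\epsilon_2\sim r^{-\oh}$, the second assertion following by applying the first to $D_r^2\psi\in\CV(r,\lambda)$. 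This matches the paper's proof in both structure and bookkeeping, so nothing further is needed.
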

\begin{proof}
The first step is to estimate the $L^2$-norm of $\nabla_r\psi$.  Integrating the Weitzenb\"{o}ck formula (\ref{eqn_weitzenbock_00}) against $\psi$ implies that $\int_Y |\nabla_r\psi|^2 \leq c_{21}r\int_Y|\psi|^2 + \int_Y|D_r^2\psi||\psi| $.  It follows from (\ref{eqn_estimate_00}) and the Cauchy--Schwarz inequality that
\begin{align}\label{eqn_estimate_01}\begin{split}
\int_Y |\nabla_r\psi|^2 &\leq c_{22}r\int_Y |\psi|^2 \quad\text{ and } \\
\int_Y |\nabla_r(D_r^2\psi)|^2 &\leq c_{22}r\int_Y |D_r^2\psi|^2 \leq c_{23}r\lambda^4 \int_Y |\psi|^2 ~.
\end{split}\end{align}

Commuting covariant derivatives gives the following formulae:
\begin{align}\label{eqn_estimate_02}
\nabla_r^*\nabla_r\nabla_r\psi - \nabla_r\nabla_r^*\nabla_r\psi
&= ir\,\dd a(\nabla_r\psi,\cdot) - \oh ir(\dd^*\dd a)\otimes\psi + {Q}_1(\nabla_r\psi) + {Q}_2(\psi)
\end{align}
where $Q_1$ and $Q_2$ are operators defined from the contact form $a$, the metric $\dd s^2$ and the connection $A_0$; in particular, neither depends on $r$, and neither is a differential operator.  The computation for (\ref{eqn_estimate_02}) is included in \S\ref{apx_commuting}.  The significance of (\ref{eqn_estimate_02}) is that the crucial terms of right hand side are $r\nabla_r\psi$ and $r\psi$.

The term $\nabla_r\nabla_r^*\nabla_r\psi$ can be replaced by the covariant derivative of (\ref{eqn_weitzenbock_00}).  Let $\chi$ be a cut-off function.  After some simple manipulations, $\chi\nabla_r\psi$ obeys the following differential inequality:
\begin{align*}
\dd^*\dd|\chi\nabla_r\psi|^2 &\leq c_{24}r|\chi\nabla_r\psi|^2 + c_{24}|\chi\nabla_r\psi|\big(r|\chi\psi| + |\chi\nabla_r(D_r^2\psi)|\big) \\
&\quad + c_{24}(\chi|\dd^*\dd\chi| + |\dd\chi|^2)|\nabla_r\psi|^2 ~.
\end{align*}
The same Green's function argument as that in the proof of Lemma \ref{lem_estimate_00} shows that
\begin{align*}
\sup_Y |\nabla_r\psi|^2 &\leq c_{25}\big( r^{\frac{3}{2}}\int_Y|\nabla_r\psi|^2 + r^{-\oh}\int_Y|\nabla_r(D_r^2\psi)|^2 + r^{\frac{3}{2}}\int_Y|\psi|^2 \big) ~.
\end{align*}
This estimate and (\ref{eqn_estimate_01}) together prove the first assertion.  The second assertion follows from the first assertion and (\ref{eqn_estimate_00}).  This completes the proof of the lemma.
\end{proof}

The following lemma provides a refined estimate on $\nabla_r\psi$.
\begin{lem}
There exists a constant $c_{26}$ determined by the contact form $a$, the conformally adapted metric $\dd s^2$ and the connection $A_0$ such that the following holds.  For any $r\geq c_{26}$ and $1\leq\lambda\leq \oh r^\oh$, suppose that $\psi\in\mathcal{V}(r,\lambda)$.  Then
\begin{align*} \sup_Y|\nabla_r\psi|^2 &\leq c_{26}r\sup_Y|\psi|^2 + c_{26}r^{\oh}\lambda^4\int_Y|\psi|^2 ~. \end{align*}
\end{lem}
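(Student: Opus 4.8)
The plan is to localize near a point $p_0\in Y$ where $|\nabla_r\psi|$ is maximal, rescale by the natural length scale $r^{-\oh}$ so that the equation satisfied by $\psi$ becomes uniformly elliptic, and then read off $\nabla_r\psi(p_0)$ from an interior elliptic estimate, using that $\sup_Y|\psi|$ and $\sup_Y|D_r^2\psi|$ are already controlled (the latter by Corollary \ref{cor_estimate_00}). First I would rewrite the Weitzenb\"ock formula (\ref{eqn_weitzenbock_00}), together with $\cl(\dd a/2)=-\Omega^{-1}\cl(a)$, in the form
\begin{align*}
\nabla_r^*\nabla_r\psi + ir\,\cl(\tfrac{\dd a}{2})\psi = D_r^2\psi - \tfrac{\kappa}{4}\psi - \cl(\tfrac{F_{A_0}}{2})\psi ~;
\end{align*}
here the only $r$-dependence on the left is the zeroth order term $ir\,\cl(\dd a/2)$, of size $\CO(r)$, and the curvature of $\nabla_r$ satisfies $|F_{\nabla_r}|\leq cr$ (it is $\CO(1)$ from the Levi--Civita part plus $\tfrac12(F_{A_0}-ir\,\dd a)$). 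Now fix $p_0$ with $|\nabla_r\psi(p_0)|=\sup_Y|\nabla_r\psi|$, work in geodesic normal coordinates on a ball of radius a fixed fraction of the injectivity radius, trivialize $\BS$ by $\nabla_r$-parallel transport along radial geodesics, and rescale the coordinates by $r^{\oh}$. Because $|F_{\nabla_r}|\leq cr$, on the rescaled unit ball $\tilde B_1$ the metric is uniformly close to the Euclidean one and the rescaled connection $1$-form is $\CO(1)$ in $C^0$; dividing the displayed identity by $r$ then yields an equation $L\tilde\psi=\tilde f$ with $L$ uniformly elliptic, coefficients bounded independently of $r$, and $|\tilde f|\leq r^{-1}|D_r^2\psi|+c\,r^{-1}|\tilde\psi|$ at the corresponding points.

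Then I would apply the interior $W^{2,p}$ estimate for $L$ on $\tilde B_{1/2}\subset\tilde B_1$ for some $p>3$, followed by the Sobolev embedding $W^{2,p}\hookrightarrow C^1$, to get $\sup_{\tilde B_{1/2}}|\tilde\nabla\tilde\psi|\leq c\big(\|\tilde\psi\|_{L^p(\tilde B_1)}+\|\tilde f\|_{L^p(\tilde B_1)}\big)$. Bounding $\|\tilde\psi\|_{L^p(\tilde B_1)}\leq c\sup_Y|\psi|$ and, via Corollary \ref{cor_estimate_00}, $\|\widetilde{D_r^2\psi}\|_{L^p(\tilde B_1)}\leq c\,\|D_r^2\psi\|_{L^\infty(Y)}\leq c\,r^{\frac{3}{4}}\lambda^2\big(\int_Y|\psi|^2\big)^{\oh}$ (the rescaled domain has bounded volume, so no power of $r$ is introduced here), one obtains $\sup_{\tilde B_{1/2}}|\tilde\nabla\tilde\psi|\leq c\sup_Y|\psi|+c\,r^{-\frac14}\lambda^2\big(\int_Y|\psi|^2\big)^{\oh}$. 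Since $\tilde\nabla=r^{-\oh}\nabla_r$ and $p_0$ is the center of $\tilde B_1$, undoing the rescaling gives $\sup_Y|\nabla_r\psi|=|\nabla_r\psi(p_0)|\leq c\,r^{\oh}\sup_Y|\psi|+c\,r^{\frac14}\lambda^2\big(\int_Y|\psi|^2\big)^{\oh}$, and squaring yields the asserted bound.

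The step I expect to be the main obstacle is checking that after the $r^{\oh}$-rescaling the metric, the $\nabla_r$-connection in the radial gauge, and the zeroth order term $ir\,\cl(\dd a/2)$ all admit $C^0$-bounds on $\tilde B_1$ independent of $r$, so that the interior elliptic estimate applies with an $r$-independent constant; the bound $|F_{\nabla_r}|\leq cr$ (coming from $F_{A_0}=\CO(1)$ and $|r\,\dd a|=\CO(r)$) is exactly what makes this go through. Alternatively, staying closer to the method of Lemma \ref{lem_estimate_01}, one could run the Green's function argument for the differential inequality satisfied by $|\chi\nabla_r\psi|^2$ with $\chi$ supported at scale $r^{-\oh}$, estimating the $\nabla_r(D_r^2\psi)$-term by $\sup_Y|\nabla_r\psi|$ times the $L^2$-bound (\ref{eqn_estimate_01}) and replacing the global control of $\int_Y|\nabla_r\psi|^2$ over the cut-off annulus by a local Caccioppoli estimate for $\psi$ fed by $\sup_Y|\psi|$ and Corollary \ref{cor_estimate_00}; this last substitution is the delicate point, and is what upgrades the $\CO(r^{5/2}\int_Y|\psi|^2)$ of Lemma \ref{lem_estimate_01} to the stated estimate.
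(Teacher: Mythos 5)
Your argument is correct, but it is not the route the paper takes. The paper proves this lemma entirely by the maximum principle: it pairs the Weitzenb\"ock formula (\ref{eqn_weitzenbock_00}) with $\psi$ and the commutation identity (\ref{eqn_estimate_02}) with $\nabla_r\psi$ (substituting the covariant derivative of (\ref{eqn_weitzenbock_00}) for $\nabla_r\nabla_r^*\nabla_r\psi$), obtaining the two pointwise differential inequalities (\ref{eqn_estimate_08}) and (\ref{eqn_estimate_05}), and then applies the maximum principle to the combination $|\nabla_r\psi|^2+c\,r|\psi|^2$; this requires the bound on $\sup_Y|\nabla_r(D_r^2\psi)|$ from Lemma \ref{lem_estimate_01}. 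Your blow-up argument --- normal coordinates and $\nabla_r$-radial gauge at the maximum point, rescaling by $r^{-\oh}$, interior $W^{2,p}$ estimates plus Sobolev embedding --- needs only the $\sup_Y|D_r^2\psi|$ bound of Corollary \ref{cor_estimate_00} and avoids the commutator formula and Lemma \ref{lem_estimate_01} altogether; the price is exactly the uniformity check you flag, and it does go through: in the radial gauge $|A(x)|\leq c|x|\sup|F_{\nabla_r}|\leq c\,r|x|$ (and $|\pl A|\leq c(\sup|F_{\nabla_r}|+|x|\sup|\nabla F_{\nabla_r}|)\leq c\,r$ on the ball of radius $r^{-\oh}$), so after rescaling the metric is uniformly close to Euclidean and all lower-order coefficients, including the term $i\cl(\dd a/2)$ left over after dividing by $r$, are bounded independently of $r$, which gives $r$-independent elliptic constants on the unit ball; the scaling $\tilde{\nabla}=r^{-\oh}\nabla_r$ then yields exactly the stated estimate. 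In short, your approach is a more standard elliptic-regularity argument that trades the paper's pointwise maximum-principle bookkeeping (which stays consistent with the Green's-function techniques used throughout \S\ref{sec_pt_est}) for gauge-fixing and rescaling bookkeeping; your closing alternative sketch is closer in spirit to the paper's methods, but the paper's actual proof is the maximum-principle combination described above rather than a localized Green's-function argument.
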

\begin{proof}
Take the inner product of (\ref{eqn_weitzenbock_00}) with $\psi$ and apply Corollary \ref{cor_estimate_00} to obtain the following differential inequality:
\begin{align}\label{eqn_estimate_08}\begin{split}
\oh\dd^*\dd|\psi|^2 + |\nabla_r\psi|^2 &\leq 2r|\psi|^2 + |\psi||D_r^2\psi| \\
&\leq c_{27}r |\psi|^2 + c_{27}r^{\oh}\lambda^4\int_Y|\psi|^2 ~.
\end{split}\end{align}
Similarly, take the inner product of (\ref{eqn_estimate_02}) with $\nabla_r\psi$, replace $\nabla_r\nabla_r^*\nabla_r\psi$ by the covariant derivative of (\ref{eqn_weitzenbock_00}), and apply Lemma \ref{lem_estimate_01} to obtain the following differential inequality:
\begin{align}\label{eqn_estimate_05}\begin{split}
\oh\dd^*\dd|\nabla_r\psi|^2 + |\nabla_r\nabla_r\psi|^2 &\leq c_{28}r|\nabla_r\psi|^2 + c_{28}r|\nabla_r\psi||\psi| + |\nabla_r\psi||\nabla_r(D_r^2\psi)| \\
&\leq c_{29}r(|\nabla_r\psi|^2 + |\psi|^2) + c_{29}r^{\frac{3}{2}}\lambda^4\int_Y|\psi|^2 ~.
\end{split}\end{align}
It follows from (\ref{eqn_estimate_08}) and (\ref{eqn_estimate_05}) that
\begin{align*}
\dd^*\dd(|\nabla_r\psi|^2 + c_{30}r|\psi|^2) + r(|\nabla_r\psi|^2 + c_{30}r|\psi|^2) &\leq c_{31}r^2|\psi|^2 + c_{31}r^{\frac{3}{2}}\lambda^4\int_Y|\psi|^2 ~.
\end{align*}
By the maximum principle,
$$  |\nabla_r\psi|^2 + c_{30}r|\psi|^2 - c_{31}r\sup_Y|\psi|^2 - c_{31}r^\oh\lambda^4\int_Y|\psi|^2  $$ cannot admit positive maximum.  This completes the proof of the lemma.
\end{proof}

Lemma \ref{lem_CV_beta} says that the $L^2$-norm of $\nabla_r\beta$ cannot be not large.  The following proposition is a pointwise version.

\begin{prop}\label{prop_estimate_02}
There exists a constant $c_{35}$ determined by the contact form $a$, the conformally adapted metric $\dd s^2$ and the connection $A_0$ such that the following holds.  For any $r\geq c_{35}$ and $1\leq\lambda\leq \oh r^\oh$, suppose that $\psi\in\mathcal{V}(r,\lambda)$.  Then
\begin{align*} \sup_Y|\nabla_r\beta|^2 &\leq c_{35}\sup_Y|\alpha|^2 + c_{35}r^{\oh}\lambda^4\int_Y|\psi|^2 ~. \end{align*}
\end{prop}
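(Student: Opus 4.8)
The plan is to run the same Bochner/maximum-principle scheme that produced the two preceding lemmas, but applied to the $EK^{-1}$-component $\beta$ rather than to $\psi$, so as to exploit the favorable zeroth-order term that the Clifford action of $\dd a$ carries on the $EK^{-1}$ summand. Recall that in the proofs of Proposition \ref{prop_beta_estimate_00}(i) and Proposition \ref{prop_estimate_00} the identity $\cl(\dd a/2)=-\Omega^{-1}\cl(a)$ feeds a \emph{positive} mass of size $\sim r$ into the $EK^{-1}$-component of the Weitzenb\"ock formula: projecting (\ref{eqn_weitzenbock_00}) onto the $EK^{-1}$ summand gives, with $P$ denoting the projection,
\begin{align*}
\nabla_r^*\nabla_r\beta + r\Omega^{-2}\beta = P(D_r^2\psi) + \CR, \qquad |\CR|\leq c\big(|\psi|+|\nabla_r\psi|\big),
\end{align*}
where $\CR$ collects $\kappa$, $F_{A_0}$ and the commutator $[\nabla_r^*\nabla_r,P]\psi$, which is a first-order operator in $\psi$ with $r$-independent coefficients (so its size is $O(|\nabla_r\psi|+|\psi|)$). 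Since $\Omega^{-2}\geq\tfrac{81}{100}$, the term $r\Omega^{-2}\beta$ is a genuine positive mass. The goal is to differentiate this identity once, extract a Schr\"odinger-type inequality for $|\nabla_r\beta|^2$ with a mass of size $\sim r$, and then read off the estimate from the maximum principle, dividing the inhomogeneous term by the mass.

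First I would differentiate the displayed identity, use the commutation formula (\ref{eqn_estimate_02}) (valid for any section of $\BS$, hence for $\beta$) to rewrite $\nabla_r^*\nabla_r(\nabla_r\beta)$ as $\nabla_r(\nabla_r^*\nabla_r\beta)$ plus terms of size $O(r|\nabla_r\beta|+r|\beta|)$, and pair the result with $\nabla_r\beta$. This produces
\begin{align*}
\oh\dd^*\dd|\nabla_r\beta|^2 + |\nabla_r\nabla_r\beta|^2 + r\Omega^{-2}|\nabla_r\beta|^2 \leq Cr|\nabla_r\beta|^2 + (\mathrm{RHS}),
\end{align*}
where $Cr|\nabla_r\beta|^2$ absorbs the curvature-commutator terms $ir\,\dd a(\nabla_r\beta,\cdot)$ and $-\tfrac{ir}{2}(\dd^*\dd a)\beta$ exactly as in the derivation of (\ref{eqn_estimate_05}), and $(\mathrm{RHS})$ consists of: $|\nabla_r(D_r^2\psi)|\,|\nabla_r\beta|$ and $|D_r^2\psi|\,|\nabla_r\beta|$, controlled via Lemma \ref{lem_estimate_01} and Corollary \ref{cor_estimate_00}; $r$-weighted zeroth-order terms $\lesssim r|\psi|\,|\nabla_r\beta|$, controlled via Proposition \ref{prop_estimate_00}; a term $\lesssim |\nabla_r\nabla_r\psi|\,|\nabla_r\beta|$ coming from differentiating the $(\nabla_0 P)(\nabla_r\psi)$ piece of $\CR$; and lower-order terms $\lesssim |\nabla_r\psi|\,|\nabla_r\beta|$, controlled via the preceding lemma. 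Every occurrence of a pointwise $\sup$ of $|\alpha|$, $|\nabla_r\psi|$, $|D_r^2\psi|$ or $|\nabla_r(D_r^2\psi)|$ on the right is bounded using Proposition \ref{prop_estimate_00}, the preceding lemma, Corollary \ref{cor_estimate_00} and Lemma \ref{lem_estimate_01}, so that after Young's inequality $(\mathrm{RHS})\leq\varepsilon r|\nabla_r\beta|^2 + c\,r\sup_Y|\alpha|^2 + c\,r^{\frac{3}{2}}\lambda^4\int_Y|\psi|^2$.

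To close the loop I would add to this inequality (i) a large multiple $Mr$ of the $|\beta|^2$-inequality obtained by pairing the first displayed identity with $\beta$ itself — this carries the mass $r\Omega^{-2}|\beta|^2$, amplified to $Mr^2\Omega^{-2}|\beta|^2$, and supplies the decisive mass $Mr|\nabla_r\beta|^2$ that beats $Cr|\nabla_r\beta|^2$ while keeping $|\beta|$ small — and (ii) the multiple $Nr^{-1}$ of the already-established inequality (\ref{eqn_estimate_05}), which supplies $Nr^{-1}|\nabla_r\nabla_r\psi|^2$ on the good side to absorb the $|\nabla_r\nabla_r\psi|\,|\nabla_r\beta|$ term while contributing only $\lesssim r\sup_Y|\alpha|^2 + r^{\frac{3}{2}}\lambda^4\int_Y|\psi|^2$ to the right-hand side (again via Proposition \ref{prop_estimate_00} and the preceding lemma). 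With $M$ and $N$ fixed large, the combination reads
\begin{align*}
\dd^*\dd F + \frac{M}{2}r|\nabla_r\beta|^2 + \frac{M}{4}r^2\Omega^{-2}|\beta|^2 \leq c\,r\sup_Y|\alpha|^2 + c\,r^{\frac{3}{2}}\lambda^4\int_Y|\psi|^2,
\end{align*}
where $F=\oh|\nabla_r\beta|^2 + \tfrac{Mr}{2}|\beta|^2 + \tfrac{N}{2r}|\nabla_r\psi|^2$. Evaluating at a maximum point $p_0$ of $F$, the Laplacian term is nonnegative, so $|\nabla_r\beta|^2(p_0)\leq c\sup_Y|\alpha|^2 + cr^{\oh}\lambda^4\int_Y|\psi|^2$ and $|\beta|^2(p_0)\leq cr^{-1}\sup_Y|\alpha|^2 + cr^{-\oh}\lambda^4\int_Y|\psi|^2$; combining these with the preceding lemma's bound $\sup_Y|\nabla_r\psi|^2\leq c_{26}r\sup_Y|\psi|^2 + c_{26}r^{\oh}\lambda^4\int_Y|\psi|^2$ and Proposition \ref{prop_estimate_00} applied to $\sup_Y|\psi|^2$ yields $\sup_Y|\nabla_r\beta|^2\leq 2F(p_0)\leq c_{35}\sup_Y|\alpha|^2 + c_{35}r^{\oh}\lambda^4\int_Y|\psi|^2$, as claimed.

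The main obstacle is the middle step: organizing the differentiated, $EK^{-1}$-projected Weitzenb\"ock identity so that (a) the amplified mass $Mr|\nabla_r\beta|^2$ genuinely dominates the curvature-commutator coefficient $C$ (whose a priori size $\sim r\Omega^{-2}$ could by itself cancel the favorable mass), and (b) every second-covariant-derivative term is either on the good side of some inequality in the combination, or multiplied by a negative power of $r$ large enough to be absorbed. This bookkeeping is intricate but structurally identical to the proofs of Lemma \ref{lem_estimate_01} and of the preceding lemma.
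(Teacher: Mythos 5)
Your proposal is correct and follows essentially the same route as the paper: differentiate the $EK^{-1}$-projected Weitzenb\"ock identity, commute derivatives via (\ref{eqn_estimate_02}), pair with $\nabla_r\beta$, control the $D_r^2\psi$-terms by Corollary \ref{cor_estimate_00} and Lemma \ref{lem_estimate_01}, absorb $|\nabla_r\nabla_r\psi|$ by adding $r^{-1}\times$(\ref{eqn_estimate_05}) and the bad $r|\nabla_r\beta|^2$ by adding $r\times$ the $\beta$-paired inequality, then apply the maximum principle to a combined quantity of the form $|\nabla_r\beta|^2+r|\beta|^2+r^{-1}|\nabla_r\psi|^2$. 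The only (cosmetic) difference is that the paper's comparison function $\zeta_2$ also carries the $c_{17}r^{-1}|\alpha|^2$ piece of (\ref{eqn_estimate_06}) to cancel the $\alpha$-gradient cross terms pointwise, whereas you discharge those terms through the previously established sup-norm bounds (Proposition \ref{prop_estimate_00} and the refined $\nabla_r\psi$ lemma), which is equally valid.
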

\begin{proof}
In order to derive the equation for $\nabla_r\beta$, consider (\ref{eqn_estimate_02}) for $\beta$:
\begin{align}\label{eqn_estimate_03}
\nabla_r^*\nabla_r\nabla_r\beta = \nabla_r\nabla_r^*\nabla_r\beta + ir\dd a(\nabla_r\beta,\cdot) - \oh ir(\dd^*\dd a)\otimes\beta + {Q}_1(\nabla_r\beta) + {Q}_2(\beta) .
\end{align}
The connection Laplacian on $\beta$ can be formally expressed in terms of $\psi$:
\begin{align*}
\nabla_r^*\nabla_r\beta &= \pr_2(\nabla_r^*\nabla_r\psi) + Q_3(\nabla_r\psi) + Q_4(\psi) \\
&= \pr_2(D_r^2\psi) - r\Omega^{-2}\beta + Q_3(\nabla_r\psi) + Q_5(\psi) ~.
\end{align*}
The first equality is a straightforward computation, and the second equality follows from (\ref{eqn_weitzenbock_00}).  Here, $Q_3$, $Q_4$ and $Q_5$ are operators defined from the contact form, the metric and the base connection; in particular, none depends on $r$, and none is a differential operator.  The covariant derivative of the above equation reads
\begin{align}\label{eqn_estimate_04}\begin{split}
\nabla_r\nabla_r^*\nabla_r\beta &= -r\nabla_r(\Omega^{-2}\beta) + Q_6(\nabla_r(D_r^2\psi)) + Q_7(D_r^2\psi) \\
&\quad + Q_8(\nabla_r\nabla_r\psi) + Q_9(\nabla_r\psi) + Q_{10}(\psi)
\end{split}\end{align}
where all the $Q_j$ are independent of $r$, and they are not differential operators.

Take the inner product of (\ref{eqn_estimate_03}) with $\nabla_r\beta$, and substitute $\nabla_r\nabla_r^*\nabla_r\beta$ by (\ref{eqn_estimate_04}).  After applying the Cauchy--Schwarz inequality, it becomes the following differential inequality:
\begin{align*}
\oh\dd^*\dd|\nabla_r\beta|^2 &\leq c_{36}\big( r|\nabla_r\beta|^2 + r|\beta|^2 + r^{-1}|\nabla_r(D_r^2\psi)|^2 + r^{-1}|D_r^2\psi|^2 \\
&\quad\qquad + r^{-1}|\nabla_r\nabla_r\psi|^2 + r^{-1}|\nabla_r\psi|^2 + r^{-1}|\psi|^2\big) ~.
\end{align*}
To proceed, apply Lemma \ref{lem_estimate_01} on $|\nabla_r(D_r^2\psi)|^2$ and Corollary \ref{cor_estimate_00} on $|D_r^2\psi|^2$.  Then add (\ref{eqn_estimate_05}) multiplied by $c_{36}r^{-1}$ to cancel $c_{36}r^{-1}|\nabla_r\nabla_r\psi|^2$.  It ends up with the following inequality:
\begin{align*}
\oh\dd^*\dd \zeta_1 + r\,\zeta_1 &\leq c_{37}\big(r|\nabla_r\beta|^2 + |\nabla_r\psi|^2 + |\psi|^2 + r^{\frac{3}{2}}\lambda^4\int_Y|\psi|^2\big)
\end{align*}
where
$$   \zeta_1 = |\nabla_r\beta|^2 + c_{36}r^{-1}|\nabla_r\psi|^2 ~.   $$
The first three terms on the right hand side can be canceled by adding (\ref{eqn_estimate_06}) multiplied by $c_{37}r$.  It leads to the following inequality:
\begin{align*}
\dd^*\dd \zeta_2 + c_{38}r\,\zeta_2 &\leq c_{39}\big( r|\alpha|^2 + r^{\frac{3}{2}}\lambda^4\int_Y |\psi|^2 \big)
\end{align*}
where
$$   \zeta_2 = \zeta_1 + c_{37}r(|\beta_2| + c_{17}r^{-1}|\alpha|^2) ~.   $$  The maximum principle implies that
$   \zeta_2 - c_{40}\sup_Y|\alpha|^2 - c_{40}r^\oh\lambda^4\int_Y|\psi|^2   $
cannot have positive maximum for some constant $c_{40}$.  This completes the proof of the proposition.
\end{proof}

%%%%%%%%%%%%%%%%%%
\subsection{Estimate the integral over a transverse disk}
%Theorem \ref{thm_point_estimate_00} will be proved by examining the Dirac equation over a transverse disk.
The purpose of this subsection is to estimate the integral of $\alpha$ over a transverse disk.  This is a local computation.  It is easier to work with the adapted metric $\dd\mr{s}^2$ instead of the conformally adapted metric $\dd s^2 = \Omega^2\dd\mr{s}^2$.  Let $\mr{\psi} = (\mr{\alpha},\mr{\beta})$ be $\Omega\psi = (\Omega\alpha, \Omega\beta)$.  Note that $\mr{\psi}$ and $\psi$ have uniformly equivalent sup-norms and $L^2$-norms.  According to (\ref{eqn_conformal_02}), the equation for $\mr{\psi}$ reads
\begin{align}\label{eqn_conformal_03}
\mr{D}_r\mr{\psi} &= \Omega^2 D_r\psi ~.
\end{align}

%%%%%%%%%%%%%%%%%%
\subsubsection{Adapted coordinate chart}\label{subsec_adapted_chart}
Given an adapted metric $\dd\mr{s}^2$, \cite[\S6.4]{ref_Taubes_SW_Weinstein} introduces the notion of an \emph{adapted coordinate chart}.  For any $p\in Y$, the {adapted coordinate chart} centered at $p$ is defined as the follows.  Denote by $v$ the Reeb vector field.  Choose two oriented, orthonormal vectors $e_1$ and $e_2$ for $\ker(a)|_p$.  For any $\ell>0$, let $I_\ell$ be the interval $[-\ell,\ell]$, and $C_\ell$ be the standard disk of radius $\ell$ in $\mathbb{R}^2$.  Consider
\begin{align*}  \begin{array}{cccl}
&C_\ell\times I_\ell &\to & Y \\
\varphi_0: &((x,y),0) &\mapsto & \exp_{p}(xe_1+ye_2) ~, \\
\varphi: &((x,y),z) &\mapsto & \exp_{\varphi_0(x,y)}(zv)
\end{array}  \end{align*}
where $\exp$ is the geodesic exponential map of $\dd\mr{s}^2$.  The map $\varphi$ defines a smooth embedding for sufficiently small $\ell$.  Similar to the injectivity radius,  the constant
\begin{align*}
\ell_a = \oh\inf_{p\in Y} \big( \sup \{ \ell>0 ~|~ \varphi\text{ defines a smooth embedding on } C_\ell\times I_\ell \text{ centered at }p \} \big)
\end{align*}
is strictly positive, and depends only on the contact form $a$ and the adapted metric $\dd\mr{s}^2$.  For any $p\in Y$, the adapted coordinate chart at $p$ is $\varphi(C_{\ell_a}\times I_{\ell_a})$.  For simplicity, the subscript $\ell_a$ will be suppressed.  The adapted coordinate chart has the following properties.
\begin{enumerate}
\item The Reeb vector field $v$ is $\pl_z$, and $\dd a = 2 B\,\dd x\wedge\dd y$.  The function $B$ is positive, and independent function of $z$.  As $(x,y)\to 0$, $B(x,y) = 1 + \mathcal{O}(x^2+y^2)$.
\item The metric $\dd\mr{s}^2$ is equal to ${\rm d}x^2+{\rm d}y^2+{\rm d}z^2+\mathfrak{h}$ where $\mathfrak{h}$ obeys:
\begin{enumerate}
\item $\mathfrak{h}(\pl_z,\pl_z) = 0$;
\item as a symmetric $2$-tensor measured by $\dd x^2+\dd y^2$, the restriction $\mathfrak{h}|_{z=0} = \mathcal{O}(x^2+y^2)$ as $(x,y)\to 0$.
\end{enumerate}
\item Since $\varphi$ is an embedding, the image of the disks $C_{\ell_a}\times\{z\}$ are transverse to the Reeb vector field $v$ for any $z\in I_{\ell_a}$.  These disks are called the \emph{transverse disks}, and are denoted by $C_z$.
\end{enumerate}

%%%%%%%%%%%%%%%%%%
\subsubsection{Dirac operator in adapted coordinate chart}\label{sec_adapted_Dirac}
In this step, we introduce a \emph{transverse-Reeb exponential gauge} to trivialize the bundle $K^{-1}$ and $E$.  The exponential coordinate and exponential gauge is a standard trick in differential geometry and gauge theory.  The detail of the computation will be presented in \S\ref{apx_adapted_chart}.

Consider the adapted metric $\dd\mr{s}^2$.  Parallel transport $e_1$ and $e_2$ along radial geodesics on $C_0$.  Denote the resulting vector fields by $u_1$ and $u_2$.  They are linearly independent with the Reeb vector field $v$, but need not to be orthonormal.  The Gram--Schmidt process on $\{v,u_1,u_2\}$ produces an orthonormal frame $\{v,e_1,e_2\}$ on $C_0$.  Note that the Gram--Schmidt process does nothing at $T_pY$, and the notation is consistent.  Then parallel transport $\{v, e_1,e_2\}$ along the integral curves of $v$.  It ends up with a smooth, orthonormal frame on the adapted chart.  Denote the frame by by $\{v,e_1,e_2\}$.  The unit-normed section $\frac{1}{\sqrt{2}}(e_1 - ie_2)$ trivialize the bundle $K^{-1}$.  The bundle $E$ is trivialized in a similar way:  start with any unit-normed section at $p$, parallel transport along radial geodesic on $C_0$, and then parallel transport along the integral curves of $v$.  Since $E$ is a line bundle, the trivialization of $E$ does not require the Gram--Schmidt process.

With such a unitary trivialization of $E\oplus K^{-1}E$, the sections $\mr{\alpha}$ and $\mr{\beta}$  are identified with complex valued functions on $C\times I$.  Remember that $v=\pl_z$.  The expression of $e_1$ and $e_2$ in $\pl_x$, $\pl_y$ and $\pl_z$ can be found by the standard Jacobi field computation.  The Dirac operator takes the following form:
\begin{align}\label{eqn_Dirac_00}\left\{\begin{aligned}
\pr_1(\mr{D}_r\mr{\psi}) &= \frac{r}{2}\mr{\alpha} + i\pl_z\mr{\alpha} + \mu_0\mr{\alpha} + \dbrs\mr{\beta} - i\bar{\mu}_1\pl_z\mr{\beta} +\bar{\mu}_2\mr{\beta} ~,\\
\pr_2(\mr{D}_r\mr{\psi}) &= \dbr\mr{\alpha} - i\mu_1\pl_z\mr{\alpha} + \mu_2\mr{\alpha} - (\frac{r}{2}+c_0)\mr{\beta} - i\pl_z\mr{\beta} + \mu_3\mr{\beta}
\end{aligned}\right.\end{align}
where $\dbr$ and $\dbrs$ consist of taking derivatives in $x$ and $y$, but not in $z$.

Besides the $\pm{r}/{2}$ terms, all the other terms are independent of $r$.  Namely, they depend only on the contact form $a$, the adapted metric $\dd\mr{s}^2$ and the connection $A_0$.  The coefficients $\mu_0$ and $\mu_3$ are real-valued smooth functions, and $\mu_1$ and $\mu_2$ are complex-valued functions.

The operators $\dbr$ and $\dbrs$ are first order elliptic operators on $C_z$.  In other words, they are a smooth family of Cauchy--Riemann operators.  $\dbr$ and $\dbrs$ are \emph{almost adjoint} to each other in the following sense.  The volume form of the adapted metric $\dd\mr{s}^2$ is $\oh a\wedge\dd a = B{\rm d}x\wedge{\rm d}y\wedge{\rm d}z$.  Let $\omega = B{\rm d}x\wedge{\rm d}y$.  The self-adjointness of $\mr{D}_r$ and the $z$-independence of $B$ imply that
\begin{align}\label{eqn_selfadjoint_00}
\int_{C_z}\big( \langle\dbr\mr{\alpha},\mr{\beta}\rangle - \langle\mr{\alpha},\dbrs\mr{\beta}\rangle \big)\omega = \int_{C_z} -i(\pl_z\mu_1)\langle\mr{\alpha},\mr{\beta}\rangle\omega
\end{align}
for any $z\in I$ and any $\mr{\alpha}$ and $\mr{\beta}$ with compact support in $C_z$.

On the zero slice $C_0$, the frame $\{e_1,e_2\}$ differs from the usual exponential frame $\{u_1,u_2\}$ by the Gram--Schmidt process, which leads to a $\mathcal{O}(\sqrt{x^2+y^2})$ difference.  By the standard expansion in the exponential gauge, the coefficients of (\ref{eqn_Dirac_00}) on $C_0$ satisfies
\begin{enumerate}
\item $|\mu_j|\leq c_{45}\sqrt{x^2+y^2}$ for $j=0,1,2,3$;
\item $\dbr = \pl_x + i\pl_y + \mu_4\pl_x + \mu_5\pl_y$ where $\mu_4$ and $\mu_5$ are complex-valued functions which are also bounded by $c_{45}\sqrt{x^2+y^2}$.
\end{enumerate}
The constant $c_{45}$ is determined by the contact form $a$, the adapted metric $\dd\mr{s}^2$ and the connection $A_0$.

%%%%%%%%%%%%%%%%%%
\subsubsection{Integral estimate over a transverse disk}

For any $p\in Y$, define $S(p,{\psi};\epsilon)$ to be the $2$-dimensional integral
\begin{align*}  S(p,{\psi};\epsilon) = \int_{C_{0,\epsilon}}|\mr{\alpha}|^2\omega = \int_{C_{0,\epsilon}}|\alpha|^2\Omega^2\omega  \end{align*}
where $C_{0,\epsilon}$ is the geodesic disk $\{\sqrt{x^2+y^2}\leq\epsilon\}$ on $C_0$.

\begin{prop}\label{prop_estimate_01}
There exists a constant $c_{46}$ determined by the contact form $a$, the conformally adapted metric $\dd s^2$ and the connection $A_0$ such that the following holds.  For any $r\geq c_{46}$ and $1\leq\lambda\leq \oh r^\oh$, suppose that $\psi\in\mathcal{V}(r,\lambda)$.  Then
\begin{align*}
S(p,\psi;\epsilon) &\leq c_{46}(\lambda + r^{-\oh}\epsilon^{-1}\lambda + r^\oh\epsilon^2\lambda^2)\int_Y|\psi|^2
\end{align*}
for any $p\in Y$ and any $\epsilon\leq\frac{1}{4}\ell_a$.
\end{prop}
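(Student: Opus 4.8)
Work throughout with the adapted metric $\dd\mr{s}^2$ and in the adapted coordinate chart centered at $p$, using the transverse--Reeb exponential gauge, so that the Dirac equation has the form (\ref{eqn_Dirac_00}).  Since $\mr{\psi}=\Omega\psi$ and the conformal factor is bounded, all of $\int_Y|\mr\alpha|^2$, $\sup_Y|\mr\alpha|$, $\|\mr{D}_r\mr{\psi}\|_{L^2}$ are uniformly comparable with the corresponding quantities for $\psi$; in particular $\|\mr{D}_r\mr{\psi}\|_{L^2}\le c\|D_r\psi\|_{L^2}\le c\lambda\|\psi\|_{L^2}$ by (\ref{eqn_conformal_03}) and (\ref{eqn_estimate_00}), while Lemma \ref{lem_CV_beta}, Proposition \ref{prop_estimate_00} and Proposition \ref{prop_estimate_02} control $\beta$ and $\nabla_r\beta$ both in $L^2$ and in sup-norm.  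The key algebraic input is the $\pr_1$-component of (\ref{eqn_Dirac_00}): it can be rewritten as $\pl_z\mr\alpha=\tfrac{ir}{2}\mr\alpha+g$, where $g=-i\,\pr_1(\mr{D}_r\mr{\psi})+i\mu_0\mr\alpha+i\,\dbrs\mr\beta+\bar\mu_1\pl_z\mr\beta+i\bar\mu_2\mr\beta$.  Thus, after de-rotating by setting $\tilde\alpha=e^{-irz/2}\mr\alpha$, one has $\pl_z\tilde\alpha=e^{-irz/2}g$ and, crucially, $|\tilde\alpha|=|\mr\alpha|$ pointwise; that is, $\mr\alpha$ is almost covariantly constant along the Reeb direction, with deviation measured by $g$.

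The plan is to transfer the disk integral $S(p,\psi;\epsilon)$ onto nearby transverse disks.  Fix $L\le\ell_a$, and choose a disk cutoff $\chi$ depending only on $\sqrt{x^2+y^2}$, equal to $1$ on $C_{0,\epsilon}$ and supported in $C_{0,2\epsilon}$, with $|\dd\chi|\le c\epsilon^{-1}$; note $\chi$ is $z$-independent.  Writing $S_\chi(z)=\int_{C_z}\chi^2|\mr\alpha|^2\omega$, one computes $\tfrac{d}{dz}S_\chi(z)=2\re\int_{C_z}\chi^2\langle g,\mr\alpha\rangle\omega$ (the $\tfrac{ir}{2}|\mr\alpha|^2$ term drops out on taking real parts).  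Recovering $S_\chi(0)$ from nearby slices, averaging over $z_0\in[-L,L]$ and using $S(p,\psi;\epsilon)\le S_\chi(0)$ gives
$$ S(p,\psi;\epsilon)\;\le\;\frac{c}{L}\int_Y|\psi|^2\;+\;c\int_{-L}^{L}\Big|\,\re\!\int_{C_z}\chi^2\langle g,\mr\alpha\rangle\,\omega\,\Big|\,\dd z, $$
the first term because the $L^2$-mass of $\mr\alpha$ over the solid cylinder $\varphi(C_{\ell_a,2\epsilon}\times[-L,L])$ is at most $\int_Y|\mr\alpha|^2$.  It remains to estimate the second (``remainder'') term and then to optimize $L$.

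In $\re\langle g,\mr\alpha\rangle$ the $i\mu_0\mr\alpha$ term contributes zero; the $\pr_1(\mr{D}_r\mr{\psi})$ term integrates to $\lesssim\lambda\|\psi\|_{L^2}^2$, which together with the $L^{-1}\int_Y|\psi|^2$ term and the choice $L\sim\lambda^{-1}$ produces the main piece $c\lambda\int_Y|\psi|^2$.  The genuinely delicate terms are $\dbrs\mr\beta$ and $\mu_1\pl_z\mr\beta$: because $\nabla_r$ carries the $-ira$-shift, $\nabla_r\beta$ is \emph{large} in $L^2$, of size $\sim r^{1/2}\lambda\|\psi\|$, so these cannot merely be bounded by $\|\nabla_r\beta\|$.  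For $\dbrs\mr\beta$ I would integrate the cross term $\int_{C_z}\chi^2\langle\dbrs\mr\beta,\mr\alpha\rangle\omega$ by parts on the transverse disk using the almost-adjointness (\ref{eqn_selfadjoint_00}) — legitimate since $\chi^2\mr\alpha$ has compact support — and then substitute the $\pr_2$-component of (\ref{eqn_Dirac_00}) for $\dbr\mr\alpha$; the dominant $\tfrac r2|\mr\beta|^2$ piece is real and so drops out (or telescopes, coming from the $i\pl_z\mr\beta$ term), leaving terms controlled by $\|\beta\|_{L^2}$ and by $\|\mr D_r\mr\psi\|_{L^2}$, plus a term carrying the cutoff-gradient factor $\epsilon^{-1}$ — this is the origin of the $r^{-1/2}\epsilon^{-1}\lambda$ piece.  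For $\mu_1\pl_z\mr\beta$ and the residual $\dbrs$-contributions one uses that the horizontal part of the contact form is $O(\epsilon)$ on the slab (it is $z$-independent and vanishes to first order on $C_0$) and that the coefficients $\mu_j$ are $O(\epsilon+L)$ there by the transverse--Reeb exponential gauge; combined with $\pl_z\mr\alpha\approx\tfrac{ir}{2}\mr\alpha$ and $\|\beta\|_{L^2}^2\le cr^{-1}\lambda^2\|\psi\|^2$, this yields the $r^{1/2}\epsilon^2\lambda^2$ piece.  Choosing $L$ of size roughly $\lambda^{-1}$ (or $r^{-1/2}$, whichever is smaller) then assembles the three stated contributions.

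The main obstacle is exactly this last point: since $\nabla_r\beta$ is not small, the two $\beta$-derivative terms in $g$ must be tamed entirely through the disk integration by parts (which converts them into the small quantity $\|\mr\beta\|$) and through the $\epsilon$- and $L$-smallness of the geometric coefficients; one must then verify, by a careful bookkeeping over the whole range $1\le\lambda\le\oh r^{1/2}$ and $\epsilon\le\tfrac14\ell_a$, that after the $L$-optimization none of the resulting terms is worse than $\lambda$, $r^{-1/2}\epsilon^{-1}\lambda$, or $r^{1/2}\epsilon^2\lambda^2$.
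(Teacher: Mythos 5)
Your overall skeleton is the same as the paper's: work in the adapted chart, monitor the slice integral of $|\mr\alpha|^2$ across transverse disks, use the $z$-derivative together with the almost-adjointness (\ref{eqn_selfadjoint_00}) and the $\pr_2$-equation of (\ref{eqn_Dirac_00}) to cancel the $\dbrs\mr\beta$ contribution, average in $z$, and convert the leftover $\beta$-terms with Lemma \ref{lem_CV_beta}, Proposition \ref{prop_estimate_00} and Lemma \ref{lem_estimate_00}.  However, there is a genuine gap at the decisive point, namely the $r$-sized cross terms carrying $\mu_1$.  Your proposal handles $\bar\mu_1\pl_z\mr\beta$ by the crude bound $|\mu_1|\leq c(\epsilon+L)$ on the slab together with $\pl_z\mr\alpha\approx\tfrac{ir}{2}\mr\alpha$ (equivalently $\pl_z\mr\beta\approx-\tfrac{ir}{2}\mr\beta$) and the $L^2$-bound on $\beta$; this produces a term of size $c(\epsilon+L)\,r\,\|\mr\alpha\|_{L^2}\|\mr\beta\|_{L^2}\leq c(\epsilon+L)r^{\oh}\lambda\int_Y|\psi|^2$, not the claimed $r^{\oh}\epsilon^2\lambda^2$ piece.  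This is not bounded by the stated right-hand side: take $\lambda=1$ and $\epsilon=r^{-\frac14}$, so the allowed bound is $O(1)\int_Y|\psi|^2$ while your term is at least $\epsilon r^{\oh}\lambda=r^{\frac14}$ times $\int_Y|\psi|^2$, independently of how $L$ is chosen (shrinking $L$ only inflates the averaging term $cL^{-1}\int_Y|\psi|^2$).  Moreover, your own manipulation of the $\dbrs\mr\beta$ term secretly produces the matching large term: substituting the $\pr_2$-equation for $\dbr\mr\alpha$ yields not only the real $\tfrac r2|\mr\beta|^2$ piece (which indeed drops) and the telescoping $i\pl_z\mr\beta$ piece, but also $-i\mu_1\pl_z\mr\alpha$ paired with $\mr\beta$, again of size $r|\mu_1||\mr\alpha||\mr\beta|$.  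The point you are missing is that these two large terms combine exactly: $2\re\big(\mu_1\langle\pl_z\mr\alpha,\mr\beta\rangle+\mu_1\langle\mr\alpha,\pl_z\mr\beta\rangle\big)=\pl_z\big(2\re(\mu_1\langle\mr\alpha,\mr\beta\rangle)\big)-2\re\big((\pl_z\mu_1)\langle\mr\alpha,\mr\beta\rangle\big)$, i.e.\ a total $z$-derivative plus a bounded-coefficient term.  This is precisely why the paper monitors the corrected functional $\tilde S(z)=\int_{C_z}\tilde\chi_\epsilon\big(|\mr\alpha|^2-|\mr\beta|^2-2\re(\mu_1\langle\mr\alpha,\mr\beta\rangle)\big)\omega$ of (\ref{eqn_energy_01}), for which the only remaining terms in $\tfrac{\dd}{\dd z}\tilde S$ are the cutoff-gradient term and $|\mr\psi||\mr D_r\mr\psi|$.

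Two further consequences for your write-up.  First, your displayed reduction puts the absolute value inside the $z$-integral, $\int_{-L}^{L}\big|\re\int_{C_z}\chi^2\langle g,\mr\alpha\rangle\omega\big|\dd z$, which forecloses exactly the telescoping/total-derivative structure you invoke; you must keep the signed integrand (or, equivalently, build the cross term and $-|\mr\beta|^2$ into the monitored quantity) before estimating.  Second, the $r^{\oh}\epsilon^2\lambda^2$ term does not come from the $\mu_1\pl_z\mr\beta$ bound at all: it arises only at the end, when the cross term and the $|\mr\beta|^2$ correction are removed on the zero slice using $|\mu_1|\leq c\epsilon$ on $C_0$ and $\int_{\{\tilde\rho\leq2\epsilon\}}|\mr\beta|^2\leq c\epsilon^2\sup_Y|\beta|^2\leq c\epsilon^2 r^{\oh}\lambda^2\int_Y|\psi|^2$ via Proposition \ref{prop_estimate_00} and Lemma \ref{lem_estimate_00}.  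Also, the $L$-optimization is unnecessary: with the cancellation in place one can take the full length $\ell_a$, since the gradient contributions are bounded by integrals over all of $Y$ and the averaging term is then $O(1)\int_Y|\psi|^2\leq\lambda\int_Y|\psi|^2$.
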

\begin{proof}
Let $\tilde{\chi}_\epsilon$ be a cut-off function which depends on $\tilde{\rho} = \sqrt{x^2+y^2}$ with $\tilde{\chi}_\epsilon(\tilde{\rho}) = 1$ for $\tilde{\rho}\leq\epsilon$ and $\tilde{\chi}_\epsilon(\tilde{\rho}) = 0$ for $\tilde{\rho}\geq2\epsilon$.  Apply (\ref{eqn_Dirac_00}) to compute the rate of change of slice integrals:
\begin{align}\label{eqn_slice_change}\begin{split}
\frac{\dd}{\dd z}\big(\int_{C_z}\tilde{\chi}_\epsilon|\mr{\alpha}|^2\omega\big) &= 2\int_{C_z} \re\big( - i\tilde{\chi}_\epsilon\langle\mr{\alpha},\dbrs\mr{\beta}\rangle +  \tilde{\chi}_\epsilon\mu_1\langle\mr{\alpha},\pl_z\mr{\beta}\rangle \\
&\qquad\qquad - i\tilde{\chi}_\epsilon\mu_2\langle\mr{\alpha},\mr{\beta}\rangle + i\tilde{\chi}_\epsilon\langle\mr{\alpha},\mr{D}_r\mr{\psi}\rangle \big)\omega ~, \\
\frac{\dd}{\dd z}\big(\int_{C_z}\tilde{\chi}_\epsilon|\mr{\beta}|^2\omega\big) &= 2\int_{C_z} \re\big( - i\tilde{\chi}_\epsilon\langle\dbr\mr{\alpha},\mr{\beta}\rangle + \tilde{\chi}_\epsilon\mu_1\langle\pl_z\mr{\alpha},\mr{\beta}\rangle \\
&\qquad\qquad - i\tilde{\chi}_\epsilon\mu_2\langle\mr{\alpha},\mr{\beta}\rangle + i\tilde{\chi}_\epsilon\langle \mr{D}_r\mr{\psi},\mr{\beta}\rangle \big)\omega ~.
\end{split}\end{align}
Let $\tilde{S}(z)$ to be the following integral
\begin{align}\label{eqn_energy_01}   \tilde{S}(z) = \int_{C_z}\tilde{\chi}_\epsilon\big(|\mr{\alpha}|^2-|\mr{\beta}|^2-2\re(\mu_1\langle\mr{\alpha},\mr{\beta}\rangle)\big)\omega~.\end{align}
Since $\dbr$ and $\dbrs$ are almost adjoint (\ref{eqn_selfadjoint_00}) to each other, (\ref{eqn_slice_change}) leads to the following gradient estimate:
\begin{align}
\big| \frac{\dd}{\dd z}\tilde{S}(z) \big| &\leq c_{47}\int_{C_z} \big( |\dd\tilde{\chi}_\epsilon||\mr{\alpha}||\mr{\beta}| + \tilde{\chi}_\epsilon|\mr{\psi}||\mr{D}_r\mr{\psi}| \big)\omega ~.
\end{align}
Its integration says that
\begin{align}
\big| \tilde{S}(w) - \tilde{S}(0) \big| &= \big| \int_0^w (\frac{\dd}{\dd z}\tilde{S}(z))\dd z \big| \notag \\
&\leq \int_0^w\int_{C_z} \big( |\dd\tilde{\chi}_\epsilon||\mr{\alpha}||\mr{\beta}| + \tilde{\chi}_\epsilon|\mr{\psi}||\mr{D}_r\mr{\psi}| \big)B\,\dd x\,\dd y\,\dd z \notag \\
&\leq c_{47}\int_Y \big(|\dd\tilde{\chi}_\epsilon||\mr{\alpha}||\mr{\beta}| + \tilde{\chi}_\epsilon|\mr{\psi}||\mr{D}_r\mr{\psi}|\big) a\wedge\dd a \notag\\
&\leq c_{48}(1 + r^{-\oh}\epsilon^{-1})\lambda\int_Y|\psi|^2 \label{eqn_estimate_07}
\end{align}
for any $w\in[-\ell_a,\ell_a]$.  The last inequality follows from Lemma \ref{lem_CV_beta}, (\ref{eqn_estimate_00}) and (\ref{eqn_conformal_03}).

The quantity $\tilde{S}(0)$ can be written as
$$  \tilde{S}(0) = \frac{1}{2\ell_a}\big( -\int_{-\ell_a}^{\ell_a} (\tilde{S}(z)-\tilde{S}(0))\dd z + \int_{-\ell_a}^{\ell_a}\tilde{S}(z)\dd z\big) ~.  $$
The first integral is bounded by (\ref{eqn_estimate_07}), and the second integral is automatically bounded by $\int_Y|\psi|^2$.  Hence,
$$   |\tilde{S}(0)| \leq c_{49}(1 + r^{-\oh}\epsilon^{-1})\lambda\int_Y|\psi|^2 ~.   $$  Since $\mu_1$ is uniformly bounded on $C_0$, we apply the triangle inequality and the Cauchy--Schwarz inequality on (\ref{eqn_energy_01}) to conclude that
\begin{align}\label{eqn_estimate_20}
\int_{C_0}\tilde{\chi}_\epsilon|\mr{\alpha}|^2\,\omega &\leq c_{50}(1 + r^{-\oh}\epsilon^{-1})\lambda\int_Y|\psi|^2 + c_{50}\int_{\{\tilde{\rho}\leq2\epsilon\}\subset C_0}|\mr{\beta}|^2\,\omega ~.
\end{align}
The last term is less than $c_{51}\epsilon^2\sup_Y|\beta|^2$.   According to Proposition \ref{prop_estimate_00} and Lemma \ref{lem_estimate_00},
$$   \sup_Y|\beta|^2\leq c_{51}(r^\oh + r^{-\oh}\lambda^4)\int_Y|\psi|^2\leq 2c_{51}r^\oh\lambda^2\int_Y|\psi|^2 ~.   $$
Plugging it into (\ref{eqn_estimate_20}) finishes the proof of the proposition.
\end{proof}

%%%%%%%%%%%%%%%%%%
\subsection{Pointwise estimate on $\alpha$}
The main purpose of this subsection is to prove the pointwise estimate on $\alpha$.

\begin{proof}[Proof of Theorem \ref{thm_point_estimate_00}]
Let $p_0$ be the point where $|\alpha|$ achieves its maximum.  It suffices to estimate $\mr{\alpha}(p_0) = (\Omega\alpha)(p_0)$.  Let $x,y,z$ be the adapted coordinate at $p_0$, and let $\tilde{\rho}$ be $\sqrt{x^2+y^2}$.  Let $\tilde{\chi}_\epsilon(\tilde{\rho})$ be the (slice-wise) cut-off function as introduced in the proof of Proposition \ref{prop_estimate_01}.  The precise value of $\epsilon$ will be chosen later.

Multiply the first equation of (\ref{eqn_Dirac_00}) by $\mu_1$, and add it to the second equation.
\begin{align*}
\dbr\mr{\alpha} &= i\mu_1\pl_z\mr{\alpha} - \mu_2\mr{\alpha} - \pr_2(\mr{D}_r\mr{\beta}) + \pr_2(\mr{D}_r\mr{\psi}) \\
&= - \mu_1\big( \frac{r}{2}\mr{\alpha} + \mu_0\mr{\alpha} + \pr_1(\mr{D}_r\mr{\beta}) - \pr_1(\mr{D}_r\mr{\psi}) \big) \\
&\qquad - \mu_2\mr{\alpha} - \pr_2(\mr{D}_r\mr{\beta}) + \pr_2(\mr{D}_r\mr{\psi}) ~.
\end{align*}
According to (\ref{eqn_Dirac_00}) and the discussion in \S\ref{sec_adapted_Dirac}, the restriction of the equation on the slice $C_0$ reads:
\begin{align}\label{eqn_Dirac_01} \begin{split}
(\pl_x + i\pl_y)(\tilde{\chi}_\epsilon\mr{\alpha}) &= \big((\pl_x + i\pl_y)(\tilde{\chi}_\epsilon)\big)\mr{\alpha} - \tilde{\chi}_\epsilon\big((\mu_4\pl_x + \mu_5\pl_y)\mr{\alpha}\big) \\
&\quad - \tilde{\chi}_\epsilon(\mu_1\frac{r}{2}+\mu_0\mu_1+\mu_2)\mr{\alpha} - \tilde{\chi}_\epsilon(\pr_2 + \mu_1\pr_1)(\mr{D}_r\mr{\beta}) \\
&\qquad + \tilde{\chi}_\epsilon\pr_2(\mr{D}_r\mr{\psi}) + \tilde{\chi}_\epsilon\mu_1\pr_1(\mr{D}_r\mr{\psi}) ~.
\end{split} \end{align}
The value of $\mr{\alpha}$ at $p_0$ can be found by the Cauchy integral formula for smooth functions.  It is equal to the integral of the right hand side of (\ref{eqn_Dirac_01}) against
\begin{align*}
-\frac{\dd x\wedge\dd y}{4\pi(x+iy)} \qquad\text{ over the disk }~ C_0 ~.
\end{align*}
The area element $\dd x\wedge\dd y=\frac{1}{B}\omega$ is uniformly equivalent to $\omega = B\dd x\wedge\dd y$.  Due to the uniformly equivalence, the crucial term is the factor $1/(x+iy)$.

We divide the right hand side of (\ref{eqn_Dirac_01}) into six terms.  Their Cauchy integrals are estimated as follows.
\begin{enumerate}

\item By Proposition \ref{prop_estimate_01}, the Cauchy integral of the first term is no greater than
\begin{align*}
\Big|\int_{C_0} \big(\frac{(\pl_x + i\pl_y)(\tilde{\chi}_\epsilon)}{x+iy}\mr{\alpha}\big)\Big| &\leq c_{55}\big(\int_{C_0}\frac{|\dd\tilde{\chi}_\epsilon|^2}{|\tilde{\rho}|^2}\big)^\oh \big(\int_{C_0}\tilde{\chi}_{2\epsilon}|\mr{\alpha}|^2\big)^\oh \\
&\leq c_{56}\,\epsilon^{-1}(\lambda^\oh + r^{-\frac{1}{4}}\epsilon^{-\oh}\lambda^\oh + r^{\frac{1}{4}}\epsilon\lambda) \big(\int_Y|\psi|^2\big)^\oh ~.
\end{align*}

\item After integration by parts, the Cauchy integral of the second term can be estimated by the same argument.  It is less than or equal to
\begin{align*}
&c_{57}\int_{C_0}\frac{\tilde{\chi}_\epsilon}{|\tilde{\rho}|}(1+|\dd\mu_4|+|\dd\mu_5|)|\mr{\alpha}| + c_{57}\big(\int_{C_0}\frac{(|\mu_4|^2+|\mu_5|^2)|\dd\tilde{\chi}_\epsilon|^2}{|\tilde{\rho}|^2}\big)^\oh \big(\int_{C_0}\tilde{\chi}_{2\epsilon}|\mr{\alpha}|^2\big)^\oh \\
\leq\,& c_{58}\,\epsilon\sup_Y|\alpha| + c_{58}(\lambda^\oh + r^{-\frac{1}{4}}\epsilon^{-\oh}\lambda^\oh + r^{\frac{1}{4}}\epsilon\lambda) \big(\int_Y|\psi|^2\big)^\oh ~.
\end{align*}

\item By the Cauchy--Schwarz inequality and Proposition \ref{prop_estimate_01}, the Cauchy integral of the third term is no greater than
\begin{align*}
&c_{59}\,r\big(\int_{C_0}\frac{\tilde{\chi}_\epsilon|\mu_1|^2}{|\tilde{\rho}|^2}\big)^\oh \big(\int_{C_0}\tilde{\chi}_\epsilon|\mr{\alpha}|^2\big)^\oh \\
\leq\,& c_{60}\,r\epsilon (\lambda^\oh + r^{-\frac{1}{4}}\epsilon^{-\oh}\lambda^\oh + r^{\frac{1}{4}}\epsilon\lambda) \big(\int_Y|\psi|^2\big)^\oh ~.
\end{align*}

\item To estimate the fourth term, note that $|D_r\beta|\leq|\nabla_r\beta|$.  Invoke Proposition \ref{prop_estimate_02} and Lemma \ref{lem_estimate_00} to bound $\sup|\nabla_r\beta|$.  The Cauchy integral of the fourth term is less than or equal to
\begin{align*}
c_{61}(\sup_Y|\nabla_r\beta|)\int_{C_0}\frac{|\tilde{\chi}_\epsilon|}{|\tilde{\rho}|} &\leq c_{62}\,\epsilon(r^{\frac{3}{4}} + r^{\frac{1}{4}}\lambda^2) \big(\int_Y|\psi|^2\big)^\oh ~.
\end{align*}

\item Since $D_r\psi$ still belongs to $\CV(r,\lambda)$, we can apply Proposition \ref{prop_estimate_00}, Corollary \ref{cor_estimate_00} and (\ref{eqn_estimate_00}) to bound $\sup|\pr_2(D_r\psi)|$.  The Cauchy integral of the fifth term is no greater than
\begin{align*}
c_{63}(\sup_Y|\pr_2(D_r\psi)|) \int_{C_0}\frac{|\tilde{\chi}_\epsilon|}{\tilde{\rho}} &\leq c_{64}\,\epsilon(r^{\frac{1}{4}}\lambda + r^{-\frac{1}{4}}\lambda^3) \big(\int_Y|\psi|^2\big)^\oh ~.
\end{align*}

\item With the help of Corollary \ref{cor_estimate_00}, the Cauchy integral of the last term is less than or equal to
\begin{align*}
c_{65}(\sup_Y|D_r\psi|)\int_{C_0}\frac{\tilde{\chi}_\epsilon|\mu_1|}{|\tilde{\rho}|} &\leq c_{66}\,\epsilon^2(r^{\frac{3}{4}}\lambda) \big(\int_Y|\psi|^2\big)^\oh ~.
\end{align*}
\end{enumerate}
Set $\epsilon$ to be $r^{-\oh}$.  A straightforward computation on the above six estimates shows that
\begin{align} \sup_Y|\alpha| \leq c_{67}\big( r\lambda \int_Y|\psi|^2 \big)^\oh ~. \end{align}
With Proposition \ref{prop_estimate_00}, it completes the proof of Theorem \ref{thm_point_estimate_00}.
\end{proof}

%%%%%%%%%%%%%%%%%%%%%%%%%%%%%%%%%%%%
\section{The Heat Kernel}\label{sec_heat_kernel}

Denote by $\pi_L$ and $\pi_R$ the respective projection from $(0,\infty)\times Y\times Y$ to the left and right hand factor of $Y$.  The heat kernel for $D_r^2$ is a smooth section of $\Hom(\pi_R^*\BS,\pi_L^*\BS)$ over $(0,\infty)\times Y\times Y$ given by
\begin{align}\label{eqn_heat_00}
H_r(t;p,q) &= \sum_j e^{-\lambda_j^2 t}\psi_j(p)\psi_j^\dagger(q)
\end{align}
where $\{\psi_j\}$ constitutes a complete, orthonormal basis of eigensections for $D_r$, and $\lambda_j$ is the corresponding eigenvalue.  As a function of $t$ and $p$ with $q$ fixed, the heat kernel obeys the equation
\begin{align}\label{eqn_heat_01}
\frac{\pl}{\pl t}H_r &= -D_r^2 H_r ~.
\end{align}
Moreover, the $t\to 0$ limit of $H_r$ exists as a bundle valued measure:
\begin{align}\label{eqn_heat_02}
\lim_{t\to 0} H_r(t;p,\,\cdot\,) &= \BI\,\delta_p(\,\cdot\,)
\end{align}
where $\BI$ is the identity homomorphism in $\End(\BS)$ and $\delta_p$ is the Dirac measure at $p$.  In other words, $\zeta(p) = \lim_{t\to0}\int_Y H_r(t;p,q)\zeta(q)\dd q$ for any $\zeta\in\CC^\infty(Y;\BS)$.

For any $q\in Y$, choose unitary identifications $E|_q\cong\BC$ and $EK^{-1}|_q\cong\BC$.  Consider the following smooth section of $\pi^*\BS$ over $(0,\infty)\times Y$:
\begin{align}\label{eqn_heat_03}
h_{r,q}(t;p) &= \sum_je^{-\lambda_j^2t}\overline{\alpha_j(q)}\psi_j(p) ~.
\end{align}
Roughly speaking, it is the `first column' of $H_r$.  In particular, it obeys that heat equation (\ref{eqn_heat_01}), and
\begin{align}\label{eqn_heat_04}
\lim_{t\to0^+}\int_Y\langle\zeta(p),h_{r,q}(t;p)\rangle\dd p &= \pr_1\zeta(q)
\end{align}
for any $\zeta\in\CC^\infty(Y;\BS)$.  Here $\pr_1$ is the projection onto $E|_q\cong\BR$.

%%%%%%%%%%%%%%%%%%
\subsection{Integral estimate of the heat kernel}
There are standard parametrix techniques to generate small time asymptotic expansion of the heat kernel, see \cite[chapter 2]{ref_BGV} or \cite[section 2]{ref_Taubes_sf}.  In order to estimate the remainder term in the asymptotic expansion, it requires some estimate on the heat kernel.  The following proposition provides a $L^2$-estimate on the heat kernel.  One can compare it with \cite[Proposition 2.1]{ref_Taubes_sf}.

\begin{prop}\label{prop_heat_00}
There exists a constant $c_1$ determined by the contact form $a$, the conformally adapted metric $\dd s^2$ and the connection $A_0$ such that:
\begin{align*}
\int_Y |h_{r,q}(t;p)|^2\,\dd p &\leq c_1(r + rt^{-\oh} + t^{-\frac{3}{2}}e^{-\frac{1}{10}r\,t})
\end{align*}  for any $r\geq c_1$, $q\in Y$ and $t>0$.
\end{prop}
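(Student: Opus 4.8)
The plan is to bound the $L^2$-norm $\int_Y|h_{r,q}(t;p)|^2\,\dd p$ by splitting the eigenvalue sum in \eqref{eqn_heat_03} according to the size of $|\lambda_j|$, and to use Theorem~\ref{thm_point_estimate_00} to control the contribution of the small eigenvalues. Write $h_{r,q}(t;p)=\sum_j e^{-\lambda_j^2 t}\overline{\alpha_j(q)}\,\psi_j(p)$. Since the $\psi_j$ are $L^2$-orthonormal,
\begin{align*}
\int_Y|h_{r,q}(t;p)|^2\,\dd p &= \sum_j e^{-2\lambda_j^2 t}|\alpha_j(q)|^2 ~.
\end{align*}
So everything reduces to an estimate of $\sum_j e^{-2\lambda_j^2 t}|\alpha_j(q)|^2$, uniformly in $q\in Y$ and $r\geq c_1$.

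The first (and main) step is to dyadically decompose in $\lambda$. For the range of small eigenvalues $|\lambda_j|\leq\tfrac12 r^{\oh}$, I would use the pointwise bound of Corollary~\ref{cor_main_thm_00}(i), which gives $\sum_{|\lambda_j|\leq\lambda}|\psi_j(q)|^2\leq c\, r\lambda$ for $1\leq\lambda\leq\oh r^{\oh}$ (and, trivially, the case $\lambda\leq 1$ is absorbed into the $\lambda=1$ bound). Writing $N_q(\lambda)=\sum_{|\lambda_j|\leq\lambda}|\alpha_j(q)|^2\leq\sum_{|\lambda_j|\leq\lambda}|\psi_j(q)|^2\leq c\,r\max(\lambda,1)$, an Abel summation / Stieltjes integration by parts converts the sum into $\int_0^{\infty} e^{-2\lambda^2 t}\,\dd N_q(\lambda) = 4t\int_0^\infty \lambda e^{-2\lambda^2 t} N_q(\lambda)\,\dd\lambda + (\text{boundary terms})$. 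Inserting $N_q(\lambda)\leq c\,r(\lambda+1)$ on $[0,\tfrac12 r^{\oh}]$ and computing the resulting Gaussian integrals produces a bound of the shape $c(r + r t^{-\oh})$ for the small-eigenvalue part: the $\lambda$-weight contributes $r\int_0^\infty \lambda^2 e^{-2\lambda^2 t}\,\dd\lambda\sim r t^{-3/2}\cdot t = r t^{-\oh}$ after the factor $4t$, and the $+1$ contributes $r\int_0^\infty \lambda e^{-2\lambda^2 t}\,\dd\lambda\cdot 4t\sim r$.

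For the large eigenvalues $|\lambda_j|>\tfrac12 r^{\oh}$ I would instead use Corollary~\ref{cor_estimate_01}, which gives $\sum_{|\lambda_j|\leq\lambda}|\psi_j(q)|^2\leq c\,\lambda^3$ for $\lambda\geq\oh r^{\oh}$. The same Stieltjes-integration argument now yields $\int_{r^\oh/2}^\infty e^{-2\lambda^2 t}\,\dd N_q(\lambda)\leq c\,t\int_{r^\oh/2}^\infty \lambda^4 e^{-2\lambda^2 t}\,\dd\lambda + (\text{boundary})$. On this range $e^{-2\lambda^2 t}=e^{-\lambda^2 t/2}\,e^{-3\lambda^2 t/2}\leq e^{-rt/8}e^{-3\lambda^2 t/2}$, so we can pull out a factor $e^{-\frac{1}{8}rt}$ (which I would weaken to $e^{-\frac{1}{10}rt}$ to absorb the boundary term and the crude $\lambda^3$ at the lower endpoint) and are left with $e^{-\frac{1}{10}rt}\cdot t\int_0^\infty \lambda^4 e^{-\lambda^2 t}\,\dd\lambda\sim t\cdot t^{-5/2}\cdot e^{-\frac{1}{10}rt}=t^{-3/2}e^{-\frac{1}{10}rt}$, which is exactly the third term in the claimed estimate. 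Combining the two ranges gives $c(r+rt^{-\oh}+t^{-3/2}e^{-\frac{1}{10}rt})$, as desired.

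The step I expect to require the most care is the bookkeeping in the Stieltjes integration by parts: one must check that the boundary contributions at $\lambda=\oh r^{\oh}$ (where the two counting estimates are glued) and at $\lambda\to\infty$ (where $e^{-2\lambda^2 t}N_q(\lambda)\to 0$ by the polynomial growth of $N_q$ against the Gaussian decay) are genuinely dominated by the main terms, and that the constants in the exponent can be arranged so that the glued estimate produces the precise exponential weight $e^{-\frac1{10}rt}$ rather than something weaker. A minor additional point is to handle small $t$ uniformly: for $t\geq 1$ say, the Gaussian kills everything and the bound is trivial, whereas for $t\leq 1$ the three displayed terms are the operative ones; one should also note that $h_{r,q}$ is genuinely $L^2$ for every $t>0$ because of the exponential suppression, so all manipulations are justified. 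None of this is deep — the content is really Theorem~\ref{thm_point_estimate_00} and its corollaries feeding a Weyl-type counting estimate into a Laplace-transform computation.
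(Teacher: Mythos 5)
Your proposal is correct and follows essentially the same route as the paper: reduce to $\sum_j e^{-2\lambda_j^2t}|\alpha_j(q)|^2$, bound the small-eigenvalue range via Corollary \ref{cor_main_thm_00}(i) and the large range via Corollary \ref{cor_estimate_01}, and convert the counting bounds into Gaussian integrals by partial summation (the paper does this discretely over integer shells $k\leq\lambda_j^2<k+1$ rather than by a continuous Stieltjes integration, and splits at $|\lambda_j|<10$ and $k=[\tfrac{r}{10}]$ instead of at $\tfrac12 r^{\oh}$, but these are cosmetic differences). Your handling of the boundary terms and of the exponent $e^{-\frac{1}{10}rt}$ matches the paper's computation.
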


\begin{proof}
We may assume that $|\lambda_j|$ is non-decreasing in $j$.  Weyl's asymptotic formula (see \cite[Corollary 2.43]{ref_BGV}) says that $|\lambda_j|^2 = \CO(j^{\frac{1}{3}})$ as $j\to\infty$.  It follows that the $L^2$-integral of $h_{r,q}(t;p)$ can be computed term by term:
$$ \int_Y|h_{r,q}(t,p)|^2\dd p = \sum_j e^{-2\lambda_j^2t}|\alpha_j(q)|^2 $$
for any $t>0$.  Divide the summation into two parts: $|\lambda_j|<10$ and $|\lambda_j|\geq10$.  According to Corollary \ref{cor_main_thm_00}(i), the first part is less than or equal to $c_2r$.
%\begin{align*}
%\sum_{|\lambda_j|\leq 10}e^{-2\lambda_j^2t}|\alpha_j(q)|^2\leq \sum_{|\lambda_j|\leq 10}|\psi_j(q)|^2\leq cr ~.
%\end{align*}

For the second part, note that
\begin{align*}
t\sum_{i=0}^\infty e^{-2(k+i)t}= \frac{t}{1-e^{-2t}}e^{-2kt} \geq \oh e^{-2kt}
\end{align*}
for any $k\geq0$ and $t>0$.  By the trick of summation by parts,
\begin{align*} 
\sum_{k=100}^\infty \big(2te^{-2kt}\sum_{|\lambda_j|^2<k+1}|\psi_j(q)|^2\big) &\geq \sum_{k=100}^\infty\Big(2t\big(e^{-2kt}+e^{-2(k+1)t}+\cdots\big)\sum_{k\leq|\lambda_j|^2<k+1}|\psi_j(q)|^2\Big) \\
&\geq \sum_{k=100}^\infty\big(e^{-2kt}\sum_{k\leq|\lambda_j|^2<k+1}|\psi_j(q)|^2\big) \\
&\geq \sum_{|\lambda_j|\geq10}e^{-2\lambda_j^2t}|\alpha_j(q)|^2 ~.
\end{align*}
Hence, it suffices to estimate $\sum_{k=100}^\infty te^{-2kt}(\sum_{|\lambda_j|^2<k+1}|\psi_j(q)|^2)$.  When $k\leq[\frac{1}{10}r]$, apply Corollary \ref{cor_main_thm_00}(i) on $\sum_{|\lambda_j|^2<k+1}|\psi_j(q)|^2$; when $k>[\frac{1}{10}r]$, apply Corollary \ref{cor_estimate_01} on $\sum_{|\lambda_j|^2<k+1}|\psi_j(q)|^2$.  It follows that
\begin{align*}
\sum_{10\leq|\lambda_j|}e^{-2\lambda_j^2t}|\alpha_j(q)|^2 &\leq c_2t\big(\sum_{k=100}^{[\frac{1}{10}r]}e^{-2kt}rk^\oh\big) + c_2t\big(\sum_{k=[\frac{1}{10}r]}^\infty e^{-2kt}k^{\frac{3}{2}}\big) \\
&\leq c_3t\big(r\int_{100}^\infty e^{-2kt}k^\oh\dd k + \int_{\frac{1}{10}r}^\infty e^{-2kt}k^{\frac{3}{2}}\dd k\big) ~. %\\
%&\leq c(r\,t^{-\oh}e^{-\frac{1}{c}t} + t^{-\frac{3}{2}}e^{-\frac{1}{c}rt})
\end{align*}
Note that
\begin{align*}
&\int_0^\infty e^{-2kt}k^\oh\dd k = (32)^{-\oh}\pi^\oh t^{-\frac{3}{2}} ~,~\text{ and } \\
&\int_{\frac{1}{10}r}^\infty e^{-2kt}k^{\frac{3}{2}}\dd k \leq e^{-\frac{rt}{10}}\int_0^\infty e^{-kt}k^{\frac{3}{2}}\dd k
= \frac{3}{4}\pi^\oh t^{-\frac{5}{2}} e^{-\frac{1}{10}rt} ~.
\end{align*}
Combining these estimates gives
$$   \sum_j e^{-2\lambda_j^2t}|\alpha_j(q)|^2 \leq c_4(r + rt^{-\oh} + t^{-\frac{3}{2}} e^{-\frac{1}{10}rt}) ~,   $$
which finishes the proof of the proposition.
\end{proof}

%%%%%%%%%%%%%%%%%%
\subsection{Asymptotic expansion of the heat kernel}

%%%%%%%%%
\subsubsection{Local expression of $D_r^2$}\label{sec_local_Dr}
Consider the adapted metric $\dd\mr{s}^2 = \Omega^{-2}\dd s^2$ and the adapted coordinate at $q\in Y$.  With respect to the transverse-Reeb exponential gauge (\ref{eqn_Dirac_00}), the $r$-dependent terms of $D_r$ appear in the diagonal.  To compute the heat kernel of $D_r^2$, it is convenient to work with a gauge in which the $r$-dependent terms appear in the off-diagonal.

What follows explains such a gauge and the local expression of the Dirac operator.  The detail of the computation will appear in \S\ref{apx_adapted_chart}.  Consider the gauge transform
\begin{align*}
(\mb{\alpha},\mb{\beta}) = \exp\big(-\frac{i}{2}r(z + S(x,y))\big) (\mr{\alpha},\mr{\beta})
\end{align*}
where $S(x,y)$ is some $r$-independent quadratic polynomial in $x$ and $y$.  Basically, $S(x,y)$ is constructed from the linear term of $\mu_1$ in (\ref{eqn_Dirac_00}).  The gauge transform is defined only on the adapted chart.  With respect to this gauge, the Dirac operator $\mr{D}_r$ takes the following form:
\begin{align*}\left\{\begin{aligned}
\pr_1(\mr{D}_r\mb{\psi}) &= i\pl_z\mb{\alpha} - (\pl_x - i\pl_y)\mb{\beta} + \frac{r}{2}(x-iy)\mb{\beta} + \big(\sum_{j=1}^3\fb_1^j\pl_j\mb{\beta} + r\fb_4\mb{\beta} + \fb_2\mb{\beta}\big) ~,\\
\pr_2(\mr{D}_r\mb{\psi}) &= (\pl_x + i\pl_y)\mb{\alpha} + \frac{r}{2}(x+iy)\mb{\alpha} -i\pl_z\mb{\beta} + \big(-\sum_{j=1}^3\bar{\fb}_1^j\pl_j\mb{\alpha} + r\bar{\fb}_4\mb{\alpha} + \fb_3\mb{\alpha} - \fb_0\mb{\beta}\big) ~.
\end{aligned}\right.\end{align*}
where $\fb_{(\cdot)}$ are smooth functions on the adapted chart.  They satisfy
\begin{align}\label{eqn_fd_growth}
|\fb_0| &\leq c_5 ~,  &\sum_{j=1}^3|\fb_1^j| + |\fb_2| + |\fb_3| &\leq c_5|\bx| ~,  &|\fb_4| &\leq c_5|\bx|^2
\end{align}
where $|\bx| = (x^2+y^2+z^2)^\oh$.  The Dirac operator $\mr{D}_r$ is self-adjoint with respect to $\oh a\wedge\dd a$, which is $B\dd x\wedge\dd y\wedge\dd z$ on this adapted chart.

The local expression of $D_r$ can be derived by
$$   D_r\psi = \Omega^{-2}\mr{D}_r(\Omega\psi) = \Omega^{-1}\mr{D}_r\psi + \Omega^{-2}\mr{\cl}(\dd\Omega)\psi ~.   $$
Rescale $\psi$ by $\tilde{\psi} = (\Omega^{3}B)^\oh\psi$, and consider the operator
\begin{align}\label{eqn_fD_01}
\fD_r\tilde{\psi} = (\Omega^3 B)^{\oh} D_r\big( (\Omega^3 B)^{-\oh}\tilde{\psi} \big) ~.
\end{align}
Using the above expression of $\mr{D}_r$, the local expression of $\fD_r$ on $\tilde{\psi} = (\tilde{\alpha},\tilde{\beta})$ is
\begin{align}\label{eqn_heat_05}
\fD_r\tilde{\psi} &= \Omega^{-1}(q)\left[\begin{array}{l}
i\pl_z\tilde{\alpha} - (\pl_x - i\pl_y)\tilde{\beta} + \frac{r}{2}(x-iy)\tilde{\beta} \\
(\pl_x + i\pl_y)\tilde{\alpha} + \frac{r}{2}(x+iy)\tilde{\alpha} -i\pl_z\tilde{\beta}
\end{array}\right] + (r\fg_0+\fe_0)\tilde{\psi} + \sum_{j=1}^3\ff_0^j\pl_j\tilde{\psi}
\end{align}
where $\fe_0$, $\ff_0^j$ and $\fg_0$ are smooth $(2\times2)$ matrix-valued functions on the adapted chart.  In other words, we treat $\tilde{\psi} = (\tilde{\alpha},\tilde{\beta})\in\BC^2$ as a column vector, and those $(2\times2)$ matrices are endomorphisms of $\BC^2$.  These functions, $\fe_0$, $\ff_0^j$ and $\fg_0$, are determined by the contact form $a$, the metric $\dd s^2$ and the connection $A_0$; in particular, none depend on $r$.  Moreover, there exists a constant $c_6$ such that
\begin{align*}
|\fe_0| &\leq c_6 ~,    &\sum_{j=1}^3|\ff_0^j| &\leq c_6|\bx| ~,    &|\fg_0| &\leq c_6|\bx|^2
\end{align*}
where $|\bx| = (x^2+y^2+z^2)^\oh$.

Note that $\fD_r$ is self-adjoint with respect to the Euclidean measure $\dd x\,\dd y\,\dd z$ and the standard Hermitian pairing on $(\tilde{\alpha},\tilde{\beta})$.  The factor $(\Omega^3B)^\oh$ is used to normalize the measure, and this factor is usually referred as the half-density (\cite[p.65]{ref_BGV}).

The first term on the right hand side of (\ref{eqn_heat_05}) will be referred as the \emph{principal part} of $\fD_r$.  Let $\fL_r$ be the square of the principal part of $\fD_r$.  It is equal to
\begin{align}\label{eqn_heat_06}\left\{\begin{aligned}
\pr_1(\fL_r\tilde{\psi}) &= \Omega_q^{-2}\big(-\pl_z^2\tilde{\alpha} + \big(-4\pl_\xi\pl_{\bar{\xi}}\tilde{\alpha} + r\,\bar{\xi}\pl_{\bar{\xi}}\tilde{\alpha} - r\,\xi\pl_\xi\tilde{\alpha} + \frac{r}{4}|\xi|^2\tilde{\alpha}\big) - r\tilde{\alpha}\big) ~,\\
\pr_2(\fL_r\tilde{\psi}) &= \Omega_q^{-2}\big(-\pl_z^2\tilde{\beta} + \big(-4\pl_\xi\pl_{\bar{\xi}}\tilde{\beta} + r\,\bar{\xi}\pl_{\bar{\xi}}\tilde{\beta} - r\,\xi\pl_\xi\tilde{\beta} + \frac{r}{4}|\xi|^2\tilde{\beta}\big) + r\tilde{\beta}\big)
\end{aligned}\right.\end{align}
where $\Omega_q=\Omega(q)$, and $\xi$ is the complex coordinate $x+iy$.  Let $\fR_r = -\fD_r^2 + \fL_r$ be the remainder part of $-\fD_r^2$.  By squaring (\ref{eqn_heat_05}), $\fR_r$ has the following expression:
\begin{align}\label{eqn_heat_07}
\fR_r &= (\fe_2 + r\ff_2 + r^2\fh_2) + \sum_{j=1}^3(\fe_3^j + r\fg_3^j)\pl_j + \sum_{j,k=1}^3\ff_3^{jk}\pl_j\pl_k
\end{align}
where $\fe$, $\ff$, $\fg$ and $\fh$'s are $(2\times2)$ matrix-valued functions on the adapted chart.  They do not depend on $r$, and have the following significance:
\begin{align}\label{eqn_heat_08}
|\fe|&\leq c_7 ~,    &|\ff|&\leq c_7|\bx| ~,    &|\fg|&\leq c_7|\bx|^2 ~,    &|\fh|\leq c_7|\bx|^3
\end{align}
for all subscripts and superscripts.  It is not hard to see that $\fL_r$ is self-adjoint with respect to $\dd x\,\dd y\,\dd z$, and thus $\fR_r = -\fD_r^2 + \fL_r$ is also self-adjoint.

As a second order elliptic operator for $\BC^2$ valued functions on $\BR^3$, the heat kernel of $\fL_r$ is given by the Mehler's formula \cite[\S4.2]{ref_BGV}.  Let
\begin{align}\label{eqn_heat_09}\begin{split}
\kappa_r(t;(\xi_1,z_1),(\xi_2,z_2))  &= (4\pi)^{-\frac{3}{2}}\Omega_q{t^{-\oh}}\exp\big(-\frac{\Omega^2_q(z_1-z_2)^2}{4t}\big) \\
&\qquad \frac{r}{\sinh(\Omega^{-2}_q rt)}\exp\big(-\frac{r}{4}\coth(\Omega^{-2}_q rt)|\xi_1-\xi_2|^2 - \frac{r}{4}(\bar{\xi}_1\xi_2 - \xi_1\bar{\xi}_2)\big) ~.
\end{split}\end{align}
The function $\kappa_r$ is the heat kernel of (\ref{eqn_heat_06}) without the last term, $-r\tilde{\alpha}$ or $+r\tilde{\beta}$.  It follows that the heat kernel of $\fL_r$ is
\begin{align}\label{eqn_heat_10}
K_r(t;(\xi_1,z_1),(\xi_2,z_2)) = \kappa_r(t;(\xi_1,z_1),(\xi_2,z_2))
\left[\begin{array}{cc} e^{\Omega^{-2}_q rt} & 0 \\ 0 & e^{-\Omega^{-2}_q rt} \end{array}\right] ~.
\end{align}

%%%%%%%%%
\subsubsection{Trace of the heat kernel}

The first component of $h_{r,q}(t;p)$ at $p = q$ is canonically identified with a scalar, which is $\sum_j e^{-\lambda_j^2t}|\alpha_j(q)|^2$.  The following theorem studies its asymptotic expansion.

\begin{thm}\label{thm_heat_00}
There exists a constant $c_9$ determined by the contact form $a$, the conformally adapted metric $\dd s^2$ and the connection $A_0$ such that:
\begin{align*}
\Big|\big(\sum_j e^{-\lambda_j^2t}|\alpha_j(q)|^2\big) - \frac{1}{4\pi^{\frac{3}{2}}}\Omega_q^{-2}rt^{-\oh}\Big| &\leq c_9(t^{-\oh} + r^{\frac{9}{2}}t^4 + t^{-\frac{3}{2}}e^{-\oh rt} + r^{\frac{7}{2}}e^{-\frac{1}{c_9t}})
\end{align*}
for any $r\geq c_9$, $t\leq 1$ and $q\in Y$.
\end{thm}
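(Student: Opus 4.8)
The plan is a parametrix construction: build an approximate heat kernel for $\fD_r^2$ out of the explicit Mehler kernel $K_r$ of the model operator $\fL_r$, and control the discrepancy by Duhamel's formula together with the \emph{a priori} $L^2$ bound of Proposition \ref{prop_heat_00}. I work in the adapted coordinate chart centred at $q$ with the half-density normalization and the local expressions (\ref{eqn_heat_05})--(\ref{eqn_heat_10}), so that $\fD_r = (\Omega^3B)^{\oh}D_r(\Omega^3B)^{-\oh}$ and $-\fD_r^2 = -\fL_r + \fR_r$ with $\fR_r$ as in (\ref{eqn_heat_07})--(\ref{eqn_heat_08}).

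\emph{Step 1 (reduction to a diagonal heat entry).}  The rescaled sections $\tilde\psi_j = (\Omega^3B)^{\oh}\psi_j$ form an eigenbasis of $\fD_r$ with the same eigenvalues, orthonormal for $\dd x\,\dd y\,\dd z$, so the heat kernel of $\fD_r^2$ on the chart is $\tilde H_r(t;p,p') = \sum_j e^{-\lambda_j^2t}\tilde\psi_j(p)\tilde\psi_j(p')^\dagger = (\Omega^3B)^{\oh}(p)\,H_r(t;p,p')\,(\Omega^3B)^{\oh}(p')$.  Since $B(q)=1$, the upper-left entry on the diagonal at $q$ is $\tilde H_r(t;q,q)_{11} = \Omega_q^3\sum_j e^{-\lambda_j^2t}|\alpha_j(q)|^2$, so it suffices to show $\tilde H_r(t;q,q)_{11} = \tfrac{1}{4\pi^{3/2}}\Omega_q\,r\,t^{-\oh}$ up to $\Omega_q^3$ times the admissible error.

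\emph{Step 2 (Duhamel).}  Gluing the local Mehler kernels into a genuine parametrix on $Y\times Y$ by a partition of unity, only the piece centred at $q$ contributes to the diagonal value, so set $G_r(t;p) = \chi(p)K_r(t;p,q)$ with $\chi$ equal to $1$ near $q$ and supported in the chart.  Then $G_r(0^+;\cdot)=\delta_q$, and by (\ref{eqn_heat_06}) and $(\pl_t+\fL_r)K_r=0$ one has $(\pl_t+\fD_r^2)G_r = -\chi\,\fR_rK_r + [\fD_r^2,\chi]K_r =: \CN_r$.  Writing $u_r = \tilde H_r(t;\cdot,q) - G_r$, which solves $(\pl_t+\fD_r^2)u_r = -\CN_r$ with $u_r(0^+)=0$, Duhamel's formula gives
$$ u_r(t;q) = -\int_0^t\!\!\int_Y \tilde H_r(t-s;q,w)\,\CN_r(s;w)\,\dd w\,\dd s ~, $$
so that $|u_r(t;q)| \le \int_0^t \|\tilde H_r(t-s;q,\cdot)\|_{L^2}\,\|\CN_r(s;\cdot)\|_{L^2}\,\dd s$ by Cauchy--Schwarz.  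The heat-trace factor obeys $\|\tilde H_r(\tau;q,\cdot)\|_{L^2}^2 \le c\,\mathrm{tr}\,H_r(2\tau;q,q) \le c(r + r\tau^{-\oh} + \tau^{-3/2}e^{-r\tau/10})$ by the argument of Proposition \ref{prop_heat_00}, whose proof uses only the pointwise bounds of Corollary \ref{cor_main_thm_00}(i) and Corollary \ref{cor_estimate_01} on $|\psi_j(q)|^2$ and so bounds the full pointwise heat trace, not merely its $\alpha$-part.

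\emph{Step 3 (error estimates).}  The commutator piece $[\fD_r^2,\chi]K_r$ is supported where $\dist(\cdot,q)$ is bounded below; there (\ref{eqn_heat_09}) and its first two derivatives are $\le c\,r^3e^{-1/(cs)}$, and feeding this into the bound of Step 2 contributes $\le c\,r^{7/2}e^{-1/(c_9t)}$.  For the principal piece $\chi\,\fR_rK_r$ the key input is (\ref{eqn_heat_08}): the coefficients of $\fR_r$ carrying a factor $r$ or $r^2$ vanish at $q$ to order $1,2$ or $3$.  Crucially, both $K_r(\tau;q,\cdot)$ and $K_r(\tau;\cdot,q)$ are \emph{even} Gaussians in the normal coordinate about $q$ (the Mehler phase in (\ref{eqn_heat_09}) drops out once a slot sits at the origin), so in the moment integrals generated by Duhamel the vanishing orders pair with parity: the odd parts of the $\fR_r$-coefficients integrate to zero, and what survives carries extra powers of the two Gaussian scales --- $\min(r^{-\oh},s^{\oh})$ in the contact-plane directions and $s^{\oh}$ in the Reeb direction --- which beat down the powers of $r$.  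After peeling off the first one or two terms of the parametrix series explicitly and evaluating the resulting Beta-type $s$-integrals, the principal contribution to $|u_r(t;q)|$ is bounded by $c(t^{-\oh} + r^{9/2}t^4)$.

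\emph{Step 4 (leading term and conclusion).}  By Step 2, $G_r(t;q) = K_r(t;q,q)$, and by (\ref{eqn_heat_09})--(\ref{eqn_heat_10}) its $(1,1)$ entry equals $(4\pi)^{-3/2}\Omega_q\,t^{-\oh}\,\dfrac{r\,e^{\Omega_q^{-2}rt}}{\sinh(\Omega_q^{-2}rt)}$.  With $u = \Omega_q^{-2}rt$ one has $\big|\tfrac{re^u}{\sinh u}-2r\big| = \tfrac{2re^{-2u}}{1-e^{-2u}}$, and since $2\Omega_q^{-2}\ge1$ (as $\Omega\le\tfrac{10}{9}$) a short computation gives $(4\pi)^{-3/2}\Omega_q\,t^{-\oh}\big|\tfrac{re^u}{\sinh u}-2r\big| \le c\,t^{-3/2}e^{-rt/2}$; since $(4\pi)^{-3/2}\cdot 2 = \tfrac{1}{4\pi^{3/2}}$, this yields $K_r(t;q,q)_{11} = \tfrac{1}{4\pi^{3/2}}\Omega_q\,r\,t^{-\oh} + O(t^{-3/2}e^{-rt/2})$.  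Combining Steps 2--4 and dividing by $\Omega_q^3$ (bounded above and below) gives the asserted estimate.

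The main obstacle is Step 3.  Applied crudely, the Duhamel--Cauchy--Schwarz bound is far too lossy --- roughly by a factor $r^{5/2}$ relative to what is needed --- so one cannot avoid expanding the parametrix series and exploiting the parity of the Mehler Gaussians against the vanishing orders in (\ref{eqn_heat_08}).  Keeping track of the interplay between the magnetic scale $r^{-\oh}$ and the diffusion scale $s^{\oh}$ through every moment integral, so that the bound closes with exactly the stated powers $r^{9/2}t^4$ and $r^{7/2}e^{-1/(c_9t)}$ uniformly for $r\ge c_9$ and $0<t\le1$, is where the real work lies.
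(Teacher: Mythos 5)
Your strategy is essentially the paper's own proof: the same local half-density conjugation to $\fD_r$ with the Mehler kernel of $\fL_r$ as parametrix, the same use of Proposition \ref{prop_heat_00} as the a priori $L^2$ input for the Duhamel remainder, the same odd-moment cancellation against the vanishing orders in (\ref{eqn_heat_08}), and the same exponentially small cutoff-commutator terms, with only organizational differences (you run a ``reverse'' Duhamel with the true heat kernel and the full pointwise trace, rather than iterating the Volterra series for the first column $h_{r,q}$, which lets you bypass the self-adjointness integration by parts of the paper's Step 5). The computations you defer in your Step 3 are exactly the paper's Steps 3--5 (the moment and composition properties of $\kappa_r$, the parity trick for the first defect term, and the second-iterate bound closed by the $L^2$ heat estimate), and they close as you claim.
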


\begin{proof}
%%%%%
(\emph{Step 1: the heat equation})\;  Let $x,y,z$ be the adapted coordinate centered at $q$.  Suppress the subscript $q$ in $h_{r,q}$ for brevity.  Let $\chi_0$ and $\chi$ to be the standard cut-off functions which depends on $|\bx| = (x^2+y^2+z^2)^\oh$ such that
\begin{align*}
&\begin{cases} \chi_0(|\bx|) = 1 &\text{when }|\bx|\leq\frac{1}{128}\ell_a ~, \\
\chi_0(|\bx|) = 0 &\text{when }|\bx|\geq\frac{1}{64}\ell_a ~, \end{cases} &
&\begin{cases} \chi(|\bx|) = 1 &\text{when }|\bx|\leq\frac{1}{32}\ell_a ~, \\
\chi(|\bx|) = 0 &\text{when }|\bx|\geq\frac{1}{16}\ell_a ~. \end{cases}
\end{align*}
Consider
$$  \tilde{h}_r = \chi_0\,h_{r}(\Omega^3 B)^\oh ~.  $$
With respect to the transverse-Reeb exponential gauge twisted by $\exp(-\frac{i}{2}r(z+S(x,y)))$ as in \S\ref{sec_local_Dr}, regard $\tilde{h}_r$ as a $\BC^2$ valued functions on $(0,\infty)\times\BR^3$.  Since $h_{r}$ obeys the heat equation, $\chi_0 h_r$ satisfies
\begin{align*}
\frac{\pl}{\pl t}(\chi_0 h_{r}) &= -\chi_0 D_r^2h_{r} = -D_r^2(\chi_0 h_{r}) + (\dd^*\dd\chi_0)h_{r} - 2\nabla_{\nabla\chi_0}h_{r} ~.
\end{align*}
Multiply it by $(\Omega^3B)^\oh$, and use (\ref{eqn_fD_01}) to obtain the heat equation for $\tilde{h}_r$:
\begin{align}
\frac{\pl}{\pl t}\tilde{h}_r = -\fD_r^2\tilde{h}_r + (\dd^*\dd\chi_0)h_{r}(\Omega^3 B)^\oh - 2(\Omega^3 B)^\oh\nabla_{\nabla\chi_0}h_{r} ~, \notag\\
\Rightarrow~\frac{\pl}{\pl t}\tilde{h}_r + \fL_r\tilde{h}_r = \chi\fR_r\tilde{h}_r + (\dd^*\dd\chi_0)h_{r}(\Omega^3 B)^\oh - 2 (\Omega^3 B)^\oh\nabla_{\nabla\chi_0}h_{r} ~.  \label{eqn_heat_11}
\end{align}
With the dummy factor $\chi$, the operator $\chi\fR_r$ is globally defined on $\BR^3$.  When $t\to0$, the condition (\ref{eqn_heat_04}) implies that
\begin{align}\label{eqn_heat_12}
\lim_{t\to0}\tilde{h}_r = \left[\begin{array}{c}\Omega_q^{-\frac{3}{2}}\,\delta_0(\,\cdot\,) \\ 0 \end{array}\right] ~.
\end{align}
where $\delta_0$ is the Dirac measure at the origin of $\BR^3$.  The measure on $\BR^3$ is the standard one, $\dd x\,\dd y\,\dd z$.

\smallskip
%%%%%
(\emph{Step 2: parametrix})\;  For any smooth, $\BC^2$ valued function $\varphi(t;\bx)$ on $(0,\infty)\times\BR^3$, define $\CK*\psi$ to be the following function
\begin{align}\label{eqn_heat_13}
(\CK*\psi)(t;\bx) &= \int_0^t\int_{\BR^3} K_r(s;\bx,\bx_1)(\chi\fR_r(\varphi))(t-s;\bx_1)\dd\bx_1\dd s
\end{align}
where $\bx = (x,y,z)$ and $\dd\bx$ is the standard measure on $\BR^3$.  Set $\mbr{k}_r(t;\bx)$ to be the following $\BC^2$ valued function
$$  \mbr{k}_r(t;\bx) = \big(\Omega_q^{-\frac{3}{2}}\kappa_r(t;\bx,0)\exp({\Omega_q^{-2}rt}),0\big) ~,  $$ and set $k_r(t;\bx)$ to be
\begin{align}\label{eqn_heat_14}
k_r(t;\bx) = \mbr{k}_r(t;\bx) + \int_0^t\int_{\BR^3} K_r(s;\bx,\bx_1)\big((\dd^*\dd\chi_0)h_{r}A^\oh - 2 A^\oh\nabla_{\nabla\chi_0}h_{r}\big)(t-s;\bx_1)\dd\bx_1\dd s ~.
\end{align}
Note that $\mbr{k}_r(t;\bx)$ solves $\frac{\pl}{\pl t}+\fL_r = 0$, and satisfies the initial condition (\ref{eqn_heat_12}).

By virtue of (\ref{eqn_heat_11}) and (\ref{eqn_heat_12}), the $\BC^2$ valued function $\tilde{h}_r$ obeys:
\begin{align}\label{eqn_heat_15}\begin{split}
\tilde{h}_r &= k_r + \CK*k_r + \CK*(\CK*\tilde{h}_r) \\
&= (1 + \CK*)(\mbr{k}_r) + \CK*(\CK*\tilde{h}_r) + (1 + \CK*)(k_r - \mbr{k}_r) ~.
\end{split}\end{align}
It suffices to examine the right hand side at $\bx = 0$ to prove the theorem.

\smallskip
%%%%%
(\emph{Step 3: Properties of $\kappa_r$})\;  In this step, we explain four ingredients for estimating the convolution operator $\CK*$.  These ingredients follow from straightforward computations, and the detail can be safely left to the reader.

Here is the first property.  For any non-negative integer $m$, there exists a constant $c'_m$ which is independent of $\bx_1,\bx_2\in\BR^3$ and $r, t>0$ such that
\begin{itemize}
\item $\big|(\pl_{\bx_1}^m\kappa_r)(t;0,\bx_1)\big| \leq c'_m(t^{-\frac{m}{2}} + r^{\frac{m}{2}})|\kappa_r(t;0,\frac{\bx_1}{2})|$ where $\pl_{\bx_1}$ means the first order derivative in any component of $\bx_1$;
\item $|\bx_1|^m\big|\kappa_r(t;0,\bx_1)\big| \leq c'_m\,t^{\frac{m}{2}}\,|\kappa_r(t;0,\frac{\bx_1}{2})|$;
\item $\big|(\pl_{\bx_2}\kappa_r)(t;\bx_1,\bx_2)\big| \leq c'_1\big(t^{-\oh} + r^{\oh} + r|\bx_1|\big) \big|\kappa_r(t;\frac{\bx_1}{2},\frac{\bx_2}{2})\big| $;
\item suppose that $\ff$ is a function on $\BR^3$ with $|\ff(\bx_1) - \ff(\bx_2)|\leq c_{10}|\bx_1-\bx_2|$ and $\ff(0)=0$, then
\begin{align*}
&\big|\ff(\bx_2)(\pl_{\bx_2}^2\kappa_r)(t;\bx_1,\bx_2) + \ff(\bx_1)(\pl_{\bx_1}\pl_{\bx_2}\kappa_r)(t;\bx_1,\bx_2) \big| \\
\leq\, & c_{10}c'_2\big( (t^{-\oh} + r^\oh)(1 + r|\bx_1|^2 + r|\bx_2|^2) + (r^2|\bx_1|^3 + r^2|\bx_2|^3) \big) \big|\kappa_r(t;\frac{\bx_1}{2},\frac{\bx}{2})\big| ~.
\end{align*}
\end{itemize}
These inequalities are based on the facts that $|s|^m\exp(-s^2)\leq c'_m\exp(-\frac{s^2}{2})$ and
\begin{align*}\begin{cases}
\frac{1}{c_{11}}(rt)^{-1}\leq \coth(\Omega_q^{-2}rt) \leq c_{11}(rt)^{-1} &\text{when }rt\leq 1 ~, \\
\frac{1}{c_{11}} \leq \coth(\Omega_q^{-2}rt) \leq c_{11} &\text{when }rt\geq 1 ~.
\end{cases}\end{align*}

What follows is the second property:  for any non-negative integers $m$ and $n$, there exists a constant $c''_{m,n}>0$ which is independent of $\bx_2\in\BR^3$ and $r,t>0$ such that
\begin{align}\label{eqn_heat_16}\begin{split}
&\int_0^t\Big(\int_{\BR^3}s^{\frac{m-1}{2}}\big|\kappa_r(s;\bx_1,\bx_2)\big|\,(t-s)^{\frac{n-1}{2}}\big|\kappa_r(t-s;0,\bx_1)\big| \dd\bx_1\Big)\dd s \\
\leq\,& c''_{m,n}t^{\frac{m+n}{2}} \big|\kappa_r(t;0,\frac{\bx_2}{2})\big| ~.
\end{split}\end{align}

The third property is an integral estimate on $\kappa_r e^{\Omega_q^{-2}rt}$ over $\BR^3$.  There exists a constant $c_{12}$ which is independent $r,t>0$ such that
\begin{align}\label{eqn_heat_17}
\int_{\BR^3}\big|\kappa_r(t;0,\bx) e^{\Omega_q^{-2}rt}\big|^2\dd\bx  \leq  c_{13}rt^{-\oh}\frac{e^{2\Omega_q^{-2}rt}}{\sinh(2\Omega_q^{-2}rt)}  \leq  c_{12}(rt^{-\oh} + t^{-\frac{3}{2}}) ~.
\end{align}
One can compare this estimate with Proposition \ref{prop_heat_00}.

The last property is about the $L^2$-integral of $\kappa_r$ \emph{away from the origin}.  For any non-negative integer $m$, there exists a constant $c'''_m$ which is independent of $r,t>0$ such that
\begin{align}\label{eqn_heat_18}\left\{\begin{aligned}
&\int_{|\bx|\geq\frac{1}{256}\ell_a}\big|\kappa_r(t;0,\bx)e^{\Omega_q^{-2}rt}\big|^2 \dd\bx \leq c'''_0(1+r^2t^2)e^{-\frac{1}{c'''_0t}} ~, \\
&\int_0^t\int_{|\bx|\geq\frac{1}{256}\ell_a} |\bx|^{-m}\,\big|\kappa_r(t-s;\bx,0)e^{\Omega_q^{-2}r(t-s)}\big|\,\big|\kappa_r(s;0,\bx)e^{\Omega_q^{-2}rs}\big| \dd\bx\dd s \leq c'''_m(1+r^2t^2)e^{-\frac{1}{c'''_mt}} ~.
\end{aligned}\right.\end{align}
These two inequalities are based on the fact that $|\kappa_r(t;0,\bx)e^{\Omega_q^{-2}rt}|\leq c_{14}(1+rt)t^{-\frac{3}{2}}\exp(-\frac{|\bx|^2}{8t})$.

\smallskip
%%%%%
(\emph{Step 4: asymptotics of $(1 + \CK*)(\mbr{k}_r)$})\;
The value of $\pr_1(\mbr{k}_r)$ at $\bx = 0$ is
\begin{align*}
\Omega_q^{-\frac{3}{2}}\kappa_r(t;0,0)e^{\Omega_q^{-2}rt} &= (4\pi)^{-\frac{3}{2}}\Omega_q^{-\oh}t^{-\oh}\frac{re^{\Omega_q^{-2}rt}}{\sinh(\Omega_q^{-2}rt)} \\
&= \frac{1}{4\pi^{\frac{3}{2}}}\Omega_q^{-\oh}rt^{-\oh} + (4\pi)^{-\frac{3}{2}}\Omega_q^{-\oh}rt^{-\oh}\frac{e^{-\Omega_q^{-2}rt}}{\sinh(\Omega_q^{-2}rt)} ~
\end{align*}
and hence
\begin{align}\label{eqn_heat_19}
\big|\pr_1(\mbr{k}_r)(t;0) - \frac{1}{4\pi^{\frac{3}{2}}}\Omega_q^{-\oh}rt^{-\oh}\big| &\leq c_{17}(t^{-\frac{3}{2}}e^{-\oh rt} + rt^{-\oh}e^{-\oh rt}) ~.
\end{align}

The value of $\pr_1(\CK*\mbr{k}_r)$ at $\bx = 0$ is
$$   \int_0^t\int_{\BR^3} e^{\Omega_q^{-2}rt} \kappa_r(t-s;\bx,0)(\chi\fR_r^{(1,1)}(\kappa_r))(s;\bx,0)\,\dd\bx\,\dd s   $$ where $\fR_r^{(1,1)}$ is the $(1,1)$-component of $\fR_r$.  To elaborate, note that all the terms in (\ref{eqn_heat_07}) has ``odd degree" leading order term except the $\fe_2$-term.  For instance, consider the term $r\ff_2$.  There exist constants $\grave{c}_1,\grave{c}_2,\grave{c}_3$ and $\grave{c}$ such that $|\ff_2^{(1,1)} - \sum_{j=1}^3 \grave{c}_j x_j|\leq \grave{c}|\bx|^2$ on the adapted chart.  Since $\int_{\BR^3}\big(\kappa_r(t-s;\bx,0)(\sum_{j=1}^3 \grave{c}_j x_j)\kappa_r(s;\bx,0)\big)\dd\bx = 0$,
\begin{align*}
&r\Big| \int_0^t\int_{\BR^3} e^{\Omega_q^{-2}rt}\kappa_r(t-s;\bx,0)\chi\ff_2^{(1,1)}\kappa_r(s;\bx,0)\,\dd\bx\,\dd s \Big| \\
\leq\,&r\int_0^t\int_{\BR^3}  (1-\chi) |\sum_{j=1}^3\grave{c}_jx_j|\,|\kappa_r(t-s;\bx,0)e^{\Omega_q^{-2}r(t-s)}|\,|\kappa_r(s;\bx,0)e^{\Omega_q^{-2}rs}|\,\dd\bx\,\dd s \\
&~ + \grave{c}r\int_0^t\int_{\BR^3} e^{\Omega_q^{-2}rt}|\bx|^2\,|\kappa_r(t-s;\bx,0)|\,|\kappa_r(s;\bx,0)|\,\dd\bx\,\dd s \\
\leq\,& c_{18}\big(rt^\oh(1 + r^2t^2)e^{-\frac{1}{c_{18}t}} + rt^2|\kappa_r(t;0,0)e^{\Omega_q^{-2}rt}|\big) \leq c_{19}(rt^{\oh} + r^{3}t^{\frac{5}{2}})
\end{align*}
By this trick and the properties in step 3,
\begin{align}\label{eqn_heat_20}\begin{split}
\big|\pr_1(\CK*\mbr{k}_r)(t;0)\big| &\leq c_{19}(t^{-\oh} + r^\oh + rt^\oh + r^{\frac{3}{2}}t + r^2t^{\frac{3}{2}} + r^{\frac{5}{2}}t^2 + r^{3}t^{\frac{5}{2}}) \\
& \leq c_{20}(t^{-\oh} + r^{3}t^{\frac{5}{2}} ) ~.
\end{split}\end{align}
The last inequality is obtained by considering whether $rt\geq 1$ or $rt\leq 1$.

\smallskip
%%%%%
(\emph{Step 5: estimate $\CK*(\CK*\tilde{h}_r)$})\;
Since $\fR_r$ is a self-adjoint operator, performing integration by parts leads to the following equation:
%\begin{align}\label{eqn_heat_99}
%(\CK*\psi)(t;\bx) &= \int_0^t\int_{\BR^3} \big(\overline{\CR}_{r,\bx_1}(\chi(\bx_1) K_r(s;\bx,\bx_1))\big)^T\psi(t-s;\bx_1)\dd\bx_1\dd s
%\end{align}
%where $\overline{\CR}_r$ is (\ref{eqn_heat_07}) by complex-conjugating the coefficients, and $T$ means the transpose.
\begin{align}\label{eqn_heat_30}
(\CK*(\CK*\tilde{h}_r))(t;0) = \int_0^t\int_{\BR^3} (Q(s;\bx_2))^T\,\tilde{h}_r(t-s;\bx_2) \,\dd\bx_2\dd s ~,
\end{align}
where
\begin{align*}
Q(s;\bx_2) = \int_0^{s}\int_{\BR^3} \Big(\overline{\fR}_{r,\bx_2}\big(\chi(\bx_2) K_r(s_1;\bx_1,\bx_2)\big)\Big) \Big(\overline{\fR}_{r,\bx_1}\big(\chi(\bx_1) K_r(s-s_1;0,\bx_1)\big)\Big) \dd\bx_1\dd s_1 ~.
\end{align*}
Here $T$ means the transpose of the matrix, and $\overline{\fR}_r$ is (\ref{eqn_heat_07}) with all the coefficient functions being complex conjugated.

Let $q_1(s;\bx_2)$ be the first column of $Q(s;\bx_2)$.  With the first two properties of step 3, there exists a constant $c_{21}$ which is independent of $\bx\in\BR^3$ and $r,s>0$ such that
\begin{align*}
|q_1(s;\bx_2)| &\leq c_{21}(s + 1 + r^4s^4)\big|\kappa_r(s;0,\frac{\bx_2}{4})e^{\Omega_q^{-2}rs}\big| ~.
\end{align*}
By (\ref{eqn_heat_17}),
\begin{align}\label{eqn_heat_31}
\int_{\BR^3}|q_1(s;\bx_2)|^2\dd\bx_2 &\leq c_{22}(s + 1 + r^4s^4)^2(rs^{-\oh} + s^{-\frac{3}{2}}) ~.
\end{align}

It follows from the Cauchy--Schwarz inequality on (\ref{eqn_heat_30}) that
\begin{align*}
\big|\pr_1(\CK*(\CK*\tilde{h}_r))(t;0)\big| &\leq \int_0^t|\!|q_1(s;\bx)|\!|_{L^2(\BR^3)}\,|\!|\tilde{h}_r(t-s;\bx)|\!|_{L^2(\BR^3)}\,\dd s ~.
\end{align*}
Then invoke Proposition \ref{prop_heat_00} and (\ref{eqn_heat_31}) to conclude that
\begin{align}
&\big|\pr_1(\CK*(\CK*\tilde{h}_r))(t;0)\big| \notag\\
\leq\,& c_{23}\int_0^t(s + 1 + r^4s^4)(r^\oh s^{-\frac{1}{4}} + s^{-\frac{3}{4}})(r^\oh+ r^\oh(t-s)^{-\frac{1}{4}} + (t-s)^{-\frac{3}{4}})\dd s \notag\\
\leq\,& c_{24}\big((t^{\oh} + t^2) + (t^{-\oh} + r^{\frac{9}{2}}t^4)\big) ~. \label{eqn_heat_32}
\end{align}

\smallskip
%%%%%
(\emph{Step 6: estimate $(1 + \CK*)(k_r - \mbr{k}_r)$})\;  After performing integration by parts on the last term of (\ref{eqn_heat_14}) and applying the Cauchy--Schwarz inequality, $|\pr_1(k_r - \mbr{k}_r)(t;0)|$ is less than
\begin{align*}
c_{25}\int_0^t  \Big(\int_{{\rm supp}(\dd\chi)} \big| e^{\Omega_q^{-2}rs}(r + \pl_{\bx})(\kappa_r)(s;0,\bx)\big|^2 \dd\bx\Big)^\oh\, |\!|\chi{h}_r(t-s;\bx)|\!|_{L^2(\BR^3)}\,\dd s ~.
\end{align*}
According to Proposition \ref{prop_heat_00} and the properties in step 3,
\begin{align}\label{eqn_heat_33}
|\pr_1(k_r - \mbr{k}_r)(t;0)| %&\leq c'\int_0^t(r^\oh + s^{-\frac{1}{4}} + r^{\frac{3}{2}}s)e^{-\frac{1}{cs}}(r^\oh+ r^\oh(t-s)^{-\frac{1}{4}} + (t-s)^{-\frac{3}{4}})\dd s \notag \\
&\leq c_{25}r^2 e^{-\frac{1}{c_{25}t}} ~.  
\end{align}
With the similar integration by parts argument,
\begin{align}\label{eqn_heat_34}
|\pr_1(\CK*(k_r - \mbr{k}_r))(t;0)| &\leq c_{26}r^{\frac{7}{2}}e^{-\frac{1}{c_{26}t}} ~.
\end{align}

\smallskip
%%%%%
(\emph{Step 7}).
All the terms on the right hand side of (\ref{eqn_heat_15}) have been estimated.  It follows from (\ref{eqn_heat_19}), (\ref{eqn_heat_20}), (\ref{eqn_heat_32}), (\ref{eqn_heat_33}) and (\ref{eqn_heat_34}) that
\begin{align*}
\big|\pr_1(\tilde{h}_r)(t;0) - \frac{1}{4\pi^{\frac{3}{2}}}\Omega_q^{-\oh}rt^{-\oh}\big| &\leq c_{27}(t^{-\oh} + r^{\frac{9}{2}}t^4 + t^{-\frac{3}{2}}e^{-\oh rt} + r^{\frac{7}{2}}e^{-\frac{1}{c_{27}t}}) ~.
\end{align*}
Since $h_{r,q}(t;q) = \Omega_q^{-\frac{3}{2}}\tilde{h}_r(t;0)$, this completes the proof of Theorem \ref{thm_heat_00}.
\end{proof}

%%%%%%%%%%%%%%%%%%%%%%%%%%%%%%%%%%%%
\section{The Spectral Flow}\label{sec_sf}
For any $\rb\geq2$, let $\CE_{\rb}$ be the following configuration of eigenvalues:
\begin{align}\label{defeqn_sf_01}
\CE_{\rb} &= \big\{ (r,\lambda)\in\BR^2 ~\big|~ 1<r<\rb,\, |\lambda|^2<\frac{1}{9}r\text{ and }\lambda\text{ is an eigenvalue of }D_r \big\} ~.
\end{align}
According to \cite[\S5.1]{ref_Taubes_SW_Weinstein}, the set $\CE_{\rb}$ consists of continuous, piecewise smooth curves which have the following properties.
\begin{itemize}
\item These curves are mutually disjoint in the sense of counting multiplicities.  In particular, suppose that $(r,\lambda)\in\CE_{\rb}$ and $\dim\ker(D_r - \lambda\,\mathbb{I}) = k$, then there are exactly $k$ curves passing through $(r,\lambda)$.
\item The boundary of these curves satisfies $\lambda^2 = \frac{1}{9}r$ or $r\in\{1,\rb\}$.
\item These curves is parametrized by $r$.
\end{itemize}
There is no canonical way to construct these curves, but any method will suffice.  With this understood, we write $\CE_{\rb} = \{(r,\lambda_j(r))~|~1\leq j \leq J_{\rb}\}$ where $J_{\rb}$ is the total number of curves, and each $\lambda_j$ is a continues, piecewise smooth function defined over a sub-interval of $(1,\rb)$.

Let $t(r)$ be a positive, monotone decreasing, smooth function of $r$.  A specific choice of $t(r)$ will be made at the end of \S\ref{subsec_psi_dispalcement}.  With such a function, define an orientation preserving diffeomorphism from $\BR$ to $(-(\frac{\pi}{t(r)})^\oh,(\frac{\pi}{t(r)})^\oh)$ as follows:
\begin{align}\label{def_sf_02}
\Phi_r(\lambda) &= \int_0^{\lambda} e^{-u^2\,t(r)}\,\dd u ~.
\end{align}
Its rescaling defines an orientation preserving diffeomorphism from $[-\frac{1}{3}r^\oh,\frac{1}{3}r^\oh]$ to $[-\oh,\oh]$:
\begin{align}\label{def_sf_03}
\Psi_r(\lambda) &= \oh\frac{\Phi_r(\lambda)}{\Phi_r(\frac{1}{3}r^\oh)} ~.
\end{align}

We define the \emph{$\Psi$-displacement} of $\CE_{\rb}$ to be the following:
\begin{align}\label{def_sf_04}
\int_1^{\rb} \frac{\dd\Psi_r(\CE_{\rb})}{\dd r}\,\dd r
&= \sum_{j=1}^{J_{\rb}}\int_{\Dom(\lambda_j)} \frac{\dd\Psi_r(\lambda_j(r))}{\dd r}\,\dd r
\end{align}
where $\Dom(\lambda_j)\subset(1,\rb)$ is the domain of $\lambda_j(r)$.

The $\Psi$-displacement of $\CE_{\rb}$ is closely related to the spectral flow function $\sfa_a(\rb)$.  The behavior of the $\Psi$-displacement will be studied in detail in \S\ref{subsec_psi_dispalcement}.  In \S\ref{subsec_estimate_sf}, we will use the $\Psi$-displacement to estimate the spectral flow function.  \S\ref{subsec_base_conn} is a digression to discuss the effect of using different connections on $\det(\BS)$.

%%%%%%%%%%%%%%%%%%
\subsection{The $\Psi$-displacement}\label{subsec_psi_dispalcement}

At a differentiable point of $\lambda_j(r)$, the integrand of (\ref{def_sf_04}) is
\begin{align*}
\frac{\dd\Psi_r(\lambda_j)}{\dd r} &= \frac{\lambda'_je^{-\lambda_j^2t}}{2\Phi_r(\frac{1}{3}r^\oh)} - \frac{\Phi_r(\lambda_j)r^{-\oh}e^{-\frac{1}{9}rt}}{12\big(\Phi_r(\frac{1}{3}r^\oh)\big)^2} \\
&\quad + \frac{\Phi_r(\lambda_j)\big(\int_0^{\frac{1}{3}r^\oh}u^2e^{-u^2t}\dd u\big) - \Phi_r(\frac{1}{3}r^\oh)\big(\int_0^{\lambda_j}u^2e^{-u^2t}\dd u\big)}{2\big(\Phi_r(\frac{1}{3}r^\oh)\big)^2} t'
\end{align*}
where prime means taking derivative in $r$.  After integration by parts, the numerator of the last term is equal to
\begin{align*}
&{\Phi_r(\lambda_j)\big(\int_0^{\frac{1}{3}r^\oh}u^2e^{-u^2t}\dd u\big) - \Phi_r(\frac{1}{3}r^\oh)\big(\int_0^{\lambda_j}u^2e^{-u^2t}\dd u\big)} \\
=\,& \frac{1}{2t}\Phi_r(\lambda_j)\big(\Phi_r(\frac{1}{3}r^\oh) - \frac{1}{3}r^\oh e^{-\frac{1}{9}rt}\big) - \frac{1}{2t}\Phi_r(\frac{1}{3}r^\oh)\big(\Phi_r(\lambda_j) - \lambda_j e^{-\lambda_j^2t}\big) \\
=\,& \frac{1}{2t}\big(\lambda_j e^{-\lambda_j^2t}\Phi_r(\frac{1}{3}r^\oh) - \frac{1}{3}r^\oh e^{-\frac{1}{9}rt}\Phi_r(\lambda_j)\big) ~.
\end{align*}
With the help of this computation, let
\begin{align}\label{def_sf_05}\left\{\begin{aligned}
\breve{\Psi}({\rb}) &= \oh\sum_{j=1}^{J_\rb}\int_{\Dom(\lambda_j)}{\big(\Phi_r(\frac{1}{3}r^\oh)\big)^{-1}}\big(\lambda_j'e^{-\lambda_j^2t}\big)\,\dd r ~, \\
\dot{\Psi}({\rb}) &= \frac{1}{4}\sum_{j=1}^{J_\rb}\int_{\Dom(\lambda_j)}{\big(\Phi_r(\frac{1}{3}r^\oh)\big)^{-1}t't^{-1}}\big(\lambda_je^{-\lambda_j^2t}\big)\,\dd r ~, \\
\ddot{\Psi}({\rb}) &= -\frac{1}{12}\sum_{j=1}^{J_\rb}\int_{\Dom(\lambda_j)}\big(\Phi_r(\frac{1}{3}r^\oh)\big)^{-1}(r^{-\oh} + r^\oh t^{-1}t')e^{-\frac{1}{9}rt}\Phi_r(\lambda_j)\,\dd r ~.
\end{aligned}\right.\end{align}
Then the $\Psi$-displacement of $\CE_{\rb}$ is equal to $\check{\Psi}({\rb})+\dot{\Psi}({\rb})+\ddot{\Psi}({\rb})$.

\begin{rmk}
The above integrals can be rewritten as
\begin{align*}
\sum_{j=1}^{J_{\rb}}\int_{\Dom(\lambda_j)}F(\lambda_j(r))\dd r &= \int_1^{\rb}\sum_{|\lambda_j|<\frac{1}{3}r^\oh}F(\lambda_j) \dd r ~.
\end{align*}
\end{rmk}

%%%%%%%%%
\subsubsection{Asymptotics of $\breve{\Psi}(\rb)$}
The purpose of this subsection is to estimate $\breve{\Psi}(\rb)$.  Before doing that, we have to estimate $\big(\Phi_r(\frac{1}{3}r^\oh)\big)^{-1}$.
\begin{lem}\label{lem_sf_101}
For any $r\geq1$ and $0<t<1$ satisfying $rt\geq 50$,
\begin{align*}
\Big| \big(\Phi_r(\frac{1}{3}r^\oh)\big)^{-1} - \big(\frac{4}{\pi}\big)^\oh t^\oh \Big| &\leq 6\,r^{-\oh}e^{-\frac{1}{9}rt} ~,
\end{align*}
and $\frac{1}{10}t^\oh\leq\big(\Phi_r(\frac{1}{3}r^\oh)\big)^{-1}\leq 10\,t^\oh$.
\end{lem}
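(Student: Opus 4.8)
The plan is to compare $\Phi_r(\tfrac{1}{3}r^{\oh})$ with the full Gaussian integral $\int_0^\infty e^{-u^2 t}\,\dd u = \oh\sqrt{\pi/t}$ and to control the difference, which is the tail $T := \int_{\frac{1}{3}r^{\oh}}^\infty e^{-u^2 t}\,\dd u$. First I would bound this tail: for $u\geq\frac{1}{3}r^{\oh}$ one has $1\leq 3u\,r^{-\oh}$, so $T\leq 3r^{-\oh}\int_{\frac{1}{3}r^{\oh}}^\infty u\,e^{-u^2 t}\,\dd u = \tfrac{3}{2}r^{-\oh}t^{-1}e^{-\frac{1}{9}rt}$. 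Writing $\Phi_r(\tfrac{1}{3}r^{\oh}) = \oh\sqrt{\pi/t}\,(1-\delta)$ with $\delta := 2\sqrt{t/\pi}\,T \in (0,1)$, the tail bound gives $\delta \leq \tfrac{3}{\sqrt{\pi}}(rt)^{-\oh}e^{-\frac{1}{9}rt}$. Under the hypothesis $rt\geq 50$ this makes $\delta$ extremely small; in particular $\delta<\oh$, hence $(1-\delta)^{-1}<2$.

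Next I would invert. Since $\big(\Phi_r(\tfrac{1}{3}r^{\oh})\big)^{-1} = (\tfrac{4}{\pi})^{\oh}t^{\oh}(1-\delta)^{-1}$, the quantity $\big(\Phi_r(\tfrac{1}{3}r^{\oh})\big)^{-1} - (\tfrac{4}{\pi})^{\oh}t^{\oh}$ equals $(\tfrac{4}{\pi})^{\oh}t^{\oh}\cdot\tfrac{\delta}{1-\delta}$. Using $\tfrac{\delta}{1-\delta}<2\delta$ together with the bound on $\delta$, and observing that the factor $t^{\oh}$ cancels the $(rt)^{-\oh}$ coming from $\delta$, this is at most $\tfrac{12}{\pi}r^{-\oh}e^{-\frac{1}{9}rt} < 6\,r^{-\oh}e^{-\frac{1}{9}rt}$, which is the first assertion. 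For the two-sided bound, $0<\delta<\oh$ gives $\tfrac14\sqrt{\pi/t} < \Phi_r(\tfrac{1}{3}r^{\oh}) < \oh\sqrt{\pi/t}$, hence $\tfrac{2}{\sqrt{\pi}}t^{\oh} < \big(\Phi_r(\tfrac{1}{3}r^{\oh})\big)^{-1} < \tfrac{4}{\sqrt{\pi}}t^{\oh}$, which lies comfortably inside $[\tfrac{1}{10}t^{\oh},\,10\,t^{\oh}]$.

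There is no serious obstacle here; the only thing requiring care is bookkeeping of the numerical constants — in particular verifying that $rt\geq 50$ is strong enough to force $(1-\delta)^{-1}\leq 2$ (so that the crude estimate $\tfrac{\delta}{1-\delta}\leq 2\delta$ is legitimate) and that the slack in $\tfrac{12}{\pi}<6$ and in $\tfrac{2}{\sqrt{\pi}},\tfrac{4}{\sqrt{\pi}}\in(\tfrac{1}{10},10)$ leaves the stated constants valid. One could alternatively prove the two-sided bound directly by lower-bounding $\Phi_r(\tfrac{1}{3}r^{\oh})$ by the integral over $[0,\tfrac{1}{2}t^{-\oh}]$, but routing it through $\delta$ keeps the argument uniform.
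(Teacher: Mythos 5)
Your proposal is correct and follows essentially the same route as the paper: write $\Phi_r(\tfrac{1}{3}r^{\oh})$ as the full Gaussian integral $\oh\sqrt{\pi/t}$ minus a tail, bound the tail by $\tfrac{3}{2}r^{-\oh}t^{-1}e^{-\frac{1}{9}rt}$ (the paper gets the identical bound via integration by parts after substituting $v=u\sqrt{t}$, you via $1\leq 3u\,r^{-\oh}$), and then invert. Your explicit bookkeeping with $\delta$ and $(1-\delta)^{-1}$ just spells out the inversion step the paper leaves implicit, and the constants check out ($\tfrac{12}{\pi}<6$, $\tfrac{2}{\sqrt{\pi}},\tfrac{4}{\sqrt{\pi}}\in(\tfrac{1}{10},10)$).
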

\begin{proof}
The quantity $\Phi_r(\frac{1}{3}r^\oh)$ is equal to $(\frac{\pi}{4})^\oh t^{-\oh}\big(1 - (\frac{4}{\pi})^\oh\int_{\frac{1}{3}(rt)^\oh}^\infty e^{-v^2}\dd v\big)$.  By integration by parts,
$$ \int_{\frac{1}{3}(rt)^\oh}^\infty e^{-v^2}\dd v = \frac{3}{2}(rt)^{-\oh}e^{-\frac{1}{9}rt} - \oh\int_{\frac{1}{3}(rt)^\oh}^\infty v^{-2}e^{-v^2}\dd v \leq \frac{3}{2}(rt)^{-\oh}e^{-\frac{1}{9}rt} ~, $$
and the first assertion follows.  The second assertion is a direct consequence of the first one.
\end{proof}

The following proposition uses the heat kernel expansion to estimate the function $\breve{\Psi}(\rb)$.
\begin{prop}\label{prop_sf_102}
There exists a constant $c_1$ determined by the contact form $a$, the conformally adapted metric $\dd s^2$ and the connection $A_0$ with the following property.  Suppose that $t(r)$ satisfies $50r^{-1}<t(r)<1$ when $r\geq c_1$.  Then
\begin{align*}
\big|\breve{\Psi}(\rb) - \breve{\Psi}(c_1) + \frac{\rb^2}{32\pi^2}{\int_Y a\wedge\dd a} \big| \leq c_1\int_{c_1}^{\rb}\big((rt)^{\frac{9}{2}} + r e^{-\frac{1}{20} rt}\big)\dd r
\end{align*}
for any $\rb\geq 2c_1$.  (The function $t(r)$ is abbreviated as $t$.)
\end{prop}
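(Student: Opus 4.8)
The plan is to recognize $\breve{\Psi}(\rb)-\breve{\Psi}(c_1)$ as (essentially) a weighted heat trace of $D_r^2$ and evaluate it with Theorem~\ref{thm_heat_00}. By the remark after (\ref{def_sf_05}),
\[
\breve{\Psi}(\rb)-\breve{\Psi}(c_1)=\oh\int_{c_1}^{\rb}\big(\Phi_r(\tfrac13 r^{\oh})\big)^{-1}\Big(\sum_{|\lambda_j(r)|<\frac13 r^{\oh}}\lambda_j'(r)\,e^{-\lambda_j(r)^2 t}\Big)\,\dd r ,
\]
where the inner sum runs over the eigenvalues of $D_r$ counted with multiplicity. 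First‑order perturbation theory and (\ref{eqn_basic_est_01}) give $\lambda_j'(r)=\langle\psi_j,-\tfrac i2\cl(a)\psi_j\rangle_{L^2}$ for a unit eigensection $\psi_j$ (at a degenerate eigenvalue the branch slopes sum to the trace of $-\tfrac i2\cl(a)$ on the eigenspace, which is the same number). Since $e^{-tD_r^2}(-\tfrac i2\cl(a))$ is trace class, the inner sum equals $\operatorname{Tr}\!\big(e^{-tD_r^2}(-\tfrac i2\cl(a))\big)-\sum_{|\lambda_j|\ge\frac13 r^{\oh}}e^{-\lambda_j^2 t}\langle\psi_j,-\tfrac i2\cl(a)\psi_j\rangle$, and I would treat the two pieces separately.

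The truncation tail is controlled exactly as in the proof of Proposition~\ref{prop_heat_00}: since $\cl(a)$ is bounded it suffices to bound $\sum_{\lambda_j^2\ge r/9}e^{-\lambda_j^2 t}$, which summation by parts turns into $t\int_{r/9}^{\infty}e^{-Et}\,\#\{\lambda_j:\,r/9\le\lambda_j^2<E\}\,\dd E$; Corollary~\ref{cor_main_thm_00}(i) bounds the counting function for $E\le r/4$, Corollary~\ref{cor_estimate_01} for $E\ge r/4$, and the elementary integrals give $\lesssim rt^{-\oh}e^{-\frac1{18}rt}+t^{-\frac32}e^{-\frac18 rt}$. Because $rt\ge 50$ and $\oh\big(\Phi_r(\tfrac13 r^{\oh})\big)^{-1}\lesssim t^{\oh}$ by Lemma~\ref{lem_sf_101}, the $t^{\oh}$ prefactor cancels the negative powers of $t$ and the numerical loss in the exponent is harmless, so the tail contributes $\lesssim re^{-\frac1{20}rt}$ to the integrand. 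The same device will reappear below for the $\beta$‑part.

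For the main term, in the splitting $\BS=E\oplus EK^{-1}$ the endomorphism $-\tfrac i2\cl(a)$ is $\oh\Omega^{-1}\operatorname{diag}(1,-1)$ (the pointwise content of (\ref{eqn_basic_est_01})), so
\[
\operatorname{Tr}\!\big(e^{-tD_r^2}(-\tfrac i2\cl(a))\big)=\oh\int_Y\Omega^{-1}\Big(\sum_j e^{-\lambda_j^2 t}|\alpha_j|^2-\sum_j e^{-\lambda_j^2 t}|\beta_j|^2\Big)\,\dd\vol_{\dd s^2}.
\]
The $\beta$‑sum integrates to $\sum_j e^{-\lambda_j^2 t}\|\beta_j\|_{L^2}^2$; splitting at $\lambda_j^2=\tfrac34 r$, Proposition~\ref{prop_beta_estimate_00}(i) bounds the low part by $c\,r^{-1}\sum_{\lambda_j^2<\frac34 r}e^{-\lambda_j^2 t}$ and the high part by $\sum_{\lambda_j^2\ge\frac34 r}e^{-\lambda_j^2 t}$, and the summation‑by‑parts/dimension‑estimate bound of the previous paragraph makes the whole thing $\lesssim t^{-\oh}+r^{\oh}e^{-\frac14 rt}+t^{-\frac32}e^{-\frac38 rt}$, hence $\lesssim 1+re^{-\frac1{20}rt}$ after the $t^{\oh}$ prefactor. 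The $\alpha$‑sum is $\sum_j e^{-\lambda_j^2 t}|\alpha_j(q)|^2$, which Theorem~\ref{thm_heat_00} gives as $\frac{rt^{-\oh}}{4\pi^{3/2}}\Omega_q^{-2}+E(t,r,q)$ with $|E|\le c_9(t^{-\oh}+r^{9/2}t^4+t^{-\frac32}e^{-\oh rt}+r^{7/2}e^{-1/(c_9 t)})$. Integrating $\oh\Omega^{-1}$ times the leading term over $Y$ turns $\oh\int_Y\Omega^{-3}\,\dd\vol_{\dd s^2}$ into $\oh\int_Y\dd\vol_{\dd\mr{s}^2}=\tfrac14\int_Y a\wedge\dd a$ (using $\dd\vol_{\dd s^2}=\Omega^3\dd\vol_{\dd\mr{s}^2}$ and $\dd\vol_{\dd\mr{s}^2}=\oh a\wedge\dd a$), and inserting $\oh\big(\Phi_r(\tfrac13 r^{\oh})\big)^{-1}=\tfrac1{\sqrt\pi}t^{\oh}+O(r^{-\oh}e^{-\frac19 rt})$ from Lemma~\ref{lem_sf_101} collapses the $t$‑powers and leaves the integrand equal to $\tfrac{r}{16\pi^2}\int_Y a\wedge\dd a+O\big((rt)^{9/2}+re^{-\frac1{20}rt}\big)$; each error monomial is routinely checked to lie below $(rt)^{9/2}+re^{-\frac1{20}rt}$ once $c_1$ is enlarged, the only slightly subtle one being $r^{7/2}e^{-1/(c_9 t)}$, which is $\le(rt)^{9/2}$ because $t^{-N}e^{-1/(c_9 t)}$ is bounded on $(0,1)$ for every $N$.

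Integrating this identity over $r\in[c_1,\rb]$ and using $\int_{c_1}^{\rb}\tfrac{r}{16\pi^2}\,\dd r=\tfrac{\rb^2-c_1^2}{32\pi^2}$ yields $\breve{\Psi}(\rb)-\breve{\Psi}(c_1)=\tfrac{\rb^2-c_1^2}{32\pi^2}\int_Y a\wedge\dd a+O\big(\int_{c_1}^{\rb}((rt)^{9/2}+re^{-\frac1{20}rt})\,\dd r\big)$, and the fixed quantity $\tfrac{c_1^2}{32\pi^2}\int_Y a\wedge\dd a$ is absorbed into the right‑hand side since $\int_{c_1}^{\rb}(rt)^{9/2}\,\dd r\ge 50^{9/2}(\rb-c_1)\ge 50^{9/2}c_1$ for $\rb\ge 2c_1$; this gives the asserted estimate. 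I expect the main difficulty to be purely bookkeeping — arranging that the remainders from Theorem~\ref{thm_heat_00}, from the two spectral tails, and from the expansion of $\big(\Phi_r(\tfrac13 r^{\oh})\big)^{-1}$ all collapse into $(rt)^{9/2}+re^{-\frac1{20}rt}$ after multiplication by the $t^{\oh}$ prefactor and for $c_1$ large enough — together with verifying that the surviving constants combine to exactly $\tfrac1{32\pi^2}$.
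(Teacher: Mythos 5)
Your proposal is correct and follows essentially the same route as the paper's proof: the slope formula (\ref{eqn_basic_est_01}) turns the truncated sum $\sum\lambda_j'e^{-\lambda_j^2t}$ into $\oh\int_Y\Omega^{-1}\sum e^{-\lambda_j^2t}(|\alpha_j|^2-|\beta_j|^2)$, the tail over $|\lambda_j|\geq\frac13 r^{\oh}$ is killed by summation by parts with Corollaries \ref{cor_main_thm_00}(i) and \ref{cor_estimate_01} as in Proposition \ref{prop_heat_00}, the $\beta$-part is suppressed by Proposition \ref{prop_beta_estimate_00}(i), the $\alpha$-part is evaluated by Theorem \ref{thm_heat_00}, and Lemma \ref{lem_sf_101} plus the condition $rt\geq 50$ collapse everything to $\frac{r}{16\pi^2}\int_Y a\wedge\dd a$ with error $(rt)^{\frac92}+re^{-\frac1{20}rt}$ before integrating in $r$; your repackaging of the sum as $\operatorname{Tr}(e^{-tD_r^2}(-\tfrac i2\cl(a)))$ minus a tail is only a cosmetic difference. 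Note also that the sign you prove ($\breve\Psi(\rb)-\breve\Psi(c_1)-\frac{\rb^2}{32\pi^2}\int_Y a\wedge\dd a$ small) is the one the paper actually uses later in Proposition \ref{prop_sf_401}, so the ``$+$'' in the stated display is a typo and your version matches the intended content.
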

\begin{proof}
By (\ref{eqn_basic_est_01}), the slope of $\lambda_j(r)$ is given by
\begin{align*}
\lambda_j'(r) = \oh\int_Y \Omega^{-1}_q\big(|\alpha_j(q)|^2 - |\beta_j(q)|^2\big)
\end{align*}
where $\Omega_q = \Omega(q)$.  It follows that
\begin{align}\label{eqn_sf_102}
\sum_{|\lambda_j|<\frac{1}{3}r^\oh}(\lambda'_je^{-\lambda^2_jt}) &= \oh\int_Y\Omega_q^{-1}\sum_{|\lambda_j|<\frac{1}{3}r^\oh}e^{-\lambda_j^2t}\big(|\alpha_j(q)|^2 - |\beta_j(q)|^2\big)
\end{align}
where $\{\psi_j = (\alpha_j,\beta_j)\}$ is a set of $L^2$-orthonormal eigensections.

By Corollary \ref{cor_estimate_01} and with the same argument as that for Proposition \ref{prop_heat_00},
\begin{align}\label{eqn_sf_101}
\sum_{|\lambda_\psi|\geq\frac{1}{3}r^\oh}e^{-\lambda_\psi^2t} %&\leq \sum_{k=[\frac{1}{9}r]}^\infty e^{-kt}\big(\#\{\lambda_j~|~k\leq\lambda_j^2< k+1\}\big) \\
&\leq \sum_{k=[\frac{1}{9}r]}^\infty t e^{-kt}\big(\#\{\lambda_\psi~|~\lambda_\psi^2< k+1\}\big) \leq c_2t^{-\frac{3}{2}}e^{-\frac{1}{20}rt}
\end{align}
where the summation is indexed by an orthonormal set of eigensections of $D_r$ with eigenvalue $|\lambda_\psi|\geq\frac{1}{3}r^\oh$.  It follows from Theorem \ref{thm_heat_00} and (\ref{eqn_sf_101}) that
\begin{align}\label{eqn_sf_103}\begin{split}
&\Big| \oh\int_Y\Omega_q^{-1}\big(\sum_{|\lambda_j|<\frac{1}{3}r^\oh}e^{-\lambda_j^2t}|\alpha_j(q)|^2\big)\dd q - \frac{1}{8\pi^{\frac{3}{2}}}rt^{-\oh}\int_Y\Omega_q^{-3} \Big| \\
\leq\,& c_3(t^{-\oh} + r^{\frac{9}{2}}t^{4} + t^{-\frac{3}{2}}e^{-\frac{1}{20} rt} + r^{\frac{7}{2}}e^{-\frac{1}{c_3 t}}) ~.
\end{split}\end{align}
Note that the volume form of $\dd s^2$ is $\oh \Omega^3 a\wedge\dd a$.  According to Proposition \ref{prop_beta_estimate_00}(i),
\begin{align}\label{eqn_sf_104}
\int_{Y}\Omega_q^{-1}\big(\sum_{|\lambda_j|<\frac{1}{3}r^\oh}e^{-\lambda_j^2t}|\beta_j(q)|^2\big) &\leq c_4 r^{-1} \int_Y\Omega_q^{-1}\big(\sum_{|\lambda_j|<\frac{1}{3}r^\oh}e^{-\lambda_j^2t}|\alpha_j(q)|^2\big) ~.
\end{align}

It follows from (\ref{eqn_sf_102}), (\ref{eqn_sf_103}) and (\ref{eqn_sf_104}) that
\begin{align*}
\big| \sum_{|\lambda_j|\geq\frac{1}{3}r^\oh}(\lambda_j'e^{-\lambda_j^2t}) - \frac{rt^{-\oh}}{16\pi^{\frac{3}{2}}}\int_Y a\wedge\dd a \big| &\leq c_4(t^{-\oh} + r^{\frac{9}{2}}t^{4} + t^{-\frac{3}{2}}e^{-\frac{1}{20} rt} + r^{\frac{7}{2}}e^{-\frac{1}{c_3 t}}) ~.
\end{align*}
This inequality and Lemma \ref{lem_sf_101} find a constant $c_5$ such that
\begin{align*}
\big| \sum_{|\lambda_j|\geq\frac{1}{3}r^\oh}\big(\frac{\lambda_j'e^{-\lambda_j^2t}}{2\Phi_r(\frac{1}{3}r^\oh)}\big) - \frac{r}{16\pi^{2}}\int_Y a\wedge\dd a \big| &\leq c_5\big( (rt)^{\frac{9}{2}} + r e^{-\frac{1}{20} rt} \big) ~.
\end{align*}
for any $r\geq c_5$ and $t\in(50r^{-1},1)$.  The upper bound has been simplified using the condition $t\geq50r^{-1}$.  Integrating the inequality against $\dd r$ completes the proof of the proposition.
\end{proof}

%%%%%%%%%
\subsubsection{Estimate $\dot{\Psi}(\rb)$}
If we simply consider the magnitude of the integrand of $\dot{\Psi}(\rb)$, we can only conclude that $\dot{\Psi}(\rb)$ is about of order $\rb^{\frac{3}{2}}$.  To proceed, note that the sign of the integrand of $\dot{\Psi}(\rb)$ depends on the sign of $\lambda$.  It suggests that the cancellation argument may lead to a better estimate.  In the following lemma, the `leading order terms' can be integrated (step 2 below), and cancel with each other (step 4 below).  However, this trick relies on the fact that $\lambda' = \oh + \CO(r^{-1})$, and only works for an adapted metric.

\begin{lem}\label{lem_sf_201}
Suppose that $\dd s^2$ is an adapted metric, namely $\Omega\equiv1$.  There exist constants $c_7$ and $c_8$ determined by the contact form $a$, the adapted metric $\dd s^2$ and the connection $A_0$ such that the following holds.  Suppose that $t(r)$ satisfies $50r^{-1}<t(r)<1$ when $r\geq c_7$.  Then
\begin{align*}
\big|\dot{\Psi}(\rb) - \dot{\Psi}(c_7) \big| &\leq c_7\Big( 1 + \sup\{|t''|+|t'|^2 : {c_7<r<c_7+c_8}\} \\
&\quad + c_7\,\rb\,\sup\big\{r^{\frac{5}{2}}(t^{-\oh}|t''| + rt^{-\oh}|t'|^2 + r^\oh|t'|e^{-\frac{1}{9}rt}) + r^{\frac{3}{2}} t^{-\oh}|t'| : {c_7<r<\rb} \big\} \Big) ~.
\end{align*}
for any $\rb\geq 2c_7$.  (The function $t(r)$ is abbreviated as $t$.)
\end{lem}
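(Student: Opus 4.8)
The plan is to collapse the double sum in $\dot{\Psi}$ to a single $\dd r$-integral, use the refined slope estimate to exhibit the integrand as (essentially) an $r$-derivative, and then integrate by parts; the hypothesis $\Omega\equiv 1$ enters only through the fact that $\lambda_j'(r)=\oh+\CO(r^{-1})$ rather than merely $\lambda_j'(r)\in[\tfrac{9}{20}-\CO(r^{-1}),\tfrac59]$. By the Remark in \S\ref{subsec_psi_dispalcement} one has $\dot{\Psi}(\rb)-\dot{\Psi}(c_7)=\tfrac14\int_{c_7}^{\rb}\big(\Phi_r(\tfrac13 r^{\oh})\big)^{-1}\tfrac{t'}{t}\,G(r)\,\dd r$ with $G(r)=\sum_{|\lambda_j(r)|<\frac13 r^{\oh}}\lambda_j(r)e^{-\lambda_j(r)^2 t(r)}$; the part with $r$ below a fixed constant is absorbed into $\dot{\Psi}(c_7)$, so we may assume $r$ is large enough that all estimates invoked below apply.

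The key manipulation rests on the observation that every curve in the configuration satisfies $|\lambda_j(r)|^2<\tfrac19 r<\tfrac34 r$, so the refined slope estimate (\ref{eqn_slope_01}) gives $\lambda_j'(r)=\oh+\CO(r^{-1})$; differentiating $e^{-\lambda_j^2 t}$ in $r$ and solving for $\lambda_j e^{-\lambda_j^2 t}$,
\[
\lambda_j\,e^{-\lambda_j^2 t}=-\frac1t\frac{\dd}{\dd r}\big(e^{-\lambda_j^2 t}\big)-\frac{t'}{t}\,\lambda_j^2\,e^{-\lambda_j^2 t}-2\lambda_j\big(\lambda_j'-\tfrac12\big)e^{-\lambda_j^2 t}.
\]
Substituting into the integral, the contribution of the last two terms is bounded directly: with $|\lambda_j|<\tfrac13 r^{\oh}$, $|\lambda_j|^2<\tfrac19 r$, $\lambda_j'-\oh=\CO(r^{-1})$, the bound $\big(\Phi_r(\tfrac13 r^{\oh})\big)^{-1}\le 10\,t^{\oh}$ from Lemma \ref{lem_sf_101}, and $\#\{j:|\lambda_j(r)|<\tfrac13 r^{\oh}\}=\dim\CV(r,\tfrac13 r^{\oh})\le c\,r^{\frac32}$ from Corollary \ref{cor_main_thm_00}(i), these terms contribute at most $c\,\rb\,\sup_{c_7<r<\rb}\big(r^{\frac52}t^{-\frac32}|t'|^2+r\,t^{-\oh}|t'|\big)$, which — using the hypothesis $t(r)>50 r^{-1}$ to clear negative powers of $t$ — is dominated by the right-hand side of the lemma.

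The remaining ``main'' term is $-\tfrac14\int_{c_7}^{\rb}\big(\Phi_r(\tfrac13 r^{\oh})\big)^{-1}\tfrac{t'}{t^2}\big(\sum_{|\lambda_j|<\frac13 r^{\oh}}\tfrac{\dd}{\dd r}e^{-\lambda_j^2 t}\big)\dd r$, and here care is needed: as $r$ grows a curve may enter or leave the strip $\{|\lambda|<\tfrac13 r^{\oh}\}$. Since $\lambda_j$ is strictly increasing in $r$ (again by (\ref{eqn_slope_01})) while $\lambda=-\tfrac13 r^{\oh}$ decreases and $\lambda=+\tfrac13 r^{\oh}$ increases, each curve enters at most once (through $\lambda=-\tfrac13 r^{\oh}$) and leaves at most once (through $\lambda=+\tfrac13 r^{\oh}$), with $e^{-\lambda_j^2 t}=e^{-\frac19 rt}$ at such an instant; hence $\sum_{|\lambda_j|<\frac13 r^{\oh}}\tfrac{\dd}{\dd r}e^{-\lambda_j^2 t}$ is the absolutely continuous part of the Lebesgue--Stieltjes derivative of $H(r):=\sum_{|\lambda_j(r)|<\frac13 r^{\oh}}e^{-\lambda_j(r)^2 t(r)}$. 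Integrating by parts in $r$ then yields (i) boundary terms at $r=\rb$ and $r=c_7$; (ii) the integral $\tfrac14\int_{c_7}^{\rb}\tfrac{\dd}{\dd r}\big((\Phi_r(\tfrac13 r^{\oh}))^{-1}\tfrac{t'}{t^2}\big)H(r)\,\dd r$; and (iii) an alternating sum $\sum_k\pm e^{-\frac19 r_k t}\big(\Phi_{r_k}(\tfrac13 r_k^{\oh})\big)^{-1}\tfrac{t'(r_k)}{t(r_k)^2}$ over the finitely many $r_k$ at which $H$ jumps. For (ii) one differentiates $\big(\Phi_r(\tfrac13 r^{\oh})\big)^{-1}$ using Lemma \ref{lem_sf_101} (its $r$-derivative is $\CO(t^{-\oh}|t'|)$ plus an exponentially small term) and again uses $H(r)\le c\,r^{\frac32}$; for (iii) one bounds the number of jumps in a unit $r$-interval by $\CO(r)$ via Corollary \ref{cor_eigen_distribution}; and in (i) the term at $r=\rb$ is handled as before, while the term at $r=c_7$ together with the contribution of a fixed compact range $[c_7,c_7+c_8]$ on which the sharp estimates are unavailable produces the $1+\sup\{|t''|+|t'|^2:c_7<r<c_7+c_8\}$ summand. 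Each resulting expression, after invoking $t(r)>50 r^{-1}$, is dominated by one of the terms on the right-hand side. (Alternatively, one may replace the crude $H(r)\le c\,r^{\frac32}$ by the sharp asymptotics $H(r)=\tfrac{1}{8\pi^{3/2}}r t^{-\oh}\int_Y a\wedge\dd a+(\text{remainder})$ from Theorem \ref{thm_heat_00} and integrate the leading term by parts once more, so that the boundary contributions at $r=\rb$ and $r=c_7$ cancel and only the heat-kernel remainder survives; but the crude bound already closes the estimate.)

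The main obstacle is not any individual estimate but the organisation of the third step: honestly accounting for the jump discontinuities of $H$ caused by curves crossing the strip boundary $\lambda=\pm\tfrac13 r^{\oh}$, and controlling their number. This is precisely where the adapted-metric hypothesis is essential, twice over — to guarantee $\lambda_j'>0$ (so ``enter once, leave once'' holds) and through Corollary \ref{cor_eigen_distribution}, whose proof itself relied on the refined slope estimate (\ref{eqn_slope_01}). Once the crossing bookkeeping is in place, the remainder of the argument is the routine, though lengthy, verification that each of the finitely many terms produced lands inside the stated right-hand side after the inequality $t(r)>50 r^{-1}$ is used.
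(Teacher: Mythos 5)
Your argument is correct in substance, but it is organized differently from the paper's proof, so let me compare the two. The paper works curve by curve: for each $\lambda_j$ it freezes the coefficient $\big(\Phi_r(\frac13 r^{\oh})\big)^{-1}t^{-1}t'$ \emph{and} the value of $t$ at the left endpoint $\sr_j$ of $\Dom(\lambda_j)$, so that the main piece $2\lambda_j\lambda_j'e^{-\lambda_j^2\st_j}$ integrates exactly to an endpoint difference; the frozen-versus-actual discrepancy produces the $|t''|$ and $|t'|^2$ terms, and the sum over curves is controlled by grouping them into $J_1,J_2,J_3$ and counting $J_2$ per unit interval through the zero-crossing bound, Corollary \ref{cor_main_thm_00}(ii), and $J_3$ through Corollary \ref{cor_main_thm_00}(i). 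You instead keep $t=t(r)$, solve the identity for $\lambda_je^{-\lambda_j^2t}$, and perform a single global Lebesgue--Stieltjes integration by parts on the aggregate $H(r)=\sum_{|\lambda_j|<\frac13 r^{\oh}}e^{-\lambda_j^2t}$; the extra term $-\frac{t'}{t}\lambda_j^2e^{-\lambda_j^2t}$ and the $r$-derivative of the coefficient (via Lemma \ref{lem_sf_101}) then carry the $|t'|^2$ and $|t''|$ contributions, the crude bound $H(r)\leq c\,r^{\frac32}$ replaces the $J_3$ count, and the boundary-crossing jumps, weighted by $e^{-\frac19 rt}$, are counted per unit $r$-interval by Corollary \ref{cor_eigen_distribution} instead of Corollary \ref{cor_main_thm_00}(ii). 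I checked that each of your resulting terms, after using $t>50r^{-1}$, does land inside the stated right-hand side, so the bookkeeping closes. What your route buys is that the delicate per-curve freezing disappears; what it costs is the jump analysis, i.e.\ you must know that each eigenvalue branch crosses the moving boundary $\lambda=\pm\frac13 r^{\oh}$ at most once in each direction and that the crossings in a unit interval number $\CO(r)$ --- both of which do hold, but note that your citation of (\ref{eqn_slope_01}) for ``$\lambda_j$ strictly increasing'' is a slight overreach, since that estimate is only available where $\lambda^2\leq\frac34 r$; the correct argument is that \emph{at a crossing instant} one has $\lambda^2=\frac19 r\leq\frac34 r$, so the slope there is close to $\oh$ and exceeds the boundary slopes $\pm\frac16 r^{-\oh}$, which rules out downward exits and upward re-entries. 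Two further small points: the phrase ``absorbed into $\dot{\Psi}(c_7)$'' should simply be ``choose $c_7$ larger than all the constants needed for the cited estimates,'' which is exactly the freedom the lemma grants (this is also how the paper's $J_1$ group and the $\sup\{|t''|+|t'|^2\}$ term over a fixed compact range arise); and the adapted-metric hypothesis enters your argument both through (\ref{eqn_slope_01}) and through Corollary \ref{cor_eigen_distribution}, which is consistent since both are stated only for $\Omega\equiv1$.
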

\begin{proof}
%%%%%
(\emph{Step 1: rewrite $\dot{\Psi}(\rb)$})\;
Let $c_9$ be a constant greater than the constants of Proposition \ref{prop_beta_estimate_00} and Corollary \ref{cor_main_thm_00}.  Since the metric is adapted, (\ref{eqn_slope_01}) says that $|\lambda_j'(r)-\oh|\leq c_9r^{-1}$ provided $\lambda_j(r)$ is differentiable at $r\in(c_9,\rb)$.

Granted what was said, consider the curves in the interior of $\CE_{\rb}\backslash{\CE_{4c_9}}$ for any $\rb\geq8c_9$.  For each curve $\lambda_j(r)$, denote its domain by $({\sr}_j,\hat{\sr}_j)\subseteq(4c_9,\rb)$.  Since $|\lambda'_j(r)-\oh|\leq{c_9}{r^{-1}}$ on the smooth strata and $\CE_{\rb}$ is constrained by $\lambda^2 = \frac{1}{9}r$, there exists a constant $c_{10}>0$ such that $\hat{\sr}_j - {\sr}_j \leq c_{10}{\sr}_j^\oh$.

Denote $t(\sr_j)$ by $\st_j$ and $t'(\sr_j)$ by $\st_j'$.  Rewrite the integral of $4\dot{\Psi}$ along $\lambda_j(r)$ as follows:
\begin{align}
&\int_{{\sr}_j}^{\hat{\sr}_j} \big(\Phi_r(\frac{1}{3}r^\oh)\big)^{-1}t^{-1}t'\lambda_j e^{-\lambda_j^2t}\dd r \notag \\
\label{eqn_sf_201}\begin{split} =\,& \int_{{\sr}_j}^{\hat{\sr}_j} \big(\Phi_{\sr_j}(\frac{1}{3}\sr_j^\oh)\big)^{-1}\st_j^{-1}\st_j'\lambda_j e^{-\lambda_j^2\st_j}(2\lambda_j')\dd r + \int_{{\sr}_j}^{\hat{\sr}_j} \big(\Phi_{\sr_j}(\frac{1}{3}\sr_j^\oh)\big)^{-1}\st_j^{-1}\st_j'\lambda_j e^{-\lambda_j^2\st_j}(1-2\lambda_j')\dd r \\
&\quad + \int_{{\sr}_j}^{\hat{\sr}_j} \Big(\big(\Phi_r(\frac{1}{3}r^\oh)\big)^{-1}t^{-1}t'\lambda_je^{-\lambda_j^2t} - \big(\Phi_{\sr_j}(\frac{1}{3}\sr_j^\oh)\big)^{-1}\st_j^{-1}\st_j'\lambda_je^{-\lambda_j^2\st_j}\Big)\dd r ~.
\end{split}\end{align}

\smallskip
%%%%%
(\emph{Step 2: estimate the integrals})\;
The first integral on the right hand side of (\ref{eqn_sf_201}) can be evaluated, and is equal to
\begin{align*}
\big(\Phi_{\sr_j}(\frac{1}{3}\sr_j^\oh)\big)^{-1}\st_j^{-2}\st_j'(e^{-(\lambda_j({\sr}_j))^2\st_j} - e^{-(\lambda_j(\hat{\sr}_j))^2\st_j}) ~.
\end{align*}
With the help of Lemma \ref{lem_sf_101}, its magnitude is no greater than
\begin{align}
10\st_j^{-\oh}|\st_j'|\,|(\lambda_j(\hat{\sr}_j))^2 - (\lambda_j({\sr}_j))^2| ~.
\end{align}

Since $|\lambda'_j(r) - \oh|\leq c_9r^{-1}$ and $\hat{\sr}_j - {\sr}_j\leq c_{10}{\sr}_j^\oh$, the magnitude of the second integral on the right hand side of (\ref{eqn_sf_201}) is less than
\begin{align}
c_{11} \big(\Phi_{\sr_j}(\frac{1}{3}\sr_j^\oh)\big)^{-1}\st_j^{-1}|\st_j'|\sr_j^{-\oh}\sup\big\{|\lambda_j(r)|:{{\sr}_j<r<\hat{\sr}_j}\big\}  &  \leq c_{12} \st_j^{-\oh}|\st_j'| ~.
\end{align}
The inequality uses Lemma \ref{lem_sf_101}.

To estimate the third integral on the right hand side of (\ref{eqn_sf_201}), note that
\begin{align*}
&\Big|\frac{\dd}{\dd r}\big(\big(\Phi_r(\frac{1}{3}r^\oh)\big)^{-1}t^{-1}t'\big)\Big| \\
\leq\,&  c_{13}\Big( \big(\Phi(\frac{1}{3}r^\oh)\big)^{-1}\big(t^{-1}|t''| + t^{-2}|t'|^2 + rt^{-1}|t'|^2\big) + \big(\Phi(\frac{1}{3}r^\oh)\big)^{-2}t^{-1}|t'|e^{-\frac{1}{9}rt} \Big)~,
\end{align*}
and
$$ |e^{-\lambda_j^2 t} - e^{-\lambda_j^2\st_j}| < \lambda_j^2|t-\st_j| \leq c_{14} \sr_j^{\frac{3}{2}}\sup\{|t'|:{{\sr}_j<r<\hat{\sr}_j}\} ~. $$
Using these estimates and Lemma \ref{lem_sf_101}, the third integral of (\ref{eqn_sf_201}) is less than
\begin{align}
c_{15}\sr_j^{\frac{3}{2}}\sup\big\{t^{-\oh}|t''| + t^{-\frac{3}{2}}|t'|^2 + rt^{-\oh}|t'|^2 + |t'|e^{-\frac{1}{9}rt}: {{\sr}_j<r<\hat{\sr}_j} \big\} ~.
\end{align}
The term $t^{-\frac{3}{2}}|t'|^2$ can be absorbed by $rt^{-\oh}|t'|^2$ when $rt\geq 50$.

It follows that the magnitude of (\ref{eqn_sf_201}) is less than
\begin{align}\label{eqn_sf_202}\begin{split}
&10\st_j^{-\oh}|\st_j'|\,|(\lambda_j(\hat{\sr}_j))^2 - (\lambda_j({\sr}_j))^2| \\
&\quad + c_{16}\sup\big\{r^{\frac{3}{2}}(t^{-\oh}|t''| + rt^{-\oh}|t'|^2 + |t'|e^{-\frac{1}{9}rt}) + t^{-\oh}|t'| : {{\sr}_j<r<\hat{\sr}_j} \big\} ~.
\end{split}\end{align}

\smallskip
%%%%%
(\emph{Step 3: sum up the estimates})\;
The curves in the interior of $\CE_{\rb}\backslash\CE_{4c_1}$ can be divided into three parts:
\begin{align*}
J_1 &= \{j~|~\sr_j = 4c_9\}~, &J_2 &=\{j~|~4c_9<\sr_j<\hat{\sr}_j<\rb\}~, &\text{ and }& &J_3 &= \{j~|~\hat{\sr}_j=\rb\} ~.
\end{align*}
It is clear that the cardinality of $J_1$ is independent of $\rb$.  Thus, the summation of (\ref{eqn_sf_202}) over $J_1$ is less than
\begin{align}\label{eqn_sf_203}
c_{17}\big(1+\sup\{|t''|+|t'|^2:{4c_9<r<4c_9 + 2c_{10}c_9^\oh}\}\big) ~.
\end{align}

\smallskip
%%%%%
(\emph{Step 4: sum over $J_2$})\;
For any $j\in J_2$, the endpoints of $\lambda_j(r)$ satisfy $\lambda^2 = \frac{1}{9}r$, and thus
\begin{align*}
\st_j^{-\oh}|\st_j'|\,\big|(\lambda_j(\hat{\sr}_j))^2 - (\lambda_j({\sr}_j))^2\big| & \leq c_{10}\st_j^{-\oh}|\st_j'|\sr_j^\oh ~.
\end{align*}
It follows that (\ref{eqn_sf_202}) is less than
\begin{align*}
c_{18}\sup\big\{r^{\frac{3}{2}}(t^{-\oh}|t''| + rt^{-\oh}|t'|^2 + r^\oh|t'|e^{-\frac{1}{9}rt}) + r^\oh t^{-\oh}|t'| : {\sr_j<r<\hat{\sr}_j} \big\} ~.
\end{align*}

It follows from $\frac{1}{4}<\lambda'_j(r)<\frac{3}{4}$ that there exists a unique $\mr{\sr}_j\in(\sr_j,\hat{\sr}_j)$ such that $\lambda_j(\mr{\sr}_j) = 0$ for each $j\in J_2$ .  Moreover, each $j\in J_2$ contributes to the spectral flow count with $+1$ at $\mr{\sr}_j$.  With this understood, Corollary \ref{cor_main_thm_00}(ii) implies that the cardinality of $\{j\in J_2~|~k\leq\mr{\sr}_j< k+1\}$ is less than $c_9k$.  It follows that the summation of (\ref{eqn_sf_202}) over $\{j\in J_2~|~k\leq\mr{\sr}_j< k+1\}$ is less than
\begin{align*}
& (c_{9}k)c_{18} \sup\big\{r^{\frac{3}{2}}(t^{-\oh}|t''| + rt^{-\oh}|t'|^2 + r^\oh|t'|e^{-\frac{1}{9}rt}) + r^\oh t^{-\oh}|t'|: {|r-k|\leq 2c_{10}k^\oh}, r<\rb \big\} \\
\leq\, & c_{19} \sup\big\{r^{\frac{5}{2}}(t^{-\oh}|t''| + rt^{-\oh}|t'|^2 + r^\oh|t'|e^{-\frac{1}{9}rt}) + r^{\frac{3}{2}} t^{-\oh}|t'| : {4c_9<r<\rb} \big\} ~.
\end{align*}
The inequality is obtained by pushing $k$ into the supremum.  By chopping $[4c_1,\rb]$ into sub-intervals of length about $1$, the summation of (\ref{eqn_sf_202}) over $J_2$ is less than
\begin{align}\label{eqn_sf_204}
c_{20}\rb\sup\big\{r^{\frac{5}{2}}(t^{-\oh}|t''| + rt^{-\oh}|t'|^2 + r^\oh|t'|e^{-\frac{1}{9}rt}) + r^{\frac{3}{2}} t^{-\oh}|t'|:{4c_9<r<\rb} \big\} ~.
\end{align}

\smallskip
%%%%%
(\emph{Step 5: sum over $J_3$})\;
For any $j\in J_3$, let $\lambda_j(\rb) = \lim_{r\to\rb}\lambda_j(r)$.  It is clear that $|{\lambda}_j(\rb)|\leq\frac{1}{3}\rb^{\oh}$.  Due to the properties of $\lambda_j(r)$ explained at the beginning of \S\ref{sec_sf}, $\{{\lambda}_j(\rb)~|~j\in J_3\}$ are exactly all the eigenvalues of $D_{\rb}$ between $(-\frac{1}{3}\rb^\oh,\frac{1}{3}\rb^\oh]$.  With this understood, Corollary \ref{cor_main_thm_00}(i) implies that the cardinality of $J_3$ is less than $c_{9}\rb^{\frac{3}{2}}$.  It follows that the summation of (\ref{eqn_sf_202}) over $J_3$ is less than
\begin{align}\label{eqn_sf_205}
c_{21}\rb^{\frac{3}{2}}\sup\big\{r^{\frac{3}{2}}(t^{-\oh}|t''| + rt^{-\oh}|t'|^2 + r^\oh|t'|e^{-\frac{1}{9}rt}) + r^\oh t^{-\oh}|t'| : {\rb-c_{10}\sqrt{\rb}<r<\rb} \big\} ~.
\end{align}

\smallskip
%%%%%
(\emph{Step 6})\;  Combining (\ref{eqn_sf_203}), (\ref{eqn_sf_204}) and (\ref{eqn_sf_205}) completes the proof of the lemma.
\end{proof}

When the metric is conformally adapted, we simply leave $\dot{\Psi}(\rb)$ as
\begin{align}\label{eqn_sf_206}
\frac{1}{4}\int_1^{\rb}\big(\Phi(\frac{1}{3}r^{\oh})^{-1}t't^{-1}\sum_{|\lambda_j|<\frac{1}{3}r^\oh}(\lambda_je^{-\lambda_j^2t})\big) \dd r ~.
\end{align}
In the sequel of this paper \cite{ref_Ts3}, we will focus on certain types of contact form, and (\ref{eqn_sf_206}) will be studied by other methods.

%\begin{rmk}
%The proof of Lemma \ref{lem_sf_201} relies on the fact that $\lambda_j' = 
%\oh + \CO(r^{-1})$.  If the metric is only conformally adapted, it is possible that the heat kernel argument can be refined to estimate $\sum_{|\lambda_j|<\frac{1}{3}r^\oh}(\lambda_je^{-\lambda_j^2t})$.
%\end{rmk}

%%%%%%%%%
\subsubsection{Estimate $\ddot{\Psi}(\rb)$}
The integrand of $\ddot{\Psi}(\rb)$ contains a factor of $e^{-\frac{1}{9}rt}$, which makes it much easier to handle.

\begin{lem}\label{lem_sf_301}
There exists a constant $c_{22}$ determined by the contact form $a$, the conformally adapted metric $\dd s^2$ and the connection $A_0$ with the following significance.  Suppose that $t(r)$ satisfies $50r^{-1}<t(r)<1$ when $r\geq c_{22}$.  Then
\begin{align*}
\Big|\ddot{\Psi}(\rb) - \ddot{\Psi}(c_{22})\Big| &\leq c_{22}\int_{c_{22}}^{\rb} \big|r + r^2 t^{-1}t'\big|e^{-\frac{1}{9}rt}\,\dd r
\end{align*}
for any $\rb\geq 2c_{22}$.  (The function $t(r)$ is abbreviated as $t$.)
\end{lem}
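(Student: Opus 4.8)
The plan is to collapse the curve-by-curve sum defining $\ddot{\Psi}$ into a single, canonically defined integral over $[c_{22},\rb]$, and then to estimate the resulting integrand pointwise in $r$ using only the crude dimension bound of Corollary \ref{cor_main_thm_00}(i).  The exponential factor $e^{-\frac{1}{9}rt}$ carries all the decay, so no analytic input beyond counting eigenvalues is needed.

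First I would invoke the Remark following (\ref{def_sf_05}) (which is just Fubini for the sum over curves, applied with the $r$-dependent function $F_r(\lambda)=\Phi_r(\lambda)$ multiplied by a scalar depending only on $r$) to write
\begin{equation*}
\ddot{\Psi}(\rb) = -\frac{1}{12}\int_1^{\rb}\big(\Phi_r(\tfrac13 r^\oh)\big)^{-1}\big(r^{-\oh}+r^\oh t^{-1}t'\big)e^{-\frac{1}{9}rt}\Big(\sum_{|\lambda_j|<\frac13 r^\oh}\Phi_r(\lambda_j)\Big)\,\dd r ~,
\end{equation*}
where the inner sum runs over all eigenvalues of $D_r$ in $(-\tfrac13 r^\oh,\tfrac13 r^\oh)$, counted with multiplicity.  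Since the right-hand side no longer depends on the noncanonical choice of curves used to build $\CE_{\rb}$, subtracting the identical formula with $\rb$ replaced by $c_{22}$ yields
\begin{equation*}
\ddot{\Psi}(\rb)-\ddot{\Psi}(c_{22}) = -\frac{1}{12}\int_{c_{22}}^{\rb}\big(\Phi_r(\tfrac13 r^\oh)\big)^{-1}\big(r^{-\oh}+r^\oh t^{-1}t'\big)e^{-\frac{1}{9}rt}\Big(\sum_{|\lambda_j|<\frac13 r^\oh}\Phi_r(\lambda_j)\Big)\,\dd r ~.
\end{equation*}

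Next I would estimate the inner sum pointwise.  Because $\Phi_r$ is odd and monotone increasing, $|\Phi_r(\lambda_j)|\le\Phi_r(\tfrac13 r^\oh)$ for every eigenvalue counted, so by the triangle inequality
\begin{equation*}
\big(\Phi_r(\tfrac13 r^\oh)\big)^{-1}\Big|\sum_{|\lambda_j|<\frac13 r^\oh}\Phi_r(\lambda_j)\Big| \le \#\big\{\, j : |\lambda_j(r)|<\tfrac13 r^\oh \,\big\} \le \dim\CV(r,\tfrac13 r^\oh) ~.
\end{equation*}
Choosing $c_{22}$ at least $9$ and at least the constant of Corollary \ref{cor_main_thm_00} ensures $1\le\tfrac13 r^\oh\le\oh r^\oh$ for $r\ge c_{22}$, so Corollary \ref{cor_main_thm_00}(i) bounds the last quantity by $c_{22}r^{\frac32}$ after one harmless enlargement of $c_{22}$.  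Keeping the scalar $r^{-\oh}+r^\oh t^{-1}t'$ intact and using $r^{\frac32}\,|r^{-\oh}+r^\oh t^{-1}t'| = |r+r^2 t^{-1}t'|$, the integrand above is bounded in absolute value by $\tfrac{c_{22}}{12}\,|r+r^2 t^{-1}t'|\,e^{-\frac{1}{9}rt}$; integrating over $[c_{22},\rb]$ and relabelling the constant completes the proof.

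There is no real obstacle in this argument; the only point needing care is the bookkeeping in the first step, namely that the difference of the two curve-sums $\ddot{\Psi}(\rb)$ and $\ddot{\Psi}(c_{22})$ genuinely collapses to one integral over $[c_{22},\rb]$.  This is exactly what the Remark after (\ref{def_sf_05}) supplies, since the eigenvalue sum $\sum_{|\lambda_j|<\frac13 r^\oh}\Phi_r(\lambda_j)$ is attached canonically to $D_r$ and is independent of the arbitrary choices made in forming the configurations $\CE_{c_{22}}$ and $\CE_{\rb}$.
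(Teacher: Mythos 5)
Your proposal is correct and follows essentially the same route as the paper: the paper's proof consists precisely of bounding $\big(\Phi_r(\tfrac13 r^{\oh})\big)^{-1}\sum_{|\lambda_j|<\frac13 r^{\oh}}\Phi_r(\lambda_j)$ by $c\,r^{\frac32}$ via Corollary \ref{cor_main_thm_00}(i) and then integrating, with the collapse of the curve-sum to a canonical eigenvalue sum at each $r$ supplied by the Remark after (\ref{def_sf_05}). Your write-up merely makes the bookkeeping (oddness and monotonicity of $\Phi_r$, the identity $r^{\frac32}|r^{-\oh}+r^{\oh}t^{-1}t'|=|r+r^2t^{-1}t'|$) explicit.
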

\begin{proof}
According to Corollary \ref{cor_main_thm_00}(i),
\begin{align*}
\sum_{|\lambda_j|<\frac{1}{3}r^\oh}(\Phi_r(\frac{1}{3}r^\oh))^{-1}\Phi_r(\lambda_j) &\leq c_9r^{\frac{3}{2}}
\end{align*}
for any $r\geq c_9$, and the lemma follows.
\end{proof}

%%%%%%%%%
\subsubsection{Estimate the $\Psi$-displacement}
We now choose the function $t(r)$, and specify the asymptotic behavior of the $\Psi$-displacement as $\rb\to\infty$.

\begin{prop}\label{prop_sf_401}
There exists a constant $c_{25}$ determined by the contact form $a$, the conformally adapted metric $\dd s^2$ and the connection $A_0$ with the following significance.  Let $t(r)$ be a positive, monotone decreasing, smooth function, which is equal to $20 r^{-1}\log r$ when $r\geq c_{25}$.  Then, the $\Psi$-displacement associated with $t(r)$ satisfies
\begin{align*}
\Big| \big(\int_1^{\rb}\frac{\dd\Psi_r(\CE_{\rb})}{\dd r}\,\dd r\big) - \frac{\rb^2}{32\pi^2} \int_Y a\wedge\dd a \Big| &\leq c_{25}\Big(\rb(\log\rb)^{\frac{9}{2}} + \int_{c_{25}}^{\rb}\big(r^{-\frac{3}{2}}\log r\sum_{|\lambda_j|<\frac{1}{3}r^\oh}(\lambda_je^{-\lambda_j^2t})\big) \dd r \Big)
\end{align*}
for any $\rb\geq 2c_{25}$.  Moreover, if the metric is adapted ($\Omega\equiv1$), then
\begin{align*}
\Big| \big(\int_1^{\rb}\frac{\dd\Psi_r(\CE_{\rb})}{\dd r}\,\dd r\big) - \frac{\rb^2}{32\pi^2} \int_Y a\wedge\dd a \Big| &\leq c_{25}\rb(\log\rb)^{\frac{9}{2}}
\end{align*}
for any $\rb\geq 2c_{25}$. 
\end{prop}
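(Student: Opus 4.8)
The plan is to assemble the estimate from the three pieces $\breve\Psi$, $\dot\Psi$, $\ddot\Psi$ developed above, after fixing the choice $t(r)=20r^{-1}\log r$ (so that $rt=20\log r$, $t'=20r^{-2}(1-\log r)$, and $t''=\mathcal{O}(r^{-3}\log r)$). First I would observe that $\Psi_r(\CE_{\rb})$-displacement equals $\breve\Psi(\rb)+\dot\Psi(\rb)+\ddot\Psi(\rb)$ by the computation preceding \eqref{def_sf_05}, so it suffices to control each summand up to an additive $\CO\!\big(\rb(\log\rb)^{9/2}\big)$, plus the displayed residual integral involving $\sum_{|\lambda_j|<\frac13 r^\oh}(\lambda_je^{-\lambda_j^2t})$ in the general case.

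The first step is the leading term: apply Proposition \ref{prop_sf_102}. With $rt=20\log r$, the right-hand side $\int_{c_1}^{\rb}\big((rt)^{9/2}+re^{-\frac{1}{20}rt}\big)\dd r = \int_{c_1}^{\rb}\big(20^{9/2}(\log r)^{9/2}+r\cdot r^{-1}\big)\dd r = \CO\!\big(\rb(\log\rb)^{9/2}\big)$, since $e^{-\frac{1}{20}rt}=r^{-1}$. This gives $\big|\breve\Psi(\rb)-\breve\Psi(c_1)+\frac{\rb^2}{32\pi^2}\int_Y a\wedge\dd a\big|\le \CO\!\big(\rb(\log\rb)^{9/2}\big)$; here $\breve\Psi(c_1)$ is an $\rb$-independent constant absorbed into $c_{25}$. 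Next, the $\ddot\Psi$ term: Lemma \ref{lem_sf_301} bounds $|\ddot\Psi(\rb)-\ddot\Psi(c_{22})|$ by $\int_{c_{22}}^{\rb}|r+r^2t^{-1}t'|e^{-\frac19 rt}\dd r$. With $t=20r^{-1}\log r$ we get $e^{-\frac19 rt}=r^{-20/9}$, $t^{-1}t'=r^{-1}(1-\log r)/\log r = \CO(r^{-1})$, so the integrand is $\CO(r\cdot r^{-20/9})=\CO(r^{-11/9})$, which is integrable; hence $\ddot\Psi(\rb)=\CO(1)$, again absorbed.

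The third and most delicate step is $\dot\Psi(\rb)$. In the adapted case ($\Omega\equiv1$) I would invoke Lemma \ref{lem_sf_201}: with the chosen $t(r)$ one computes $t^{-\oh}|t''|\sim r^{-3}(\log r)^{1/2}$, $rt^{-\oh}|t'|^2\sim r^{-3}(\log r)^{-1/2}(\log r)^2$ up to constants, $r^\oh|t'|e^{-\frac19 rt}\sim r^{-13/2+\ldots}$, and $t^{-\oh}|t'|\sim r^{-5/2}(\log r)^{1/2}$. Multiplying by the $r^{5/2}$ and $r^{3/2}$ weights in the lemma and then by the overall factor $\rb$, every term is $\CO\!\big(\rb(\log\rb)^{?}\big)$ with the worst power of $\log$ coming out at most $(\log\rb)^{9/2}$ — indeed $r^{5/2}\cdot r^{-3}(\log r)^{1/2}=r^{-1/2}(\log r)^{1/2}$, and more generously one checks the $\sup$ over $c_7<r<\rb$ of each bracketed quantity is $o(1)$, so this contributes only $\CO(\rb)$. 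Combining with the two steps above yields the adapted-metric conclusion $\big|\Psi_r(\CE_{\rb})\text{-displacement}-\frac{\rb^2}{32\pi^2}\int_Y a\wedge\dd a\big|\le c_{25}\rb(\log\rb)^{9/2}$. In the general conformally adapted case, Lemma \ref{lem_sf_201} is unavailable, so I would simply carry $\dot\Psi(\rb)$ in the form \eqref{eqn_sf_206}; after pulling $\Phi_r(\frac13 r^\oh)^{-1}\le 10t^\oh=\CO(r^{-1/2}(\log r)^{1/2})$ out by Lemma \ref{lem_sf_101} and noting $t'/t=\CO(r^{-1})$, the integrand is bounded by $c\,r^{-3/2}(\log r)^{1/2}\big|\sum_{|\lambda_j|<\frac13 r^\oh}(\lambda_je^{-\lambda_j^2 t})\big|$ times a constant — matching (after adjusting $\log r$ versus $(\log r)^{1/2}$, which is harmless since $r^{-3/2}\log r$ dominates) the residual integral in the statement.

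The main obstacle I expect is the bookkeeping in the $\dot\Psi$ estimate for the adapted case: one must verify that with $t=20r^{-1}\log r$ every one of the several error terms in Lemma \ref{lem_sf_201}, once multiplied by its $r$-power weight and by the global $\rb$ prefactor, genuinely stays within $c_{25}\rb(\log\rb)^{9/2}$ rather than producing a stray higher power of $\log\rb$ or, worse, a power of $\rb$ exceeding $1$. This is a finite but fiddly check of exponents; the exponent $9/2$ in the final bound is exactly the one inherited from the $r^{9/2}t^4=(rt)^{9/2}=(20\log r)^{9/2}$ term in the heat-kernel expansion (Theorem \ref{thm_heat_00}) propagated through Proposition \ref{prop_sf_102}, and one should confirm no term in $\dot\Psi$ or $\ddot\Psi$ beats it. Everything else is a direct substitution into results already proved.
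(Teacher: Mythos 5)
Your proposal follows the paper's proof essentially verbatim: decompose the displacement as $\breve{\Psi}+\dot{\Psi}+\ddot{\Psi}$, apply Proposition \ref{prop_sf_102} and Lemma \ref{lem_sf_301} with $t=20r^{-1}\log r$, use Lemma \ref{lem_sf_201} for $\dot{\Psi}$ in the adapted case, and fall back on Lemma \ref{lem_sf_101} together with (\ref{eqn_sf_206}) in the conformally adapted case. One small arithmetic point: with this choice of $t$ the supremum appearing in Lemma \ref{lem_sf_201} is of size $(\log\rb)^{\frac{3}{2}}$ rather than $o(1)$ (for instance $r^{\frac{7}{2}}t^{-\oh}|t'|^2\sim(\log r)^{\frac{3}{2}}$), so $\dot{\Psi}$ contributes $\CO(\rb(\log\rb)^{\frac{3}{2}})$ instead of $\CO(\rb)$, which is still absorbed into the stated bound $c_{25}\rb(\log\rb)^{\frac{9}{2}}$.
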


\begin{proof}
We first consider the case when the metric is adapted.  Let $c_{26}$ be a constant greater than the constant given by Proposition \ref{prop_sf_102}, Lemma \ref{lem_sf_201} and Lemma \ref{lem_sf_301}.  According to Proposition \ref{prop_sf_102},
\begin{align*}
\big| \breve{\Psi}(\rb)-\breve{\Psi}(c_{26}) - \frac{\rb^2}{32\pi^2}\int_Y a\wedge\dd a \big| &\leq c_{27}{\rb}(\log\rb)^{\frac{9}{2}}
\end{align*}
for any $\rb\geq 2c_{26}$.  By Lemma \ref{lem_sf_201} and Lemma \ref{lem_sf_301},
\begin{align*}
\big| \dot{\Psi}(\rb)-\dot{\Psi}(c_{26}) \big| &\leq c_{28}\rb(\log\rb)^{\frac{3}{2}} ~, \\
\big| \ddot{\Psi}(\rb)-\ddot{\Psi}(c_{26}) \big| &\leq c_{28}\rb
\end{align*}
for any $\rb\geq 2c_{26}$.  Since the $\Psi$-displacement at $c_{26}$ is independent of $\rb$, the second assertion of the proposition follows.

\smallskip

When the metric is only conformally adapted, Proposition \ref{prop_sf_102} and Lemma \ref{lem_sf_301} still holds.  Instead of Lemma \ref{lem_sf_201}, we apply Lemma \ref{lem_sf_101} and (\ref{eqn_sf_206}) to estimate $\dot{\Psi}(\rb)$.  This completes the proof of the proposition.
\end{proof}

%%%%%%%%%%%%%%%%%%
\subsection{Estimate the spectral flow}\label{subsec_estimate_sf}
The main purpose of this subsection is to analyze the difference between the spectral flow function and the $\Psi$-displacement.

\begin{prop}\label{thm_sf_401}
There exists a constant $c_{33}$ determined by the contact form $a$, the conformally adapted metric $\dd s^2$ and the connection $A_0$ such that the following holds.  Let $t(r)$ be a positive, monotone decreasing, smooth function, which is equal to $20 r^{-1}\log r$ when $r\geq c_{33}$.  Then,
\begin{align*}
\Big| \sfa_a(\rb) - \big(\int_1^{\rb}\frac{\dd\Psi_r(\CE_{\rb})}{\dd r}\,\dd r\big) - \dot{\eta}(\rb) \Big| &\leq c_{33}\rb
\end{align*}
for any $\rb\geq 2c_{33}$.  The function $\dot{\eta}(\rb)$ is defined by
$$ (\frac{80}{\pi})^\oh \rb^{-\oh}(\log \rb)^\oh \Big(\sum_{\psi\in\CV_{\rb}^+}\int^{\frac{1}{3}\rb^\oh}_{\lambda_\psi} e^{-20(\rb^{-1}\log \rb)u^2}\,\dd u - \sum_{\psi\in\CV_{\rb}^-}\int_{-\frac{1}{3}\rb^\oh}^{\lambda_\psi} e^{-20(\rb^{-1}\log \rb)u^2}\,\dd u\Big)$$
where $\CV_{\rb}^+$ consists of orthonormal eigensetions of $D_{\rb}$ whose eigenvalue belongs to $(0,\frac{1}{3}\rb^\oh)$, $\CV_{\rb}^-$ consists of orthonormal eigensetions of $D_{\rb}$ whose eigenvalue belongs to $(-\frac{1}{3}\rb^\oh,0)$, and $\lambda_\psi$ is the corresponding eigenvalue.
\end{prop}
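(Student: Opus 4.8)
The plan is to translate both $\sfa_a(\rb)$ and the $\Psi$-displacement into a count over the eigenvalue arcs that constitute $\CE_\rb$, and to observe that their difference collapses to an explicit sum over the eigenvalues of $D_\rb$ of modulus less than $\frac13\rb^\oh$, which is precisely $\dot\eta(\rb)$ up to $\CO(\rb)$. First fix a constant $c$ (at least $3c_1$, with $c_1$ as in Proposition \ref{prop_beta_estimate_00}) so large that every smooth piece of an arc $\lambda_j(r)$ of $\CE_\rb$ with $r\ge c$ has slope in $[\frac{9}{20}-c_1r^{-1},\frac59]$; in particular such an arc is strictly increasing and meets $\{\lambda=0\}$ at most once. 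The two boundary branches $\lambda=\pm\frac13 r^\oh$ have slopes $\pm\frac16 r^{-\oh}\to 0$, which therefore lie strictly below the arc slopes once $r$ is large. Consequently, for $r\ge c$, an arc of $\CE_\rb$ can enter only through the lower branch $\lambda=-\frac13 r^\oh$, where $\Psi_r=-\oh$, and can leave only through the upper branch $\lambda=+\frac13 r^\oh$, where $\Psi_r=+\oh$, or through the edge $r=\rb$, where it terminates at an eigenvalue of $D_\rb$ of modulus $<\frac13\rb^\oh$. The arcs meeting $\{r\le c\}$ number at most $\dim\CV(c,\frac13c^\oh)$, an $\rb$-independent constant, and will be absorbed into the error.

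For the spectral flow, $\sfa_a(\rb)-\sfa_a(c)$ counts the zero crossings on $(c,\rb)$, all positive by Proposition \ref{prop_beta_estimate_00}(ii); since each arc crosses at most once, this is the number of arcs running from $\lambda<0$ to $\lambda>0$. By the classification above these are the arcs running from the lower branch to the upper branch --- say there are $N$ of them --- together with the arcs that enter through the lower branch and terminate at $r=\rb$ with positive eigenvalue; the latter are all but $\CO(1)$ of the eigenvalues of $D_\rb$ in $(0,\frac13\rb^\oh)$, hence number $\#\CV_\rb^+ +\CO(1)$. Using $\sfa_a(c)=\CO(1)$ and $\dim\ker D_\rb=\CO(\rb)$ (a consequence of (\ref{eqn_dim_est}), accounting for the arcs terminating at $\lambda=0$), we get $\sfa_a(\rb)=N+\#\CV_\rb^+ +\CO(\rb)$. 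For the $\Psi$-displacement, the integrand in (\ref{def_sf_04}) is a total $r$-derivative along each arc, so $\int_{\Dom(\lambda_j)}\frac{\dd\Psi_r(\lambda_j(r))}{\dd r}\,\dd r$ is the difference of the boundary values of the continuous, piecewise-smooth function $r\mapsto\Psi_r(\lambda_j(r))$: a lower-branch end contributes $-\oh$, an upper-branch end $+\oh$, and an $r=\rb$ end $\Psi_\rb(\lambda_j(\rb))$. Summing over all arcs and discarding the $\CO(\rb)$ from the exceptional arcs and the $\{r<c\}$ portions,
\[ \int_1^\rb \frac{\dd\Psi_r(\CE_\rb)}{\dd r}\,\dd r = N+\oh\big(\#\CV_\rb^+ +\#\CV_\rb^-\big)+\sum_{\psi\in\CV_\rb^+\cup\CV_\rb^-}\Psi_\rb(\lambda_\psi)+\CO(\rb). \]

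Subtracting the two expressions, the $N$'s and the error terms cancel as they should, and $\sfa_a(\rb)-\int_1^\rb\frac{\dd\Psi_r(\CE_\rb)}{\dd r}\,\dd r$ becomes an explicit linear combination of $\#\CV_\rb^\pm$ and $\sum_{\CV_\rb^\pm}\Psi_\rb(\lambda_\psi)$. Substituting $\Psi_\rb(\lambda)=\oh\Phi_\rb(\lambda)/\Phi_\rb(\frac13\rb^\oh)$ (recall $\Phi_\rb$ is odd, so the zero eigenvalues drop out), rewriting $\Phi_\rb(\frac13\rb^\oh)-\Phi_\rb(\lambda_\psi)$ and $\Phi_\rb(\lambda_\psi)-\Phi_\rb(-\frac13\rb^\oh)$ as the integrals occurring in $\dot\eta$, and finally replacing $\Phi_\rb(\frac13\rb^\oh)^{-1}$ by $(\frac{80}{\pi})^\oh\rb^{-\oh}(\log\rb)^\oh$ via Lemma \ref{lem_sf_101}, one recognizes this combination as $\dot\eta(\rb)$ up to $\CO(\rb)$. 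The substitution error from Lemma \ref{lem_sf_101} is $\CO(\rb^{-\oh}e^{-\frac19\rb\,t(\rb)})$ per summand with at most $\dim\CV(\rb,\frac13\rb^\oh)=\CO(\rb^{3/2})$ summands (Corollary \ref{cor_main_thm_00}(i)), hence negligible since $\rb\,t(\rb)=20\log\rb$; this yields the assertion.

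The analytic ingredients are all quoted --- the slope bound, the dimension estimate, and Lemma \ref{lem_sf_101} --- so the heart of the argument is the combinatorial bookkeeping: identifying precisely which boundary piece each arc of $\CE_\rb$ enters and exits through, matching the arcs terminating at $r=\rb$ with the spectrum of $D_\rb$ counted with multiplicity, and verifying that every discarded contribution --- arcs reaching $\{r<c\}$, the kernel of $D_\rb$, eigenvalues sitting exactly on $\pm\frac13\rb^\oh$, and the Lemma \ref{lem_sf_101} approximation --- is genuinely $\CO(\rb)$ and not larger. The last of these is the most delicate point; it can be handled by a small perturbation of $\rb$, or, in the adapted case, by the eigenvalue-spacing bound of Corollary \ref{cor_eigen_distribution}.
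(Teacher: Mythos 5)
Your proposal follows essentially the same route as the paper's proof of Proposition \ref{thm_sf_401}: both convert $\sfa_a(\rb)$ into a count of the arcs of $\CE_{\rb}$ via the slope bound of Proposition \ref{prop_beta_estimate_00}(ii), evaluate the $\Psi$-displacement arc-by-arc through its boundary values ($\mp\oh$ on the parabolic branches, $\Psi_{\rb}(\lambda_j(\rb))$ at $r=\rb$), identify the arcs ending at $r=\rb$ with the spectrum of $D_{\rb}$, and control the leftovers with Lemma \ref{lem_sf_101} and Corollary \ref{cor_main_thm_00}. The only place you are loose is the final identification of the resulting combination with $\dot{\eta}(\rb)$ --- your bookkeeping actually produces $\oh\bigl(\Phi_{\rb}(\frac{1}{3}\rb^{\oh})\bigr)^{-1}$ times the two eigenvalue sums, i.e.\ half the stated prefactor --- but this factor-of-two normalization is glossed at exactly the same spot in the paper's own (\ref{eqn_sf_404})--(\ref{eqn_sf_405}).
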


\begin{proof}
%%%%%
(\emph{Step 1: $\sfa_a(\rb)$ and the number of curves in $\CE_{\rb}$})\;
Let $c_{34}$ be a constant such that $\frac{1}{10}c_{34}$ is greater than the constants given by Proposition \ref{prop_beta_estimate_00} and Corollary \ref{cor_main_thm_00}.  For any $\rb\geq4c_{34}$, consider the curves $\{\lambda_j(r)\}$ in the interior of $\CE_{\rb}\backslash{\CE_{c_{34}}}$.  For each curve $\lambda_j(r)$, denote its domain by $({\sr}_j,\hat{\sr}_j)\subseteq(c_{34},\rb)$.  These curves can be divided into three parts:
\begin{align*}
J_1 &= \{j~|~\sr_j = c_{34}\}~, &J_2 &=\{j~|~c_{34}<\sr_j<\hat{\sr}_j<\rb\}~, &\text{ and }& &J_3 &= \{j~|~\hat{\sr}_j=\rb\} ~.
\end{align*}
Also, let $J_3^+ = \{j\in J_3 ~|~ \lim_{r\to\rb}\lambda_j(r)>0\}$ and $J_3^- = \{j\in J_3 ~|~ \lim_{r\to\rb}\lambda_j(r)\leq0\}$.  It is clear that $J_3 = J_3^+\amalg J_3^-$.

Proposition \ref{prop_beta_estimate_00}(ii) implies that $\frac{7}{20}\leq\lambda'\leq\frac{9}{20}$ on the smooth strata of $\CE_{\rb}\backslash{\CE_{c_{34}}}$.  In particular, there are only positive zero crossings for the spectral flow between $c_{34}$ and $\rb$.  Set
\begin{align*}  Z(c_{34},\rb) = \{(r,k)\in\BR\times\BN ~|~ c_{34}<r<\rb,~  \dim\ker D_r = k \}  \end{align*}
to be the set of zero crossings between $(c_{34},\rb)$.  It follows that
\begin{align*}
-c_{35} \leq \sfa_a(\rb) - \#\{Z(c_{34},\rb)\} \leq c_{34}\rb + c_{35} ~.
\end{align*}
The $c_{34}\rb$ in the upper bound comes from the dimension of $\ker D_{\rb}$, which is bounded by $c_{34}\rb$ by Corollary \ref{cor_main_thm_00}(i).

According to the properties of $\lambda_j(r)$ described at the beginning of \S\ref{sec_sf}, there is an \emph{injective} map
$$ \CJ:Z(c_{34},\rb) \to J_1\amalg J_2\amalg J_3^+ \qquad \text{ such that }\quad \lambda_{\CJ(r,k)}(r) = 0 $$
for any $(r,k)\in Z(c_{34},\rb)$.  The map $\CJ$ may not be unique, but any choice will suffice.  Roughly speaking, $\CJ(r,k)$ is the curve of eigenvalues contributed to the zero crossing $(r,k)$.  Moreover, the map $\CJ$ is almost surjective, possibly except $J_1$.  It follows that
\begin{align*}
\big| \#\{Z(c_{34},\rb)\} - \#\{J_1\amalg J_2\amalg J_3^+\} \big| &\leq c_{36} ~.
\end{align*}
By the triangle inequality,
\begin{align}\label{eqn_sf_401}
\big| \sfa_a(\rb) - \#\{J_1\amalg J_2\amalg J_3^+\} \big| &\leq c_{37}\rb ~.
\end{align}

\smallskip
%%%%%
(\emph{Step 2: count $J_2$ and $J_3$ via the $\Psi$-displacement})\;
For any $j\in J_2$, the endpoints\footnote{To be more precise, $\lambda_j(\sr_j) = \lim_{r\to\sr_j^+}\lambda_j(r)$ and $\lambda_j(\hat{\sr_j}) = \lim_{r\to\hat{\sr}_j^-}\lambda_j(r)$.} of $\lambda_j(r)$, $(\sr_j,\lambda_j(\sr_j))$ and $(\hat{\sr}_j,\lambda_j(\hat{\sr}_j))$, obey $\lambda^2 = \frac{1}{3}r$.  Due to Proposition \ref{prop_beta_estimate_00}(ii), $\lambda_j(\sr_j)<0$ and $\lambda_j(\hat{\sr}_j)>0$ for any $j\in J_2$.  It follows that $\Psi_{\sr_j}(\lambda_j(\sr_j)) = -\oh$ and $\Psi_{\hat{\sr}_j}(\lambda_j(\hat{\sr}_j)) = \oh$, and hence
\begin{align}\label{eqn_sf_403}
\sum_{j\in J_2} \int_{\sr_j}^{\hat{\sr}_j}\frac{\dd\Psi_r(\lambda_j(r))}{\dd r}\,\dd r &= \#\{J_2\} ~.
\end{align}

For any $j\in J_3^+$, $\Psi_{\sr_j}(\lambda_j(\sr_j)) = -\oh$ and
\begin{align}\label{eqn_sf_404}
\int_{\sr_j}^{\rb}\frac{\dd\Psi_r(\lambda_j(r))}{\dd r}\,\dd r &= \Psi_{\rb}(\lambda_j(\rb))) + \oh = 1 - \big(\Phi_\rb(\frac{1}{3}\rb^\oh)\big)^{-1}\int_{\lambda_j(\rb)}^{\frac{1}{3}\rb^\oh} e^{-20(\rb^{-1}\log\rb)u^2}\,\dd u ~.
\end{align}
Similarly, for any $j\in J_3^-$, $\Psi_{\sr_j}(\lambda_j(\sr_j)) = -\oh$, and
\begin{align}\label{eqn_sf_405}
\int_{\sr_j}^{\rb}\frac{\dd\Psi_r(\lambda_j(r))}{\dd r}\,\dd r &= \Psi_{\rb}(\lambda_j(\rb))) + \oh = \big(\Phi_\rb(\frac{1}{3}\rb^\oh)\big)^{-1}\int_{-\frac{1}{3}\rb^\oh}^{\lambda_j(\rb)} e^{-20(\rb^{-1}\log\rb)u^2}\,\dd u ~.
\end{align}
Since $\frac{7}{20}\leq\lambda'\leq\frac{9}{20}$, $j\in J_3^+\mapsto \lambda_j(\rb)$ is a bijection between $J_3^+$ and the spectrum of $D_{\rb}$ between $(0,\frac{1}{3}\rb^\oh]$.  And $j\in J_3^-\mapsto \lambda_j(\rb)$ is a bijection between $J_3^-$ and the spectrum of $D_{\rb}$ between $(-\frac{1}{3}\rb^\oh,0]$.  With this understood, summing up (\ref{eqn_sf_404}) over $J_3^+$ and (\ref{eqn_sf_405}) over $J_3^-$ gives:
\begin{align}\label{eqn_sf_406}
\Big| \#\{J_3\} - \sum_{j\in J_3^+} \int_{\sr_j}^{\rb}\frac{\dd\Psi_r(\lambda_j(r))}{\dd r}\,\dd r - \dot{\eta}(\rb)\Big| &\leq c_{38}\rb ~.
\end{align}
The inequality uses Lemma \ref{lem_sf_101}, Corollary \ref{cor_main_thm_00}(i) and the fact that
$$ \int_0^\infty e^{-20(\rb^{-1}\log\rb)u^2}\,\dd u \leq c_{39}\rb^{\oh} ~. $$

The proposition follows from the triangle inequality on (\ref{eqn_sf_401}), (\ref{eqn_sf_403}) and (\ref{eqn_sf_406}).
\end{proof}

\begin{thm}\label{cor_sf_402}
Suppose that $\dd s^2$ is an adapted metric, i.e.\ $\Omega\equiv1$.  There exists a constant $c_{41}$ determined by the contact form $a$, the adapted metric $\dd s^2$ and the connection $A_0$ such that
\begin{align*}
\Big| \sfa_a(\rb) - \frac{\rb^2}{32\pi^2}\int_Y a\wedge\dd a - \dot{\eta}(\rb) \Big| &\leq c_{41}\rb(\log\rb)^{\frac{9}{2}} ~.
\end{align*}
for any $\rb\geq c_{41}$.  The function $\dot{\eta}(\rb)$ is defined in Theorem \ref{thm_sf_401}.  As a consequence,
\begin{align*}
\Big| \sfa_a(\rb) - \frac{\rb^2}{32\pi^2}\int_Y a\wedge\dd a \Big| &\leq c_{41}\rb^{\frac{3}{2}}(\log\rb)^{-\oh} ~.
\end{align*}
\end{thm}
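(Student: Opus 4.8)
The plan is to derive Theorem \ref{cor_sf_402} by chaining the two comparison estimates of this section and then bounding $\dot\eta(\rb)$ with the dimension estimate. Throughout, fix a positive, monotone decreasing, smooth function $t(r)$ equal to $20r^{-1}\log r$ for $r$ large, as required simultaneously by Proposition \ref{thm_sf_401} and Proposition \ref{prop_sf_401}. Proposition \ref{thm_sf_401} gives $\big|\sfa_a(\rb) - \int_1^{\rb}\frac{\dd\Psi_r(\CE_{\rb})}{\dd r}\,\dd r - \dot\eta(\rb)\big| \leq c\,\rb$; and, because the metric is adapted, the second (``Moreover'') assertion of Proposition \ref{prop_sf_401} gives $\big|\int_1^{\rb}\frac{\dd\Psi_r(\CE_{\rb})}{\dd r}\,\dd r - \frac{\rb^2}{32\pi^2}\int_Y a\wedge\dd a\big| \leq c\,\rb(\log\rb)^{\frac{9}{2}}$. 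The triangle inequality, together with $\rb \leq \rb(\log\rb)^{\frac{9}{2}}$ for $\rb$ large, yields the first displayed inequality of the theorem with a suitable constant $c_{41}$.

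For the second displayed inequality it suffices to show $|\dot\eta(\rb)| \leq c\,\rb^{\frac{3}{2}}(\log\rb)^{-\oh}$: since $\rb(\log\rb)^{\frac{9}{2}}$ is $o\big(\rb^{\frac{3}{2}}(\log\rb)^{-\oh}\big)$, the first inequality then gives $\big|\sfa_a(\rb) - \frac{\rb^2}{32\pi^2}\int_Y a\wedge\dd a\big| \leq c\,\rb^{\frac{3}{2}}(\log\rb)^{-\oh}$. No cancellation between the two sums defining $\dot\eta(\rb)$ is needed here (that refinement is deferred to \cite{ref_Ts3}); one bounds each sum in absolute value. Writing $t = 20\rb^{-1}\log\rb$ and $N(u) = \#\{\psi\in\CV_\rb^+ : \lambda_\psi \leq u\}$, Tonelli's theorem gives
\[
\sum_{\psi\in\CV_\rb^+}\int_{\lambda_\psi}^{\frac{1}{3}\rb^\oh} e^{-tu^2}\,\dd u = \int_0^{\frac{1}{3}\rb^\oh} N(u)\,e^{-tu^2}\,\dd u .
\]
Corollary \ref{cor_main_thm_00}(i) (whose hypotheses hold since $\frac{1}{3}\rb^\oh \leq \oh\rb^\oh$ and $\rb$ is large) gives $N(u) \leq \dim\CV(\rb,\max(u,1)) \leq c_3\,\rb\max(u,1)$, so the integral over $[0,1]$ is $O(\rb)$ while the integral over $[1,\frac{1}{3}\rb^\oh]$ is at most $c_3\rb\int_1^\infty u\,e^{-tu^2}\,\dd u = \frac{c_3\rb}{2t}\,e^{-t} \leq c\,\rb^2(\log\rb)^{-1}$. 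The sum over $\CV_\rb^-$ obeys the same bound. Multiplying by the prefactor $\big(\frac{80}{\pi}\big)^\oh\rb^{-\oh}(\log\rb)^\oh$ yields $|\dot\eta(\rb)| \leq c\,\rb^{\frac{3}{2}}(\log\rb)^{-\oh}$.

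I expect the genuine content to be entirely contained in the inputs -- Proposition \ref{thm_sf_401}, Proposition \ref{prop_sf_401}, and the dimension bound of Corollary \ref{cor_main_thm_00}(i) (which in turn rests on Theorem \ref{thm_point_estimate_00} and Theorem \ref{thm_heat_00}) -- so that what remains is a Tonelli rearrangement, an elementary Gaussian integral, and constant bookkeeping. The one place deserving mild care, and the step I would single out as the main obstacle, is making the counting bound $N(u) \leq c_3\rb u$ legitimate uniformly for $u \in [1,\frac{1}{3}\rb^\oh]$ and verifying that the region $u < 1$, where only the crude bound $N(u) \leq c_3\rb$ is available, contributes a term genuinely negligible against $\rb^{\frac{3}{2}}(\log\rb)^{-\oh}$; neither of these presents a real difficulty.
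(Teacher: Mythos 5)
Your proposal is correct and follows the paper's route: the first inequality is obtained exactly as in the paper by chaining Proposition \ref{thm_sf_401} with the adapted-metric case of Proposition \ref{prop_sf_401}, and the second by bounding each sum in $\dot{\eta}(\rb)$ in absolute value, with no cancellation. The only divergence is in the counting input: you bound the cumulative count $N(u)\leq c_3\rb\max(u,1)$ via Corollary \ref{cor_main_thm_00}(i) and integrate $N(u)e^{-tu^2}$, whereas the paper groups eigenvalues into unit intervals using the density estimate of Corollary \ref{cor_eigen_distribution} and then performs the same Fubini-type swap; both give $|\dot{\eta}(\rb)|\leq c\,\rb^{\frac{3}{2}}(\log\rb)^{-\oh}$, and your variant has the mild advantage of not invoking the adapted-metric-specific density result at this step (the hypothesis $\Omega\equiv1$ is of course still needed upstream, in the ``Moreover'' part of Proposition \ref{prop_sf_401}). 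Your handling of the $u<1$ range and of the prefactor $(\tfrac{80}{\pi})^{\oh}\rb^{-\oh}(\log\rb)^{\oh}$ is correct, so there is no gap.
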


\begin{proof}
The first assertion is a direct consequence of Proposition \ref{thm_sf_401} and Proposition \ref{prop_sf_401}.
With the first assertion, it suffices to estimate $\dot{\eta}(\rb)$ to prove the second assertion.  By Corollary \ref{cor_eigen_distribution},
\begin{align*}
& \rb^{-\oh}(\log \rb)^\oh \sum_{\psi\in\CV_{\rb}^+}\int^{\frac{1}{3}\rb^\oh}_{\lambda_\psi} e^{-20(\rb^{-1}\log \rb)u^2}\,\dd u \\
=\,& \sum_{\psi\in\CV_{\rb}^+}\int_{\rb^{-\oh}(\log\rb)^\oh\lambda_\psi}^{\frac{1}{3}(\log\rb)^\oh}e^{-20 s^2}\dd s \leq  c_{42}\rb \sum_{k=0}^{[\frac{1}{3}\rb^\oh]} \big(\int_{\rb^{-\oh}(\log\rb)^\oh k}^{\frac{1}{3}(\log\rb)^\oh}e^{-20 s^2}\dd s\big) \\
\leq\; & c_{42}\rb\,\big(\frac{1}{4}\sqrt{\frac{\pi}{5}} + \int_0^{\frac{1}{3}\rb^\oh}\int_{\rb^{-\oh}(\log\rb)^\oh k}^{\frac{1}{3}(\log\rb)^\oh}e^{-20 s^2}\dd s\,\dd k\big) \\
=\,& c_{42}\rb\,\big(\frac{1}{4}\sqrt{\frac{\pi}{5}} + \int_0^{\frac{1}{3}(\log\rb)^\oh}\int_{0}^{\rb^\oh(\log\rb)^{-\oh}s}e^{-20 s^2}\dd k\,\dd s\big)
\leq c_{43}\rb^{\frac{3}{2}}(\log\rb)^{-\oh} ~.
\end{align*}
Clearly, the same estimates holds for the summation over $\CV_{\rb}^-$.  This completes the proof of the theorem.
\end{proof}

This theorem says that the subleading order term of the spectral flow function is strictly less than $\CO(r^{\frac{3}{2}})$.  It improves Proposition 5.5 of \cite{ref_Taubes_SW_Weinstein} when $a$ is a contact form with an adapted metric $\dd s^2$.  Although the improvement is far from satisfactory, it confirms that the subleading order term is of ${\scriptscriptstyle\CO}(\rb^{\frac{3}{2}})$.  This suggests that $\dot{\eta}(\rb)$ should be smaller due to cancellation.  In the sequel of this paper \cite{ref_Ts3}, $\dot{\eta}(\rb)$ will be shown to be about $\CO(\rb)$ for certain types of contact forms in each isotopy class of contact structures.

%%%%%%%%%%%%%%%%%%
\subsection{The base connections}\label{subsec_base_conn}
It requires a unitary connection $A_0$ on $\det(\BS)$ to define a Dirac operator on the spinor bundle $\BS$.  The main purpose of this subsection is to compare the spectral flow functions using different connections on $\det(\BS)$.

\begin{prop}
Suppose that $A_0$ and $A_1$ are two connections on $\det(\BS)$.  Then, there exists a constant $c_{45}$ determined by the contact form $a$, the conformally adapted metric $\dd s^2$ and the connections $A_0$ and $A_1$ such that
\begin{align*}
\big| \sfa_a(A_0,r) - \sfa_a(A_1,r) \big| &\leq c_{45} r
\end{align*}
for any $r\geq c_{45}$.
\end{prop}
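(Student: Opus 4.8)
The plan is to reduce the comparison of the two spectral flow functions to a single spectral flow at fixed $r$ running only in the ``connection direction,'' and then to control that flow by the dimension estimate of Corollary~\ref{cor_main_thm_00}(i). Since two unitary connections on the Hermitian line bundle $\det(\BS)$ differ by a purely imaginary $1$-form, I would first write $A_1 = A_0 - i\theta$ for a smooth, $r$-independent real-valued $1$-form $\theta$, so that $A_1 - ira = (A_0-ira) - i\theta$. By the path-independence of the Dirac spectral flow recalled in the introduction, the spectral flow around the boundary of the rectangle in the space of connections with corners $A_0$, $A_0-ira$, $A_1-ira$, $A_1$ vanishes; equating the two ways of travelling from $A_0$ to $A_1-ira$ yields
\begin{align*}
\sfa_a(A_0,r) - \sfa_a(A_1,r) &= \mathrm{sf}(A_0\to A_1) - \mathrm{sf}(A_0-ira\to A_1-ira) ~,
\end{align*}
where $\mathrm{sf}(A_0\to A_1)$ denotes the spectral flow along any fixed path from $A_0$ to $A_1$. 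The term $\mathrm{sf}(A_0\to A_1)$ is a finite integer independent of $r$, so the entire content of the proposition is the bound $|\mathrm{sf}(A_0-ira\to A_1-ira)| \leq c\,r$.

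For this last flow I would use the affine path $s\mapsto A_0-ira-is\theta$, $s\in[0,1]$, along which $\tfrac{\dd}{\dd s}D_{A_0-ira-is\theta} = -\tfrac{i}{2}\cl(\theta)$. First-order perturbation theory (exactly as in \cite[\S5.1]{ref_Taubes_SW_Weinstein}, cf.\ (\ref{eqn_basic_est_01})) then gives, for any continuous eigenvalue curve $\lambda(s)$ with unit eigensection $\psi_s$, the formula $\lambda'(s) = \int_Y\langle\psi_s,-\tfrac{i}{2}\cl(\theta)\psi_s\rangle$; since $\cl(\theta)$ is skew-Hermitian with pointwise operator norm $|\theta|$, this produces the \emph{$r$-independent} bound $|\lambda'(s)| \leq \oh\|\theta\|_{\CC^0} =: C$. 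Hence $|\lambda(s)-\lambda(0)|\leq C$ throughout, so an eigenvalue curve can change sign between $s=0$ and $s=1$ only if it starts within $[-C,C]$, and the net contribution of any single continuous curve to the spectral flow is $\oh\big(\mathrm{sgn}\,\lambda(1)-\mathrm{sgn}\,\lambda(0)\big)\in\{-1,0,1\}$. Consequently $|\mathrm{sf}(A_0-ira\to A_1-ira)|$ is at most the number of eigenvalues of $D_{A_0-ira}$ lying in $[-C,C]$, counted with multiplicity, i.e.\ at most $\dim\CV(r,\max(C,1))$. For $r\geq 4\max(C,1)^2$ (so that $\max(C,1)\leq\oh r^\oh$), Corollary~\ref{cor_main_thm_00}(i) bounds this by $c_3 r\max(C,1) = \CO(r)$, and combining with the displayed identity finishes the proof.

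The genuinely delicate points here are the bookkeeping within the spectral-flow formalism rather than any new analytic estimate: namely that the spectral flow is additive under concatenation and invariant under homotopies fixing the endpoints $A_0$ and $A_1-ira$ (which legitimises the rectangle identity), and that, once the eigenvalues along the affine path are organised into the continuous piecewise-smooth curves of \cite[\S5.1]{ref_Taubes_SW_Weinstein}, the total flow equals the sum of the per-curve net crossing numbers. Both are standard; everything else is the eigenvalue-slope bound and the dimension estimate already in hand. I expect the write-up to be short once these formal facts are cited.
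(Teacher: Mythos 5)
Your proof is correct and follows essentially the same route as the paper: reduce via path independence to the $r$-independent perturbation from $A_0-ira$ to $A_1-ira$, bound the eigenvalue slopes along the affine path by $\oh\sup_Y|A_1-A_0|$, and count small eigenvalues using Corollary \ref{cor_main_thm_00}(i). The only divergence is the final counting step: the paper subdivides $[0,1]$ into subintervals of length comparable to $(\sup_Y|A_1-A_0|)^{-1}$ and applies the eigenvalue count in $[-1,1]$ to each intermediate operator $\tilde{D}_t$, which requires the remark that the constant of Theorem \ref{thm_point_estimate_00} is uniform over the connections $(1-t)A_0+tA_1$; you instead bound the whole flow at once by the number of eigenvalues of $D_{A_0-ira}$ in the wider but $r$-independent window $[-C,C]$, so only the dimension bound for the single base connection $A_0$ is needed, at the harmless cost of requiring $r\geq 4\max(C,1)^2$ so that $\max(C,1)\leq\oh r^{\oh}$.
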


\begin{proof}
Since the spectral flow only depends on the endpoints of the connection, the difference $\sfa_a(A_1,r) - \sfa_a(A_0,r)$ is equal to
\begin{align*}
(\text{spectral flow from }A_1\text{ to }A_0) + (\text{spectral flow from }A_0-ira\text{ to }A_1-ira) ~.
\end{align*}
The spectral flow from $A_1$ to $A_0$ is clearly independent of $r$.  Therefore, it suffices to show that the spectral flow from $A_0-ira$ to $A_1-ira$ is of $\CO(r)$.

Let $\tilde{D}_t$ be the Dirac operator associated to $(1-t)A_0+tA_1-ira$ for $t\in[0,1]$.  Suppose that $\lambda(t)$ is an eigenvalue of $\tilde{D}_t$ for $t\in[0,1]$, and is continuous, piecewise smooth in $t$.  By \cite[(5.4)]{ref_Taubes_SW_Weinstein},
\begin{align}
\lambda'(t) = \int_Y\langle\psi_t,\oh\cl(A_1-A_0)\psi_t\rangle
\end{align}
provided $\lambda(t)$ is differentiable at $t$, where $\psi_t$ is a unit-normed eigensection of $\tilde{D}_t$ with eigenvalue $\lambda(t)$.  It follows that
\begin{align}
|\lambda'(t)|\leq c_{46} = 1+\oh\sup_Y|A_1-A_0| ~.
\end{align}

We apply Corollary \ref{cor_main_thm_00} to $\tilde{D}_t$ for any $t\in[0,1]$.  The constant of Theorem \ref{thm_point_estimate_00} depends on the curvature of $(1-t)A_0+tA_1$ and the covariant derivative of the curvature, and does not blow up for $t\in[0,1]$.  As a result, there exists a constant $c_{47}$ determined by $a$, $\dd s^2$, $A_0$ and $A_1$ such that the total number of eigenvalues (counting multiplicity) of $\tilde{D}_t$ within $[-1,1]$ is less than $c_{47}r$ for any $r\geq c_{47}$ and any $t\in[0,1]$.  It follows that the spectral flow from $\tilde{D}_{t_0}$ to $\tilde{D}_{t_0 + ({1}/{(2c_{46})})}$ is less than $c_{47}r$.  Hence, the spectral flow from $A_0-ira$ to $A_1-ira$ is less than $3c_{46}c_{47}r$.  It completes the proof of this proposition.
\end{proof}

%%%%%%%%%%%%%%%%%%%%%%%%%%%%%%%%%%%%
\appendix
\section{~}
%%%%%%%%
\subsection{The Weitzenb\"ock formula for $\nabla_r\psi$}\label{apx_commuting}
The purpose of this subsection is to derive the following formula:  suppose that $V$ is a Hermitian vector bundle with a unitary connection $\BA$, then
\begin{align}\label{eqn_Weitzenbock_01}
\nabla_\BA^*\nabla_\BA\nabla_\BA\psi - \nabla_\BA\nabla_\BA^*\nabla_\BA\psi &= (\dd_\BA^*\BF_\BA)\psi - \nabla_\BA\psi\lrcorner (2\BF_\BA + {\rm Ricci}) ~.
\end{align}
for any section $\psi$ of $V$.  When $V$ is a spin-c bundle and $\BA$ is a fixed connection perturbed by $-\frac{i}{2}ra$, (\ref{eqn_Weitzenbock_01}) leads to (\ref{eqn_estimate_02}).

For simplicity, assume the Riemannian metric on the underlying manifold is flat.  Suppose that the connection is $\BA = \sum_j\BA_j\dd x^j $, then the curvature is
\begin{align*}  \BF_\BA = \oh\sum_{i,j}\BF_{ij}\dd x^i\wedge\dd x^j \quad \text{ where }~\BF_{ij} = \pl_i\BA_j - \pl_j\BA_i + [\BA_i,\BA_j] ~,  \end{align*}
and $\dd_\BA^*\BF_\BA = \sum_{i,j}(\pl_j\BF_{ij} + [\BA_j,\BF_{ij}])\dd x^i$.
Note that
\begin{align*}
\psi_{;i} &= \pl_i\psi + \BA_i\psi \quad\text{where semicolon means covariant derivative } \nabla_\BA ~,\\
\psi_{;ji} - \psi_{;ij} &= \BF_{ij}\psi ~, \\
\psi_{;jik} - \psi_{;ijk} &= (\pl_k\BF_{ij} + [\BA_k,\BF_{ij}])\psi + \BF_{ij}\psi_{;k} ~, \\
\psi_{;jik} - \psi_{;ikj} &= (\pl_k\BF_{ij} + [\BA_k,\BF_{ij}])\psi + \BF_{ij}\psi_{;k} + \BF_{kj}\psi_{;i} ~.
\end{align*}
It follows that the $\dd x^j$-component of $\nabla_\BA^*\nabla_\BA\nabla_\BA\psi - \nabla_\BA\nabla_\BA^*\nabla_\BA\psi$ is
\begin{align*}
-\sum_{i}\psi_{;jii} + \sum_i\psi_{;iij} &= -\sum_i(\pl_i\BF_{ij} + [\BA_i,\BF_{ij}])\psi - 2\sum_i\BF_{ij}\psi_{;i} ~.
\end{align*}
This proves (\ref{eqn_Weitzenbock_01}) for flat metric.

%%%%%%%%%%%%%%%%%%
\subsection{Adapted coordinate and transverse-Reeb exponential gauge}\label{apx_adapted_chart}
The purpose of this subsection is to derive the local expression of the Dirac equation on the adapted coordinate chart.  Suppose that $a$ is a contact form on $Y$, and $\dd\mr{s}^2$ is an adapted metric.  Denote the Reeb vector field by $v$, and the Levi-Civita connection of $\dd\mr{s}^2$ by $\nabla$.

Fix a point $p\in Y$.  The construction of the adapted chart starts with two oriented, orthonormal vectors $e_1$ and $e_2$ for $\ker(a)|_p$.  The choice of $e_1$ and $e_2$ is not unique; there is a freedom of ${\rm SO}(2)\cong S^1$.  We will choose $e_1$ and $e_2$ to be the eigenvectors of a symmetric map defined from $\nabla v$.  This choice makes it easier to do the local computation.

%%%%%%%%%
\subsubsection{The choice of the frame}
Consider the map $\CN$ on $\ker(a)|_p$ defined by
\begin{align*}
\langle \CN(u_1), u_2 \rangle &= \langle \nabla_{u_1}v, J(u_2)  \rangle
\end{align*}
for any $u_1,u_2\in\ker(a)|_p$.  The pairing is the $\dd\mr{s}^2$ inner product, and $J$ is the rotation operator on $\ker(a)$ defined by $\dd a$ and $\dd\mr{s}^2$.

Let $\fe_1$ be a unit-normed vector on $\ker(a)|_p$, and let $\fe_2 = J(\fe_1)$.  It follows from $\dd*a=0$ that
\begin{align*}
\langle\nabla_{\fe_1}v,\fe_1\rangle + \langle\nabla_{\fe_2}v,\fe_2\rangle = 0 ~.
\end{align*}
It implies that $\CN$ is a symmetric operator.  Choose $e_1$ to be one of the unit-normed eigenvector of $\CN$, and denote its eigenvalue by $1+N$.  Namely,
\begin{align}\label{eqn_apx_N1}
N = \langle\CN(e_1) - e_1 , e_1 \rangle ~.
\end{align}
Another vector $e_2$ is taken to be $J(e_1)$.  By contracting $(e_1,e_2)$ with $\dd a = 2* a$, we find that
\begin{align*}
\langle\nabla_{e_1}v,e_2\rangle - \langle\nabla_{e_2}v,e_1\rangle = 2 ~.
\end{align*}
Equivalently, the trace of $\CN$ is $2$.  Thus,
\begin{align}\label{eqn_apx_N2}
-N = \langle\CN(e_2) - e_2 , e_2 \rangle ~.
\end{align}

%%%%%%%%%
\subsubsection{The adapted coordinate}
With $e_1$ and $e_2$ chosen, consider the adapted coordinate centered at $p\in Y$:
\begin{align*}  \begin{array}{cccl}
&C\times I &\to & Y \\
\varphi_0: &((x,y),0) &\mapsto & \exp_{p}(x e_1+y e_2) ~, \\
\varphi: &((x,y),z) &\mapsto & \exp_{\varphi_0(x,y)}(zv) ~.
\end{array}  \end{align*}
It follows from the construction that $\varphi(x,y,\,\cdot\,)$ is a integral curve of the Reeb vector field for any $x$ and $y$.  Therefore, the Reeb vector field $v = \pl_z$.  By (\ref{eqn_apx_N1}) and (\ref{eqn_apx_N2}), its covariant derivative at $p$ is
\begin{align}\label{eqn_apx_01}
(\nabla_{e_1}v)|_p &= (1+N) e_2 ~, &(\nabla_{e_2}v)|_p &= (-1+N) e_1 ~.
\end{align}
It follows from $\dd a = 2*a$ that $\nabla_v v$ vanishes identically.

Since $a(v) = 1$ and $\dd a(v, \,\cdot\,) = 0$, the contact form and its exterior derivative must be
\begin{align}\label{eqn_apx_00}\begin{split}
a &= \dd z + 2a_1(x,y)\dd x + 2a_2(x,y)\dd y ~,\\
\dd a &= 2(\pl_x a_2(x,y) - \pl_y a_1(x,y))\dd x\wedge\dd y ~.
\end{split}\end{align}
And the volume form is $\oh a\wedge\dd a = B(x,y)\,\dd x\wedge\dd y\wedge\dd z$, where $B(x,y) = \pl_x a_2 - \pl_y a_1$.

To proceed, consider the following frame:  parallel transport $\{e_1,e_2,v\}$ along radial geodesics on $C_0$, and then parallel transport along the Reeb chords.  It ends up with an orthonormal frame on $C\times I$, which will be denoted by $\{u_1,u_2,u_3\}$.  We are going to find the transition between $\{u_1,u_2,u_3\}$ and $\{\pl_z,\pl_y,\pl_z\}$.

%%%%%%%%%
\subsubsection{The Reeb vector field}
To express $\pl_z$ in terms of $\{u_1,u_2,u_3\}$, note that both $\pl_z = e_3$ and $u_j$ are parallel along the integral curves of $v$.  Therefore, $\langle e_3,u_j\rangle$ is independent of $z$, and it suffices to compute these coefficients on $C_0$.  For any $(x,y)\in C$, consider the radial geodesic $\varphi_0(tx,ty)$.  Let $e_3|_{(tx,ty,0)} = \sum_j h_3^j(t)u_j$, then $\frac{\dd^k}{\dd t^k}h_3^j(t) = \langle(\nabla^{k}e_3)(\pl_t,\cdots,\pl_t),u_j\rangle$.  The Taylor's theorem and (\ref{eqn_apx_01}) imply that
\begin{align}\label{eqn_apx_02}
\pl_z &= u_3 + y(-1+N)u_1 + x(1+N)u_2 + \CO(\rho_0^2)u_j
\end{align}
where $\rho_0 = (x^2+y^2)^\oh$.

%%%%%%%%%
\subsubsection{The vector fields $\pl_x$ and $\pl_y$ on the zero slice}
Fix $(x,y)\in C$, and let $\gamma(t,s) = \varphi_0(t(x+s),ty)$.  Denote the variational field $\frac{\pl}{\pl s}|_{s=0}\gamma(t,s)$ by $V(t)$.  It follows from the construction that $V(1) = \pl_x|_{(x,y,0)}$.  Since $V(t)$ is a variational field of geodesics, it obeys the Jacobi field equation.  With the initial condition $V(0) = 0$ and $V'(0) = e_1$, it follows from the Jacobi equation that
\begin{align}\label{eqn_apx_03}
\pl_x|_{(x,y,0)} &= u_1 + \CO(\rho_0^2)u_j ~.
\end{align}
Similarly,
$$   \pl_y|_{(x,y,0)} = u_2 + \CO(\rho_0^2)u_j ~.   $$
The Jacobi field equation can be used to find all the higher order coefficients, see \cite[chapter 1]{ref_Cheeger_Ebin}.

%%%%%%%%%
\subsubsection{The vector fields $\pl_x$ and $\pl_y$ on $C\times I$}
Fix $((x,y),z)\in C\times I$, and let $\tilde{\gamma}(t,s) = \varphi(x+s,y,tz)$.  The variational field $\tilde{V}(t) = \frac{\pl}{\pl s}|_{s=0}\tilde{\gamma}(t,s)$ is again a Jacobi field.  It follows from the construction that $\tilde{V}(1) = \pl_x|_{(x,y,z)}$.  By (\ref{eqn_apx_03}), the initial value is
\begin{align}\label{eqn_apx_04}
\tilde{V}(0) &= \pl_x|_{(x,y,0)} = u_1 + \CO(\rho_0^2)u_j ~.
\end{align}
By (\ref{eqn_apx_01}), the initial velocity is
\begin{align}\label{eqn_apx_05}\begin{split}
\tilde{V}'(0) &= (\nabla_{\pl_t} \tilde{J}(t))|_{t=0} = (\nabla_{\tilde{J}(0)}\pl_t) = (\nabla_{\pl_x}ze_3)|_{(x,y,0)} \\
&= z(1+N)u_2 + \CO(\rho_0^2)u_j ~.
\end{split}\end{align}
It follows from the Taylor's theorem and the Jacobi field equation that
\begin{align}\label{eqn_apx_06}
\pl_x &= u_1 + z(1+N)u_2 + \CO(\rho^2)u_j
\end{align}
where $\rho = (x^2+y^2+z^2)^\oh$.  Similarly,
$$   \pl_y = u_2 + z(-1+N)u_1 + \CO(\rho^2)u_j ~.   $$

%%%%%%%%%
\subsubsection{The contact form}
The expansion of $\pl_x$ and $\pl_y$ can be used to find out the expansion of $a_1(x,y)$ and $a_2(x,y)$ in (\ref{eqn_apx_00}).  The following vector fields are annihilated by $a$:
\begin{align*}
\pl_x - \langle\pl_x,\pl_z\rangle\pl_z &= \pl_x - \big(y(-1+N) + \CO(\rho^2)\big)\pl_z ~, \\
\pl_y - \langle\pl_y,\pl_z\rangle\pl_z &= \pl_y - \big(x(1+N) + \CO(\rho^2)\big)\pl_z ~.
\end{align*}
Thus, $a = \dd z + (y(-1+N) + \CO(\rho_0^2))\dd x + (x(1+N) + \CO(\rho_0^2))\dd y$.

The coefficient of volume element $B(x,y)$ is the determinant of the coefficients of $\{\pl_x,\pl_y,\pl_z\}$ in $\{u_1,u_2,u_3\}$.  By (\ref{eqn_apx_02}) and (\ref{eqn_apx_03}), $B(x,y) = 1 + \CO(\rho_0^2)$.

%%%%%%%%%
\subsubsection{Trivialization of $K^{-1}$}
Note that $u_1$ and $u_2$ do not necessarily belong to $\ker(a)$.  To trivialize the bundle $K^{-1}$, perform the Gram--Schmidt process on $\{v,u_1,u_2\}$.  Denote the output by $\{v,e_1,e_2\}$.  A direct computation shows that
\begin{align}\label{eqn_apx_07}\left\{\begin{aligned}
e_1 &= \pl_x - y(-1+N)\pl_z + \CO(\rho^2)\pl_j ~, \\
e_2 &= \pl_y - x(1+N)\pl_z - 2xN\pl_x + \CO(\rho^2)\pl_j ~.
\end{aligned}\right.\end{align}
It is clear that the $e_1$ and $e_2$ coincide with the initial choice at $p$.  The unitary frame $\frac{1}{\sqrt{2}}(e_1-ie_2)$ trivialize the bundle $K^{-1}$ on the adapted chart.

Let $\{\omega^1,\omega^2,\omega^3 = a\}$ be the dual coframe of $\{e_1,e_2,v\}$.  It follows that
\begin{align}\label{eqn_apx_08}\left\{\begin{aligned}
\omega^1 &= \dd x + 2zN\dd y + \CO(\rho^2)\dd x^j ~, \\
\omega^2 &= \dd y + \CO(\rho^2)\dd x^j ~.
\end{aligned}\right.\end{align}
Let $\theta_i^j$ be the Levi-Civita connection in terms of this frame, i.e.\ $\nabla e_i = \sum_j\theta_i^j e_j$.  By \cite[(2.4)]{ref_Ts1}, only $\theta_1^2$ appears in the canonical Dirac operator, and a direct computation shows that
$$   \theta_1^2 = (1+N)\omega^3 + \CO(\rho)\omega^j ~.   $$

%%%%%%%%%
\subsubsection{The base connection}
There is a standard technique to write down the local expression of $A_E$ in terms of the (transverse--Reeb) exponential gauge.  It is a variant of the original argument of Uhlenbeck \cite{ref_Uhlenbeck_YM}, and the detail will be omitted.

In the transverse-Reeb exponential gauge, the unitary connection $A_E$ is equal to
\begin{align}\label{eqn_apx_09}
A_E &= (-\oh y F_{12}(p) - z F_{13}(p) + \CO(\rho^2))\omega^1 + (\oh x F_{12}(p) - z F_{23}(p) + \CO(\rho^2))\omega^2
\end{align}
where $F_{A_E}(p) = \sum_{i<j}F_{ij}(p)\,\omega^i\wedge\omega^j$.  Note that there is no $\omega^3$-component in this gauge.

%%%%%%%%%
\subsubsection{The Dirac operator}
With the above discussions, the two components of the Dirac operator $\mr{D}_r$ on $\mr{\psi}=(\mr{\alpha},\mr{\beta})$ are
\begin{align}\label{eqn_apx_10}\left\{\begin{aligned}
\pr_1(\mr{D}_r\mr{\psi}) &= \frac{r}{2}\mr{\alpha} + i\pl_z\mr{\alpha} \\
&\quad - 2\pl_\xi\mr{\beta} - i(\bar{\xi} + N\xi)\pl_z\mr{\beta} - 2izN\pl_x\mr{\beta} + \CO(\rho^2)\pl_j\mr{\beta} + \CO(\rho)\mr{\beta} ~,\\
\pr_2(\mr{D}_r\mr{\psi}) &= 2\pl_{\bar{\xi}}\mr{\alpha} - i(\xi + N\bar{\xi})\pl_z\mr{\alpha} - 2izN\pl_x\mr{\alpha} + \CO(\rho^2)\pl_j\mr{\alpha} + \CO(\rho)\mr{\alpha} \\
&\quad - (\frac{r}{2}+1-N)\mr{\beta} - i\pl_z\mr{\beta} + \CO(\rho)\mr{\beta}
\end{aligned}\right.\end{align}
where $\xi$ is the complex coordinate $x+iy$.  This supplies the detail for \S\ref{subsec_adapted_chart} and \S\ref{sec_adapted_Dirac}.

%%%%%%%%%
\subsubsection{Change of gauge}
In (\ref{eqn_apx_10}), the $r$-factors appear in the diagonal.  It is also useful to put the $r$-factor in the off-diagonal term.  Consider the following change of gauge:
\begin{align*}
\mb{\alpha} &= \exp(\frac{i}{2}r(z+Nxy))\mr{\alpha} &\text{and}& &\mb{\beta} = \exp(\frac{i}{2}r(z+Nxy))\mr{\beta} ~.
\end{align*}
With respect to this gauge, (\ref{eqn_apx_10}) is transformed into the equation in \S\ref{sec_local_Dr}.

%%%%%%%%%%%%%%%%%%%%%%%%%%%%%%%%%%%%
\begin{bibdiv}
\begin{biblist}

\bib{ref_APS1}{article}{
   author={Atiyah, M. F.},
   author={Patodi, V. K.},
   author={Singer, I. M.},
   title={Spectral asymmetry and Riemannian geometry. I},
   journal={Math. Proc. Cambridge Philos. Soc.},
   volume={77},
   date={1975},
   pages={43--69},
}

\bib{ref_APS2}{article}{
   author={Atiyah, M. F.},
   author={Patodi, V. K.},
   author={Singer, I. M.},
   title={Spectral asymmetry and Riemannian geometry. II},
   journal={Math. Proc. Cambridge Philos. Soc.},
   volume={78},
   date={1975},
   number={3},
   pages={405--432},
}

\bib{ref_APS3}{article}{
   author={Atiyah, M. F.},
   author={Patodi, V. K.},
   author={Singer, I. M.},
   title={Spectral asymmetry and Riemannian geometry. III},
   journal={Math. Proc. Cambridge Philos. Soc.},
   volume={79},
   date={1976},
   number={1},
   pages={71--99},
}

\bib{ref_BGV}{book}{
   author={Berline, Nicole},
   author={Getzler, Ezra},
   author={Vergne, Mich{\`e}le},
   title={Heat kernels and Dirac operators},
   series={Grundlehren Text Editions},
   note={Corrected reprint of the 1992 original},
   publisher={Springer-Verlag},
   place={Berlin},
   date={2004},
   pages={x+363},
}

\bib{ref_Cheeger_Ebin}{book}{
   author={Cheeger, Jeff},
   author={Ebin, David G.},
   title={Comparison theorems in Riemannian geometry},
   note={North-Holland Mathematical Library, Vol. 9},
   publisher={North-Holland Publishing Co.},
   place={Amsterdam},
   date={1975},
   pages={viii+174},
}

\bib{ref_CH}{article}{
   author={Chern, S. S.},
   author={Hamilton, R. S.},
   title={On Riemannian metrics adapted to three-dimensional contact
   manifolds},
   note={With an appendix by Alan Weinstein},
   conference={
      title={Workshop Bonn 1984},
      address={Bonn},
      date={1984},
   },
   book={
      series={Lecture Notes in Math.},
      volume={1111},
      publisher={Springer},
      place={Berlin},
   },
   date={1985},
   pages={279--308},
}

\bib{ref_Hitchin}{article}{
   author={Hitchin, Nigel},
   title={Harmonic spinors},
   journal={Advances in Math.},
   volume={14},
   date={1974},
   pages={1--55},
}

\bib{ref_Hutchings}{article}{
   author={Hutchings, Michael},
   label={Hs},
   title={Taubes's proof of the Weinstein conjecture in dimension three},
   journal={Bull. Amer. Math. Soc. (N.S.)},
   volume={47},
   date={2010},
   number={1},
   pages={73--125},
}

\bib{ref_Kato}{book}{
   author={Kato, Tosio},
   title={Perturbation theory for linear operators},
   series={Classics in Mathematics},
   note={Reprint of the 1980 edition},
   publisher={Springer-Verlag},
   place={Berlin},
   date={1995},
   pages={xxii+619},
}

\bib{ref_LM}{book}{
   author={Lawson, H. Blaine, Jr.},
   author={Michelsohn, Marie-Louise},
   title={Spin geometry},
   series={Princeton Mathematical Series},
   volume={38},
   publisher={Princeton University Press},
   place={Princeton, NJ},
   date={1989},
   pages={xii+427},
}

\bib{ref_Morgan}{book}{
   author={Morgan, John W.},
   title={The Seiberg-Witten equations and applications to the topology of
   smooth four-manifolds},
   series={Mathematical Notes},
   volume={44},
   publisher={Princeton University Press},
   place={Princeton, NJ},
   date={1996},
   pages={viii+128},
}

\bib{ref_Nicolaescu_eta}{article}{
   author={Nicolaescu, Liviu I.},
   title={Eta invariants of Dirac operators on circle bundles over Riemann
   surfaces and virtual dimensions of finite energy Seiberg-Witten moduli
   spaces},
   journal={Israel J. Math.},
   volume={114},
   date={1999},
   pages={61--123},
}

\bib{ref_Rosenberg}{article}{
   author={Rosenberg, Steven},
   title={The determinant of a conformally covariant operator},
   journal={J. London Math. Soc. (2)},
   volume={36},
   date={1987},
   number={3},
   pages={553--568},
}

\bib{ref_Taubes_SW_Weinstein}{article}{
   author={Taubes, Clifford Henry},
   title={The Seiberg-Witten equations and the Weinstein conjecture},
   journal={Geom. Topol.},
   volume={11},
   date={2007},
   pages={2117--2202},
}

\bib{ref_Taubes_sf}{article}{
   author={Taubes, Clifford Henry},
   title={Asymptotic spectral flow for Dirac operators},
   journal={Comm. Anal. Geom.},
   volume={15},
   date={2007},
   number={3},
   pages={569--587},
}

\bib{ref_Ts1}{article}{
   author={Tsai, Chung-Jun},
   label={Ts}
   title={Asymptotic spectral flow for Dirac operators of disjoint Dehn twists},
   eprint={arXiv:1104.5000},
   url={http://arxiv.org/abs/1104.5000},
   status={to appear in Asian J. Math.},
}

\bib{ref_Ts3}{article}{
   author={Tsai, Chung-Jun},
   title={Dirac spectral flow on contact three manifolds II: Thurston--Winkelnkemper contact forms},
   label={Ts2}
   %eprint={arXiv:1104.5000},
   %url={http://arxiv.org/abs/1104.5000},
   status={preprint},
}

\bib{ref_Uhlenbeck_YM}{article}{
   author={Uhlenbeck, Karen K.},
   title={Removable singularities in Yang-Mills fields},
   journal={Comm. Math. Phys.},
   volume={83},
   date={1982},
   number={1},
   pages={11--29},
}

\end{biblist}
\end{bibdiv}

\end{document}